\definecolor{violet}{rgb}{0.0,0.2,0.7}
\definecolor{rouge2}{rgb}{0.8,0.0,0.2}
\theoremstyle{plain}
    \newtheorem{thm}{Theorem}[section]
    \newtheorem{lem}[thm]{Lemma}
    \newtheorem{prop}[thm]{Proposition}
    \newtheorem{setupnota}[thm]{Setup and Notations}
    \newtheorem{cor}[thm]{Corollary}
    \newtheorem*{claim*}{Claim}
    \newtheorem{claim}[thm]{Claim}
\theoremstyle{plain}
    \newtheorem{bigthm}{Theorem}
    \newtheorem*{mainthm*}{Main Theorem}
    \newenvironment{taggedbigthm}[1]
    {\taggedbigthmx}
    {\endtaggedbigthmx}
    \newtheorem*{bigrmk*}{Remark}
\theoremstyle{definition}
    \newtheorem{defn}[thm]{Definition}
    \newtheorem*{defn*}{Definition}
    \newtheorem*{conj*}{Conjecture}
    \newtheorem*{ack*}{Acknowledgements}
\theoremstyle{remark}
    \newtheorem*{rmk*}{Remark}
    \newtheorem*{ques*}{Question}
    \newtheorem*{ans*}{Answer}
\numberwithin{equation}{section}
\newlist{steps}{enumerate}{1}
\setlist[steps, 1]{label = Step \arabic*:}
\DeclareMathSymbol{\lsb@l}{\mathalpha}{letters}{`l}
\DeclareFontFamily{U}{MnSymbolC}{}
\DeclareSymbolFont{MnSyC}{U}{MnSymbolC}{m}{n}
\DeclareFontShape{U}{MnSymbolC}{m}{n}{
	<-6>  MnSymbolC5
	<6-7>  MnSymbolC6
	<7-8>  MnSymbolC7
	<8-9>  MnSymbolC8
	<9-10> MnSymbolC9
	<10-12> MnSymbolC10
	<12->   MnSymbolC12}{}
\DeclareMathSymbol{\intprod}{\mathbin}{MnSyC}{'270}
\DeclareMathOperator{\Id}{Id}
\DeclareMathOperator{\Hom}{Hom}
\DeclareMathOperator{\im}{im}
\DeclareMathOperator{\tr}{tr}
\DeclareMathOperator{\Tr}{Tr}
\DeclareMathOperator{\pr}{pr}
\DeclareMathOperator{\Vol}{Vol}
\DeclareMathOperator{\supp}{supp}
\DeclareMathOperator{\dist}{dist}
\DeclareMathOperator{\SH}{SH}
\DeclareMathOperator{\Har}{H}
\DeclareMathOperator{\PSH}{PSH}
\DeclareMathOperator{\Exc}{Exc}
\DeclareMathOperator{\rk}{rank}
\DeclareMathOperator{\HE}{HE}
\DeclareMathOperator*{\esssup}{ess\,sup}
\DeclareMathOperator*{\esslimsup}{ess\,lim\,sup}
\DeclareMathOperator{\Tor}{Tor}
\DeclareMathOperator{\End}{End}
\DeclareMathOperator{\Sing}{Sing}
\DeclareMathOperator{\BCC}{c_1^{BC}}
\DeclareMathOperator{\Herm}{Herm}
\DeclareMathOperator{\Gr}{Gr}
\def\1{\mathds{1}}
\newcommand{\ii}{\mathrm{i}}
\newcommand{\loc}{\mathrm{loc}}
\newcommand{\wOm}{{\widetilde{\Omega}}}
\newcommand{\wom}{{\widetilde{\omega}}}
\newcommand{\homg}{{\widehat{\omega}}}
\newcommand{\hV}{{\widehat{V}}}
\newcommand{\hX}{{\widehat{X}}}
\newcommand\sm{\sigma}
\newcommand\dt{\delta}
\newcommand\vep{\varepsilon}
\newcommand\vph{\varphi}
\newcommand\om{\omega}
\newcommand\ta{\theta}
\newcommand\gm{\gamma}
\newcommand\af{\alpha}
\newcommand\bt{\beta}
\newcommand\ld{\lambda}
\newcommand\Dt{\Delta}
\newcommand\Om{\Omega}
\newcommand\Gm{\Gamma}
\newcommand\Ta{\Theta}
\newcommand\RA{\mathrm{A}}
\newcommand\RBC{\mathrm{BC}}
\newcommand\RG{\mathrm{G}}
\newcommand\BN{\mathbb{N}}
\newcommand\BR{\mathbb{R}}
\newcommand\BC{\mathbb{C}}
\newcommand\BB{\mathbb{B}}
\newcommand\BD{\mathbb{D}}
\newcommand\CA{\mathcal{A}}
\newcommand\CC{\mathcal{C}}
\newcommand\CE{\mathcal{E}}
\newcommand\CF{\mathcal{F}}
\newcommand\CG{\mathcal{G}}
\newcommand\CH{\mathcal{H}}
\newcommand\CK{\mathcal{K}}
\newcommand\CO{\mathcal{O}}
\newcommand\CS{\mathcal{S}}
\newcommand\CU{\mathcal{U}}
\newcommand\lt{\left}
\newcommand\rt{\right}
\newcommand\ra{\rightarrow}
\newcommand\pl{\partial}
\newcommand\db{\bar{\partial}}
\newcommand\ddb{\partial \bar{\partial}}
\newcommand\dd{\mathrm{d}}
\newcommand\dc{\mathrm{d}^{\mathrm{c}}}
\newcommand\ddc{\mathrm{d}\mathrm{d}^{\mathrm{c}}}
\newcommand\norm[1]{\left\lVert {#1} \right\rVert}
\newcommand\abs[1]{\left\lvert {#1} \right\rvert}
\newcommand\w{\wedge}
\newcommand\reg{\mathrm{reg}}
\newcommand\sing{\mathrm{sing}}
\newcommand\set[2]{\left\{ {#1} \, \middle| \, {#2} \right\}}
\newcommand\iprod[2]{\left\langle {#1}, {#2} \right\rangle}
\newcommand{\RN}[1]{\textup{\uppercase\expandafter{\romannumeral#1}}}
\newsavebox{\@brx}
\newcommand{\llangle}[1][]{\savebox{\@brx}{\(\m@th{#1\langle}\)}%
  \mathopen{\copy\@brx\kern-0.5\wd\@brx\usebox{\@brx}}}
\newcommand{\rrangle}[1][]{\savebox{\@brx}{\(\m@th{#1\rangle}\)}%
  \mathclose{\copy\@brx\kern-0.5\wd\@brx\usebox{\@brx}}}
\title{Gauduchon metrics and Hermite--Einstein metrics on non-K\"ahler varieties}
\author{Chung-Ming Pan}
\address[Chung-Ming Pan]{Centre interuniversitaire de recherches en g\'eom\'etrie et topologies (CIRGET); Universit\'e du Qu\'ebec \`a Montr\'eal; Case postale 8888, Succursale centre-ville, Montr\'eal, Qu\'ebec, H3C 3P8, Canada}
\email{\href{mailto:pan.chung_ming@uqam.ca}{pan.chung\_ming@uqam.ca} \qquad\qquad\qquad\qquad\qquad\qquad\qquad\qquad\qquad\qquad\qquad\qquad\qquad\qquad}
\urladdr{\href{https://chungmingpan.github.io/}{https://chungmingpan.github.io/}}
\date{\today}
\subjclass{Primary: 53C55, 53C07, 32C15, Secondary: 14F06, 32W20, 35J08}
\keywords{Singular Gauduchon metrics, Singular Hermite--Einstein metrics, Stable reflexive sheaves, Uniform Sobolev inequality}
\begin{document} 

\maketitle
\begin{abstract} 
We show the existence of Gauduchon metrics on arbitrary compact hermitian varieties, generalizing our previous work on smoothable singularities. 
These metrics allow us to define the notion of slope stability for torsion-free coherent sheaves on compact normal varieties that are not necessarily K\"ahler. 
Then we prove the existence and uniqueness of singular Hermite--Einstein metrics for slope-stable reflexive sheaves on non-K\"ahler normal varieties. 
\end{abstract}

\tableofcontents

\section*{Introduction}
Constructing special metrics on complex manifolds is a central problem in complex geometry.
Yau's celebrated solution of the Calabi conjecture \cite{Yau_1978} and Donaldson--Uhlenbeck--Yau theorem \cite{Donaldson_1985, Uhlenbeck_Yau_1986, Donaldson_1987} on the existence of Hermite--Einstein metrics over stable vector bundles are landmarks in K\"ahler geometry. 

\smallskip
A great deal of attention has also been devoted to studying non-K\"aher geometry. 
In this context, an important question is how much of the theories on special metrics can be extended from K\"ahler to non-K\"ahler geometry. 
Gauduchon metrics are crucial in these investigations. 
A metric $\om_\RG$ on an $n$-dimensional ($n \geq 2$) complex manifold $X$ is called Gauduchon if $\ddc \om_\RG^{n-1} = 0$. 
A classical result by Gauduchon \cite{Gauduchon_1977} states that, on a compact complex manifold, every hermitian metric is conformally equivalent to a Gauduchon metric, and such a metric is moreover unique up to constant scaling. 
Li and Yau \cite{Li_Yau_1987} used Gauduchon metrics to define slope stability on non-K\"ahler manifolds and generalized the Donaldson--Uhlenbeck--Yau theorem to this setting. 
This plays an important role in classifying surfaces of class VII (see e.g. \cite{Teleman_2019} and references therein). 
On the other hand, motivated by Yau's theorem, Gauduchon \cite[IV.5]{Gauduchon_1984} posed a Calabi--Yau type question concerning Gauduchon metrics, later solved by Sz\'ekelyhidi, Tosatti, and Weinkove \cite{STW_2017}. 

\smallskip
Singular spaces frequently arise from classification theories of complex manifolds, such as Minimal Model Program and moduli theory. 
Therefore, studying canonical metrics on singular varieties has become more and more important in recent decades. 
Generalizing Yau's theorem, singular K\"ahler--Einstein metrics with non-positive curvature on mildly singular K\"ahler varieties have been constructed by Eyssidieux--Guedj--Zeriahi \cite{EGZ_2009}. 
On the other hand, Bando and Siu \cite{Bando_Siu_1994} extended Donaldson--Uhlenbeck--Yau theorem to stable reflexive sheaves over compact K\"ahler manifolds. 
This has recently been further generalized to normal K\"ahler spaces \cite{Chen_Wentworth_2024, Chen_2022, Paun_et_al_2023_HE_sing}. 

\smallskip
In non-K\"ahler geometry, exploring singular objects is also significant. 
Reid's fantasy \cite{Reid_1987} conjectures that Calabi--Yau threefolds should form a connected moduli space via certain geometric transitions introduced by Clemens and Friedman \cite{Clemens_1983, Friedman_1986}, which allows deformations and bimeromorphic transformations through non-K\"ahler Calabi--Yau varieties. 
This subject has garnered much interest (see \cite{Friedman_1991, Lu_Tian_1994, Chuan_2012, Fu_Li_Yau_2012, Collins_Picard_Yau_2024} and the references therein). 

\smallskip
It is therefore essential to understand the construction of Gauduchon metrics on singular non-K\"ahler varieties and to investigate their applications. 
In this article, we contribute to the following two directions: 
\begin{itemize}
    \item {\bf Singular Gauduchon metrics:} 
    We prove that on any given compact hermitian variety, there exists a conformally equivalent singular Gauduchon metric, which is moreover unique up to constant scaling. 
    \item {\bf Singular Hermite--Einstein metrics:} 
    Using singular Gauduchon metrics, one can make sense of slope stability for torsion-free coherent sheaves on arbitrary compact normal varieties, not necessarily K\"ahler. 
    Given a stable reflexive sheaf on such a variety, we establish the existence and uniqueness of a singular Hermite--Einstein metrics. 
\end{itemize}

\subsection*{Singular Gauduchon metrics}
In \cite[Thm.~A]{Pan_2022}, the author has shown the existence of Gauduchon metrics on hermitian varieties that are {\it smoothable}.
The first aim of this article is to get rid of this extra assumption and prove the singular version of Gauduchon's theorem in full generality. 

\begin{bigthm}
Let $(X,\om)$ be an $n$-dimensional compact hermitian variety.
There exists a Gauduchon metric $\om_\RG$ that is conformal and quasi-isometric to $\om$.
Moreover, it is unique up to constant scaling in the conformal and quasi-isometric class of $\om$. 
\end{bigthm}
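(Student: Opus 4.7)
The plan is to reduce the problem to the classical Gauduchon theorem on a desingularization and pass to the limit, with the crucial technical input being a Sobolev inequality on the resolution whose constant is controlled by the geometry of $(X,\omega)$ only.

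Take a log-resolution $\pi\colon \hat X \to X$ with exceptional divisor $E$, fix an auxiliary smooth hermitian metric $\theta$ on $\hat X$, and consider the family of smooth hermitian metrics
\[
\hat\omega_\epsilon = \pi^*\omega + \epsilon\,\theta, \qquad \epsilon \in (0,1].
\]
Writing the Gauduchon equation in the form $\partial\bar\partial(f\,\hat\omega_\epsilon^{n-1}) = (L_\epsilon f)\,\hat\omega_\epsilon^n$, Gauduchon's classical theorem produces, for every $\epsilon$, a smooth positive solution $\hat f_\epsilon \in C^\infty(\hat X)$ of $L_\epsilon\hat f_\epsilon = 0$, unique up to scaling; I normalize by $\int_{\hat X}\hat f_\epsilon\,\hat\omega_\epsilon^n = \int_X \omega^n$.

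The heart of the argument is a pair of uniform bounds $C^{-1}\leq \hat f_\epsilon \leq C$ with $C$ independent of $\epsilon$. For the upper bound I would apply Moser iteration to $L_\epsilon\hat f_\epsilon = 0$; for this to produce $\epsilon$-independent constants, the essential ingredient is a Sobolev inequality for $(\hat X,\hat\omega_\epsilon)$ whose constant depends only on $(X,\omega)$ — precisely the uniform Sobolev inequality on singular hermitian varieties flagged in the keywords. The lower bound then follows by running the same iteration on $1/\hat f_\epsilon$, which satisfies a structurally similar equation. This is the main obstacle: the metrics $\hat\omega_\epsilon$ degenerate transverse to $E$ as $\epsilon\to 0$ and naive Sobolev constants blow up, so the point is to exploit the ambient geometry of the singular base $X$ rather than the degenerate geometry of the resolution.

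Granted these bounds, a subsequence $\hat f_{\epsilon_k}$ converges smoothly on compact subsets of $\hat X\setminus E$, by interior Schauder theory, to a bounded positive limit $\hat f$; because $\pi$ is biholomorphic on $\hat X\setminus E$, push-forward yields a bounded function $f$ on $X_{\reg}$ with $C^{-1}\leq f\leq C$, and setting $\omega_\RG = f^{1/(n-1)}\omega$ gives a metric conformal and quasi-isometric to $\omega$. The identity $\partial\bar\partial(f\,\omega^{n-1}) = 0$ is verified as a current on $X$ by testing against smooth forms on $X$, pulling back to $\hat X$, and passing to the limit using the uniform $L^\infty$ bounds and dominated convergence.

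For uniqueness, given two bounded Gauduchon representatives $f_1\omega$ and $f_2\omega$ in the conformal and quasi-isometric class, each satisfies $\partial\bar\partial(f_i\omega^{n-1})=0$; hence for every $\lambda\in\BR$ the difference $f_1-\lambda f_2$ solves the same linear equation. Choosing $\lambda=\inf_{X_{\reg}}(f_1/f_2)$ produces a non-negative bounded solution with infimum zero, and a Harnack-type strong maximum principle on the connected regular locus $X_{\reg}$ — obtained from the same elliptic framework developed for the existence part — forces $f_1\equiv \lambda f_2$.
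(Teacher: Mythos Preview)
Your proposal follows essentially the same route as the paper: resolve, form $\hat\omega_\epsilon = \pi^*\omega + \epsilon\,\theta$, apply classical Gauduchon at each level, run Moser iteration with an $\epsilon$-uniform Sobolev inequality to get $\epsilon$-independent bounds on the conformal factor, and extract a $\CC^\infty_{\loc}$ limit on the complement of the exceptional set. The only notable difference is bookkeeping: the paper normalizes by $\inf_{\hat X}\rho_\epsilon = 1$, so the lower bound is automatic and your iteration of $1/\hat f_\epsilon$ is not needed (and note that $1/\hat f_\epsilon$ does not literally satisfy a linear equation --- a quadratic gradient term appears --- though the Harnack idea behind your claim is sound); in exchange the paper also requires a uniform Poincar\'e inequality, which it proves alongside the Sobolev inequality and which you do not mention.
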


Specifically, the metric $\om_\RG$ is constructed via an approximation process as follows:
due to Hironaka's theorem \cite{Hironaka_1964}, there is $\pi: \hX \to X$ a log resolution of singularities consisting of finitely many blowups of smooth centers, and the map $\pi$ is an isomorphism away from the singular points of $X$. 
Denote by $D$ the exceptional divisor of $\pi$.
One can find a closed $(1,1)$-form $\ta$ such that $\homg := \pi^\ast \om + \ta$ is a hermitian metric on $\hX$. 
Consider a family of hermitian metrics $\om_\vep := \pi^\ast \om + \vep \homg$ on $\hX$ for $\vep \in (0,1]$. 
By Gauduchon's theorem \cite{Gauduchon_1977}, there exists a unique smooth positive function $\rho_\vep$ that satisfies $\ddc (\rho_\vep \om_\vep^{n-1}) = 0$ and $\inf_\hX \rho_\vep = 1$. 
Then we establish the following uniform $L^\infty$-estimate and convergence: 

\begin{taggedbigthm}{A'}[cf. Theorem~\ref{thm:existence_bdd_Gauduchon}]\label{bigthm:approx_and_conv}
Under the above setup, the following properties hold:
\begin{enumerate}
    \item There is a uniform constant $C_G > 0$ independent of $\vep \in (0,1]$ such that $\sup_\hX \rho_\vep \leq C_G$;
    \item As $\vep \to 0$, one can extract a subsequence $\rho_{\vep_k}$ converging in $\CC^\infty_{\loc}(\hX \setminus D)$ towards a function $\rho \in \CC^\infty_\loc(\hX \setminus D) \cap L^\infty(\hX \setminus D)$ which is also bounded away from zero.
\end{enumerate}
\end{taggedbigthm}

In particular, since $\pi: \hX \setminus D \to X^\reg$ is an isomorphism, $\rho$ can be identified as a smooth function $X^\reg$, which we still denote by $\rho$. 
Then $\om_\RG := \rho^{\frac{1}{n-1}} \om$ is a bounded Gauduchon metric on $X^\reg$. 
A bounded Gauduchon metric refers to a Gauduchon metric $\om_\RG$ on $X^\reg$, which is conformally equivalent and quasi-isometric to $\om$ on $X^\reg$. 
Furthermore, $\rho$ is the unique bounded solution to $\ddc(\rho \om^{n-1}) = 0$ on $X^\reg$ up to scaling. 

\smallskip
A difficulty in proving the above result lies in establishing a uniform Sobolev inequality for $(\om_\vep)_{\vep \in (0,1]}$. 
Guo--Phong--Song--Sturm have established uniform Sobolev inequalities for K\"ahler metrics following a series of development \cite{Guo_Phong_Sturm_2024, Guo_Phong_Song_Sturm_2024_diameter, Guo_Phong_Song_Sturm_2023_Sobolev}. 
However, in the hermitian setting, their techniques cannot be directly applied, and additionally, the formulation of Green’s functions for the complex Laplacian is much more complicated than in the K\"ahler case. 

\smallskip
We use a localizing and patching method to establish a uniform Sobolev inequality for $(\om_\vep)_{\vep \in (0,1]}$. 

\begin{bigthm}[cf. Theorem~\ref{thm:unif_Sobolev_Poincare}]\label{bigthm:unif_Sobolev}
Let $(X,\om)$ be an $n$-dimensional compact hermitian variety and let $\pi: \hX \to X$ be a log resolution of singularities. 
Fix a hermitian metric $\homg$ on $\hX$ and consider an $\vep$-family of hermitian metrics $\om_\vep := \pi^\ast \om + \vep \homg$ for $\vep \in (0,1]$. 
Then for any $q \in (1, \frac{n}{n-1})$, there exists a constant $C_S > 0$ such that for any $\vep \in (0,1]$,
\[
    \forall f \in L^2_1(\hX, \om_\vep),\quad
    \lt(\int_\hX |f|^{2q} \om_\vep^n\rt)^{1/q}
    \leq C_S \lt(\int_\hX |\dd f|_{\om_\vep}^2 \om_\vep^n + \int_\hX |f|^2 \om_\vep^n\rt).
\]
\end{bigthm}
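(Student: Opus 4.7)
The plan is to reduce the Sobolev estimate on $(\hX,\om_\vep)$ to the Michael--Simon--Sobolev inequality on a complex (hence minimal) submanifold of Euclidean space, via an $\vep$-twisted embedding built from a finite open cover of $\hX$ adapted to the degeneracy locus $D$.

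First, I would fix a finite open cover $\{U_\alpha\}$ of $\hX$ such that for each $\alpha$ there is a holomorphic map $F_\alpha: U_\alpha \to \BC^{N_\alpha}$ with $F_\alpha^* \om_{\mrm{std}}$ uniformly equivalent to $(\pi^*\om)|_{U_\alpha}$, together with a smooth holomorphic chart $\Phi_\alpha: U_\alpha \to \BC^n$ in which $\homg$ is uniformly equivalent to $\Phi_\alpha^*\om_{\mrm{std}}$. For $U_\alpha$ meeting $D$, the map $F_\alpha$ is $\iota_\alpha \circ \pi$ where $\iota_\alpha: \pi(U_\alpha) \hookrightarrow \BC^{N_\alpha}$ is a local closed embedding furnished by the hermitian variety structure of $(X,\om)$, and by shrinking we may assume that the ambient extension of $\om$ is uniformly comparable to the Euclidean metric. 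The central device is the $\vep$-twisted map
\[
    G_\alpha: U_\alpha \lra \BC^{N_\alpha + n}, \qquad G_\alpha(\hat x) := (F_\alpha(\hat x), \sqrt{\vep}\, \Phi_\alpha(\hat x)),
\]
which is a holomorphic immersion satisfying $G_\alpha^*\om_{\mrm{std}} \sim \om_\vep|_{U_\alpha}$ uniformly in $\vep$. Its image is a complex, hence minimal, $n$-submanifold of $\BC^{N_\alpha + n}$.

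Second, I would choose cutoffs $\chi_\alpha$ subordinate to the cover so that, for $\alpha$ meeting $D$, $\chi_\alpha = F_\alpha^* \tilde\chi_\alpha$ is pulled back from a smooth compactly supported function $\tilde\chi_\alpha$ on $\BC^{N_\alpha}$. The crucial consequence is a uniform bound $|\dd \chi_\alpha|^2_{\om_\vep} \leq C$ independent of $\vep$: since $G_\alpha^*\om_{\mrm{std}}$ is the restriction of the ambient metric to the image, the pullback-contraction inequality $|G_\alpha^*\beta|^2_{G_\alpha^*\om_{\mrm{std}}} \leq |\beta|^2_{\om_{\mrm{std}}} \circ G_\alpha$ applied to $\beta = p_1^*\dd\tilde\chi_\alpha$, with $p_1: \BC^{N_\alpha+n} \to \BC^{N_\alpha}$ the first projection, yields $|\dd\chi_\alpha|^2_{\om_\vep} \leq C \|\dd\tilde\chi_\alpha\|_{L^\infty}^2$. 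This fine-tuning is essential: the only obvious estimate $|\dd\chi_\alpha|^2_{\om_\vep} \leq \vep^{-1} |\dd\chi_\alpha|^2_{\homg}$ blows up along $D$ and would ruin the patching.

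Third, the local Sobolev inequality on each $U_\alpha$ meeting $D$ is obtained by transporting to $G_\alpha(U_\alpha)$ and invoking the Michael--Simon inequality for minimal submanifolds of $\BC^{N_\alpha + n}$: for $u$ compactly supported in $U_\alpha$,
\[
    \lt(\int |u|^{\frac{2n}{n-1}} G_\alpha^*\om_{\mrm{std}}^n\rt)^{\frac{n-1}{n}} \leq C(n) \int |\dd u|^2_{G_\alpha^*\om_{\mrm{std}}} \, G_\alpha^*\om_{\mrm{std}}^n,
\]
with $C(n)$ depending only on $n$. Interpolating with $L^2$ yields the $L^{2q}$ version for any $q \in (1, n/(n-1))$; on pieces away from $D$ the standard compact Sobolev inequality with $\homg$ applies. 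Finally, applying these local estimates to $\chi_\alpha f$ and expanding $|\dd(\chi_\alpha f)|_{\om_\vep}^2 \leq 2\chi_\alpha^2 |\dd f|^2_{\om_\vep} + 2|\dd\chi_\alpha|^2_{\om_\vep} |f|^2$, the uniform cutoff bound absorbs the zeroth-order term into $\int |f|^2 \om_\vep^n$; summing over $\alpha$ delivers the desired global inequality with $C_S$ independent of $\vep$. The main obstacle is the simultaneous management of a degenerating metric and localizing cutoffs, precisely resolved by the pullback-cutoff construction together with the minimal-submanifold embedding.
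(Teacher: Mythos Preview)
Your localizing-and-patching strategy is in the same spirit as the paper's, and your insight that the cutoffs must be pulled back through $\pi$ to keep $|\dd\chi_\alpha|_{\om_\vep}$ uniformly bounded is exactly the right one. However, there is a genuine incompatibility in your construction. You require each $U_\alpha$ meeting $D$ to carry a holomorphic chart $\Phi_\alpha: U_\alpha \to \BC^n$, so $U_\alpha$ is biholomorphic to an open subset of $\BC^n$ and in particular contains no positive-dimensional compact analytic subset. At the same time you require $\chi_\alpha = F_\alpha^\ast\tilde\chi_\alpha$ with $F_\alpha = \iota_\alpha \circ \pi$, which forces $\chi_\alpha$ to be constant along each fiber of $\pi$. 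Over $X^\sing$ these fibers are compact and positive-dimensional (they are unions of components of $D$), so no chart $U_\alpha$ can contain an entire such fiber. Hence any $\chi_\alpha$ that is nonzero somewhere on such a fiber is nonzero on the whole fiber and therefore not supported in $U_\alpha$: a partition of unity cannot be simultaneously subordinate to a chart cover of $\hX$ and pulled back from $X$.

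This gap is not easily repaired within your framework. Replacing the pulled-back cutoffs by cutoffs honestly supported in charts makes $|\dd\chi_\alpha|_{\om_\vep}^2$ blow up like $(|u|^2+\vep)^{-1}$ in the fiber directions (with $u$ a local equation for $D$), and a model computation shows that $\|\dd\chi_\alpha\|_{L^{2p'}(\om_\vep)}$ stays bounded only for $p'$ strictly below $n$, whereas the H\"older/interpolation step in the patching needs $p' > q/(q-1) > n$ when $q < \frac{n}{n-1}$. Conversely, enlarging $U_\alpha$ to the $\pi$-saturated set $\pi^{-1}(V_\alpha)$ destroys the embedding step: such a set contains compact curves and hence admits no holomorphic embedding into any $\BC^M$, so Michael--Simon is unavailable. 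The paper resolves this tension by abandoning the minimal-submanifold route and proving the local Sobolev inequality directly on the saturated sets $\pi^{-1}(\Om_i)$ via Green's function estimates: these sets are shown to be ``K-domains'' (Ko{\l}odziej-type $L^\infty$-bounds hold for Dirichlet Monge--Amp\`ere problems there, inherited from the strongly pseudoconvex domain $\Om_i$ downstairs), and on K-domains uniform $L^\alpha$-bounds on Green's functions give the Sobolev inequality with no ambient Euclidean embedding required. This is precisely what allows the local estimate and the pulled-back cutoffs to live on the same open sets.
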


In the localizing process, we choose K\"ahler domains that cover the resolution $\hX$; specifically, they are taken as the preimages of pseudoconvex domains covering $X$. 
Then using cutoff functions, we glue the local uniform Sobolev inequalities obtained through Guo--Phong--Song--Sturm's scheme into a global one for $(\om_\vep)_{\vep \in (0,1]}$. 

\smallskip
In our strategy, one requires solving complex Monge--Amp\`ere equations on the domains. 
Since the K\"ahler domains we select can intersect exceptional divisors of the resolution of singularities and the divisors can cross the boundary of these domains, one must handle non-smooth solutions of local complex Monge--Amp\`ere equations even when the right-hand side density is smooth. 
Recent results of complex Monge--Amp\`ere equations on singular pseudoconvex domains \cite{Guedj_Guenancia_Zeriahi_2023, Pan_2023, GGZ_2024} play a crucial role in overcoming such a difficulty. 

\smallskip
We remark that in \cite[Sec.~6]{Guo_Phong_Song_Sturm_2023_Sobolev}, the authors also obtain local uniform Sobolev inequalities on domains in a compact K\"ahler manifold $X$. 
They however require the global K\"ahler condition in the proof for defining Green's functions and using the estimates obtained from the global approach. 
This is not possible in our situation. 

\subsection*{Singular Hermite--Einstein metrics}
We then apply the existence of bounded Gauduchon metrics to study Hermite--Einstein metrics of stable reflexive sheaves on non-K\"ahler varieties.

\smallskip 
Recall that in the K\"ahler setting, Bando and Siu \cite{Bando_Siu_1994} extended Donaldson--Uhlenbeck--Yau theorem to stable reflexive sheaves over compact K\"ahler manifolds. 
This has been recently generalized to normal K\"ahler spaces \cite{Chen_Wentworth_2024, Chen_2022, Paun_et_al_2023_HE_sing}. 
Additionally, recent works have explored variational approaches in the smooth setting \cite{Hashimoto_Keller_2021, Jonsson_McCleerey_Shivaprasad_2022, Hashimoto_Keller_2023}. 

\smallskip
In the hermitian context, Li and Yau \cite{Li_Yau_1987} established Donaldson--Uhlenbeck--Yau's theorem on a compact non-K\"ahler manifold equipped with a Gauduchon metric (see also \cite{Buchdahl_1988} for the result in dimension two). 
Chen and Zhang \cite{Chen_Zhang_2024} recently extended this result to stable reflexive sheaves over compact non-K\"ahler manifolds via a heat flow method. 

\smallskip
The second aim of our work is to further extend the existence result to singular non-K\"ahler underlying spaces, and also to provide more precise estimates on singular Hermite--Einstein metrics as in the work of Cao--Graf--Naumann--P\u{a}un--Peternell--Wu~\cite{Paun_et_al_2023_HE_sing}. 
Additionally, we also obtain the uniqueness of these singular Hermite--Einstein metrics. 

\smallskip
Let $X$ be an $n$-dimensional compact variety, which, from now on, is assumed to be normal. 
Fix $\om_\RG$ a bounded Gauduchon metric on $X$. 
For a rank $r$ torsion-free coherent sheaf $\CF$ on $X$, by Rossi's theorem \cite{Rossi_1968}, there exists a log resolution of singularities $\pi: \hX \to X$ consisting of finitely many blowups with smooth centers such that $\pi^\sharp \CF := \pi^\ast \CF / \Tor(\pi^\ast \CF)$ is locally free (i.e. a rank $r$ holomorphic vector bundle). 
From \cite[Thm.~B]{Pan_2022} and its proof, $\om_\RG^{n-1}$ (resp. $\pi^\ast \om_\RG^{n-1}$) can be extended trivially as a pluriclosed current $T_\RG$ on the whole $X$ (resp. $\pi^\ast T_\RG$ on $\hX$). 
The $\om_\RG$-degree of $\CF$ is defined as 
\[
    \deg_{\om_\RG}(\CF) := \int_{\hX} \BCC(\pi^\sharp \CF) \w \pi^\ast T_\RG
\]
where $\BCC(\pi^\sharp\CF)$ is the first Bott--Chern class of $\pi^\sharp \CF$. 
The above definition does not depend on the resolution $\pi$ (see Section~\ref{sec:defn_stability} for more details). 
The $\om_\RG$-slope of $\CF$ is then defined by 
\[
    \mu_{\om_\RG}(\CF) := \frac{\deg_{\om_\RG}(\CF)}{\rk(\CF)}.
\]
We say that $\CF$ is $\mu_{\om_\RG}$-stable (resp. $\mu_{\om_\RG}$-semi-stable) if for every proper coherent subsheaf $\CG$ of $\CF$, 
\[
    \mu_{\om_\RG}(\CG) < \mu_{\om_\RG}(\CF) \quad \text{(resp. $\mu_{\om_\RG}(\CG) \leq \mu_{\om_\RG}(\CF)$)}.
\]

\smallskip
We now fix a coherent reflexive sheaf $\CE$ of rank $r$ on $X$.
Denote by $Z := X^\sing \cup \Sing(\CE)$ and $X^\circ := X \setminus Z$.
Fix a log resolution of singularities $\pi : \hX \to X$ such that $\pi_{|\pi^{-1}(X^\circ)}: \hX \setminus D \to X^\circ$ is an isomorphism where $D = \pi^{-1}(Z)$, and $E := \pi^\ast \CE/ \Tor(\pi^\ast \CE)$ is locally free; namely, $E$ is a rank $r$ holomorphic vector bundle. 
Recall that there is a smooth hermitian metric $\om$ on $X$ such that $\om_\RG = \rho \om$ and $1 \leq \rho \in L^\infty(X^\reg) \cap \CC^\infty(X^\reg)$. 
Also, there is a $\om$-plurisubharmonic function $\vph_Z \leq -1$ such that $\vph_Z$ is smooth on $X^\circ$, $Z = \{\vph_Z = -\infty\}$, and $\pi^\ast (\om + \ddc \vph_Z)$ dominates a hermitian metric on $\hX$. 
As shown in \cite[Thm.~1.1 and Prop.~2.2]{Paun_et_al_2023_HE_sing}, there exists a hermitian metric $h_0$ on $\CE$ in the sense of Grauert--Riemenschneider \cite{Grauert_Riemenschneider_1970} whose curvature satisfies the inequality
\[
    \ii \Ta(\CE, h_0) \geq -C \om \otimes \Id_{\CE}
\]
on $X^\circ$, for some constant $C > 0$. 
Moreover, $\pi^\ast (h_0)_{|X^\circ}$ extends to a smooth hermitian metric $h_E$ on $E$. 

\smallskip
Under the above setup, we obtain the following existence and uniqueness result of singular Hermite--Einstein metrics on slope-stable reflexive sheaves: 

\begin{bigthm}\label{bigthm:HE_metric}
If $\CE$ is $\om_\RG$-stable, then $\CE$ admits a singular $\om_\RG$-Hermite--Einstein metric $h^{\HE}$. 

More precisely, $h^{\HE} = h_0 H$ on $X^\circ$ where $H$ is a positive-definite endomorphism of $\CE_{|X^\circ}$, self-adjoint with respect to $h_0$. 
This metric satisfies the following properties: 
\begin{enumerate}
    \item 
    It is smooth and solves the Hermite--Einstein equation on $X^\circ$, i.e. 
    \[
        \tr_{\om_\RG} \ii\Ta(\CE, h^{\HE}) = \ld \Id_{\CE},
    \] 
    for a constant $\ld \in \BR$;
    \item 
    Write $H = e^{-f/r} S$ with the normalization $\Tr_{\End} \log S = 0$ and $\int_X f \om_\RG^n = 0$.
    There exists a constant $C > 0$ such that the following pointwise inequalities hold on $X^\circ$: 
    \[
        e^{C\vph_Z} \otimes \Id_\CE \leq S \leq e^{-C\vph_Z} \otimes \Id_\CE, 
        \quad -C \leq f \leq -C \vph_Z,
        \quad\text{and}\quad
        \Tr_{\End} H \leq C.
    \] 
    Moreover, one has  
    \[
        \int_{X^\circ} \frac{|D' \log H|_{h_0, \om_\RG}^2}{-\vph_Z} \om_\RG^n < + \infty,
    \]
    where $D'$ is the $(1,0)$-part of the Chern connection with respect to $h_0$ on $\CE_{|X^\circ}$.
\end{enumerate}
In addition, a singular Hermite--Einstein metric satisfying the above estimates and normalization is unique. 
\end{bigthm}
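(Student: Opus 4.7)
The proof will follow the Bando--Siu strategy for constructing Hermite--Einstein metrics on stable reflexive sheaves, adapted to the hermitian setting as in Li--Yau and Chen--Zhang, and combined with the singular estimates of Cao--Graf--Naumann--P\u{a}un--Peternell--Wu. The plan is to lift the problem to the log resolution $\pi : \hX \to X$, where $E = \pi^\sharp \CE$ is a holomorphic vector bundle, to solve a family of perturbed Hermite--Einstein equations with respect to smooth approximating Gauduchon metrics on $\hX$, and then to pass to the limit on $\hX \setminus D \cong X^\circ$.

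First I would build a family of smooth Gauduchon metrics $\om_{\RG,\vep}$ on $\hX$ converging, on compact subsets of $\hX \setminus D$, to $\pi^\ast \om_\RG$, obtained by rescaling $\om_\vep$ by the densities $\rho_\vep$ supplied by Theorem~A'. With $h_E$ the smooth reference metric on $E$ provided by the P\u{a}un et al.\ construction, and with parameters $\vep, \tau > 0$, solve the regularized equation
\[
\tr_{\om_{\RG,\vep}} \ii \Ta(E, h_E H_{\vep,\tau}) = \ld_\vep \Id_E + \tau (\log H_{\vep,\tau} - c_{\vep,\tau} \Id_E),
\]
with $c_{\vep,\tau}$ chosen so that $\Tr_{\End} \log H_{\vep,\tau}$ has zero mean, by the Uhlenbeck--Yau continuity method in the Gauduchon setting as implemented by Li--Yau and Chen--Zhang. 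The Gauduchon condition $\ddc \om_{\RG,\vep}^{n-1} = 0$ is what makes the Donaldson functional and the required integration by parts well-defined in the non-K\"ahler setting.

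The heart of the argument is a uniform $L^\infty$ bound on $\log H_{\vep,\tau}$ as $\vep, \tau \to 0$. I would run Moser iteration on $\log^+ |H_{\vep,\tau}|_{h_E}$, using Theorem~B to supply the Sobolev inequality uniformly in $\vep$, together with the standard Bochner-type inequality $\Dt_{\om_{\RG,\vep}} \log^+ |H|_{h_E} \geq -C - \tr_{\om_{\RG,\vep}} \ii\Ta(E, h_E H)$. If the $L^1$ norm of $\log |H_{\vep,\tau}|_{h_E}$ were to blow up, the Uhlenbeck--Yau--Bando--Siu extraction procedure produces a weakly holomorphic projection in the limit, which extends across $D$ (via Bando--Siu's reflexivity theorem in codimension two and a Hartogs-type argument across $Z$) to a coherent subsheaf $\CG \subsetneq \CE$ destabilizing $\CE$ with respect to $\mu_{\om_\RG}$, contradicting stability. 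I expect this to be the main obstacle: one must justify that the limiting degree of $\CG$, obtained from the Chern--Weil energy of the extracted projection paired against the pluriclosed current $T_\RG$, agrees with the Bott--Chern definition $\deg_{\om_\RG}(\CG) = \int_{\hX} \BCC(\pi^\sharp \CG) \w \pi^\ast T_\RG$. This requires careful integration by parts using only pluriclosedness rather than closedness of $T_\RG$, together with approximation of the projection by smooth endomorphisms.

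Once the uniform $L^\infty$ bound is in hand, the refined pointwise estimates follow from the P\u{a}un et al.\ strategy with $\vph_Z$ as a barrier: a maximum principle applied to $\log \tr_{h_E} H + C \vph_Z$ on $\hX$ delivers the pointwise comparison $e^{C\vph_Z} \Id_\CE \leq S \leq e^{-C\vph_Z} \Id_\CE$ and the bound on $\Tr_{\End} H$, while a weighted energy identity obtained by testing the Hermite--Einstein equation against $(-\vph_Z)^{-1} \log H$ and integrating by parts via the Gauduchon condition yields the finiteness of $\int_{X^\circ} |D'\log H|^2_{h_0,\om_\RG} /(-\vph_Z) \, \om_\RG^n$. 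Sending $\tau \to 0$ and then $\vep \to 0$ produces the singular metric $h^\HE = h_0 H$ on $X^\circ$. For uniqueness, if $H_1, H_2$ are two such solutions with the stated normalization, $K := H_1^{-1} H_2$ satisfies a weak elliptic inequality $\Dt_{\om_\RG} \Tr_{\End} \log K \geq 0$ on $X^\circ$; the boundedness of $K$, Gauduchon integration by parts justified by $\vph_Z$-cutoffs, and the finite weighted energy bound together force $\Tr_{\End} \log K$ to be constant, after which the normalizations $\Tr_{\End} \log S = 0$ and $\int_X f \om_\RG^n = 0$ imply $H_1 = H_2$.
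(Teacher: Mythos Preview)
Your overall strategy---approximate by smooth Gauduchon metrics on the resolution, obtain uniform estimates, and pass to the limit---is the same as the paper's, but the paper organizes the existence part differently and more cleanly. Rather than running a two-parameter continuity method in $(\vep,\tau)$, the paper first proves an \emph{openness of stability} result (their Theorem~\ref{thm:openness_stability}): if $\CE$ is $\om_\RG$-stable, then $E$ is $\om_{\vep,\RG}$-stable for all sufficiently small $\vep$. This is done by a compactness argument on $L^2_1$ projections and an extension of the limiting subsheaf across $Z$ via Siu's theorem. Once this is in hand, Li--Yau applies as a black box and gives a genuine Hermite--Einstein metric $h_\vep$ for each small $\vep$; there is no $\tau$ at all. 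Your plan to ``send $\tau \to 0$ and then $\vep \to 0$'' implicitly needs exactly this openness step: for fixed $\vep>0$, the blow-up of $\|\log H_{\vep,\tau}\|_{L^1}$ as $\tau\to 0$ would only produce a subsheaf destabilizing $E$ with respect to $\om_{\vep,\RG}$, not $\CE$ with respect to $\om_\RG$, so you cannot derive a contradiction from $\om_\RG$-stability alone. The Simpson-type contradiction in the paper is used only later, to bound $\int_{\hX} \log \Tr_{\End} S_\vep\,\om_{\vep,\RG}^n$ uniformly in $\vep$.

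Your uniqueness argument has a genuine error. With $K = H_1^{-1}H_2$ one has $\Tr_{\End}\log K = \log\det K = f_1 - f_2$, so the inequality $\Dt_{\om_\RG}\Tr_{\End}\log K \geq 0$ and the normalizations only yield $f_1=f_2$; they say nothing about $S_1$ versus $S_2$. The paper instead works with $A := \log_+ |\Id_\CE|^2_{h_1^\ast\otimes h_2}$ (essentially $\log\Tr_{\End}K$, not $\Tr_{\End}\log K$), shows $\Dt_{\om_\RG} A \geq 0$, and uses carefully constructed cutoffs near $Z$ together with the Gauduchon condition and the $L^2_1$-control on the Gauduchon factor $\rho$ to force $A$ to be constant. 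From the associated Bochner identity this gives $D'_{h_1^\ast\otimes h_2}\Id_\CE = 0$, so $\Phi = K$ is a holomorphic section of $\End(\CE_{|X^\circ})$, which extends across $Z$ by reflexivity; stability (hence simplicity) of $\CE$ then forces $\Phi$ to be a scalar, and the normalizations give $H_1=H_2$. The key point is that you need the \emph{trace} of $K$ (or the norm of $\Id_\CE$), not the trace of its logarithm, to access the full endomorphism and conclude holomorphicity.
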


To obtain such a singular Hermite--Einstein metric, we proceed by approximation. 
We briefly outline the strategy here. 
By Theorem~\ref{bigthm:approx_and_conv}, we have a family of Gauduchon metrics $(\om_{\vep,\RG})_{\vep \in (0,1]}$ on $\hX$ which up to subsequence converges locally smoothly on $\hX \setminus D$ towards $\pi^\ast \om_\RG$ with their Gauduchon factors uniformly controlled. 
We next establish the $\om_{\vep,\RG}$-stability of $E$ for all sufficiently small $\vep$. 
By the theorem of Li and Yau \cite{Li_Yau_1987}, for all $\vep$ sufficiently small, there exists a $\om_{\vep,\RG}$-Hermite--Einstein metric $h_\vep^{\HE} = h_E H_\vep$ on $E$. 
Following a scheme similar to \cite[Sec.~3-4]{Paun_et_al_2023_HE_sing}, we derive uniform estimates for $H_\vep$ ensuring the existence of a converging subsequence in $\CC^\infty_{\loc}(\hX \setminus D)$, with the limit being the desired singular Hermite--Einstein metric. 

\smallskip
We highlight below two main differences with the K\"ahler setting: 

\smallskip
\begin{itemize}
\item 
{\bf Openness of slope stability.} 
To establish the $\om_{\vep,\RG}$-stability of $E$, we follow a contradiction argument in \cite{Bruasse_2001, Chen_Zhang_2024}.
In the K\"ahler setting, the openness of stability is more straightforward due to the linear variation of classes and the well-established theory on finiteness results of the Douady space (see e.g. \cite[Lem.~3.3, Lem.~3.4]{Claudon_Graf_Guenancia_2022} and \cite[Cor.~6.3]{Toma_2021}). 
However, in our case, the metrics $(\om_{\vep,\RG})_{\vep \in (0,1]}$ and their Aeppli cohomology classes evolve in a non-linear way, and there is no well-established theory of stable sheaves in our setting. 

\smallskip
\item 
{\bf Uniqueness of Hermite--Einstein metrics.} 
Using the estimates provided in Theorem~\ref{bigthm:HE_metric}, we establish the uniqueness of singular Hermite--Einstein metrics. 
In the non-K\"ahler setting, even when the underlying space $X$ is smooth, the uniqueness of such metrics on stable reflexive sheaves that are not locally free is a novel result. 
In the K\"ahler context, uniqueness is proved by using the "admissible condition" (i.e. finite $L^2$-norm of the curvature) of singular Hermite--Einstein metrics (cf. \cite{Bando_Siu_1994, Chen_Wentworth_2024, Chen_2022}). 
However, in the hermitian case, the $L^2$-norm of the curvature cannot be directly controlled by topological quantities when $n \geq 3$, so it remains unclear whether these singular Hermite--Einstein metrics are admissible. 
As a result, the estimates in Theorem~\ref{bigthm:HE_metric} become crucial to our approach to proving uniqueness.
\end{itemize}

\smallskip
We also stress that the key Harnack-type estimate used in \cite[Thm.~2.7]{Paun_et_al_2023_HE_sing} was originally established in \cite[Lem.~2]{Guo_Phong_Sturm_2024} via a global approach involving auxiliary Monge--Amp\`ere equations. 
In our situation, as reasons mentioned before, this global method is not well-suited. 
Hence, we establish a Harnack-type estimate via the more standard approach of Moser iteration, and the uniform Sobolev inequality for the desired family of metrics is available, thanks to Theorem~\ref{bigthm:unif_Sobolev}. 

\smallskip
Furthermore, the application of uniform Sobolev and Poincar\'e inequalities allows us to slightly simplify the approach in \cite{Paun_et_al_2023_HE_sing} on the control of $\Tr_{\End} \log H_\vep$. 

\subsection*{Organization of the article}
\begin{itemize}
    \item Section~\ref{sec:prelim} recalls basic notions of quasi-subharmonic and quasi-plurisubharmonic functions. 
    
    \item Section~\ref{sec:unif_Sobolev} establishes uniform Sobolev inequalities on K-domains. 
    
    \item Section~\ref{sec:SGM} combines uniform Sobolev inequalities in Section~\ref{sec:unif_Sobolev} with the geometry of a resolution of singularities to prove Theorem~\ref{bigthm:unif_Sobolev} and Theorem~\ref{bigthm:approx_and_conv}. 
    
    \item Section~\ref{sec:slope_openness} reviews the concept of slope-stability and studies an openness property of stability on resolutions. 
    
    \item Section~\ref{sec:HE_estimates} focuses on proving Theorem~\ref{bigthm:HE_metric}. 
\end{itemize}

\begin{ack*}
The author is grateful to Vincent~Guedj, Henri~Guenancia, Mihai~P\u{a}un, and Song~Sun for helpful discussions and suggestions. 
The author is indebted to Tam\'as~Darvas and Junsheng~Zhang for their comments and to Julien~Keller, Yangyang~Li, Davide~Parise, and Junsheng~Zhang for useful references. 
The author would also like to thank Tristan Collins for discussing related problems during the author's visit to MIT. 

\smallskip
Part of this article is based on work supported by the National Science Foundation under Grant No. DMS-1928930, while the author was in residence at the Simons Laufer Mathematical Sciences Institute (formerly MSRI) in Berkeley, California, during the Fall 2024 semester. 
The author is also supported by postdoc funding from CIRGET, UQAM.
\end{ack*}

\section{Preliminaries}\label{sec:prelim}
In this section, we review the notions of subharmonic and plurisubharmonic functions that will be frequently used to prove main estimates. 
We denote by $\dc = \frac{\ii}{2\pi}(\db - \pl)$ the twisted exterior derivative so that $\ddc = \frac{\ii}{\pi} \ddb$.
Throughout the article, the term "variety" always refers to an irreducible reduced complex analytic space. 

\subsection{Subharmonic functions}
Let $U$ be an open set in an $n$-dimensional complex manifold $X$ and let $\gm$ be a hermitian positive $(1,1)$-form on $U$. 
A function $h \in \CC^2(U)$ is called {\it $\gm$-harmonic} if 
\[
    \ddc h \w \gm^{n-1} = 0 \quad \text{in} \ \, U.
\]
Denote by $\Har_\gm(U)$ the set of all $\gm$-harmonic functions on $U$. 

\begin{defn}
A function $u: U \to \BR \cup \{-\infty\}$ is said to be {\it $\gm$-subharmonic} if $u$ is upper semi-continuous and for every $D \subset U$ and every $h \in \Har_\gm(D)$ the following implication holds: 
\[
    u \leq h \ \, \text{on} \ \, \pl D
    \implies
    u \leq h \ \, \text{in} \ \, D.
\]
Set $\SH_\gm(U)$ the set of all $\gm$-subharmonic functions on $U$ which are locally integrable.
\end{defn}

The notion of the $\gm$-subharmonicity of a function $u$ is essentially equivalent to the inequality $\ddc u \w \gm^{n-1} \geq 0$ in the sense of distributions. 
More precisely, if $u \in \SH_\gm(U)$, by \cite[Thm.~1, p.~136]{Herve_Herve_1972}, $\ddc u \w \gm^{n-1} \geq 0$ in the sense of distributions. 
Conversely, \cite[Thm.~A]{Littman_1963} shows that the later condition implies that $u$ coincides almost everywhere with a unique function $u^\ast \in \SH_\gm(U)$ given by 
\[
    u^\ast(x) := \esslimsup_{y \to x} u(y) := \lim_{r \to 0} \esssup_{\BB_r(x)} u
\]
where $\esssup_{\BB_r(x)} u$ is the essential supremum of $u$ on $\BB_r(x)$ the ball of radius $r$ centered at $x$.

\subsection{Plurisubharmonic functions and forms on singular spaces}
Let $\Om$ be a domain in $\BC^n$. 
A {\it plurisubharmonic} (psh) function $\vph: \Om \to \BR \cup \{-\infty\}$ is an upper semi-continuous function satisfying the sub-mean value inequality along every complex line, i.e. for all $x \in \Om$, and all $\xi \in \BC^n$ so that $|\xi| < \dist (x, \pl\Om)$, 
\[
    \vph(x) \leq \frac{1}{2\pi} \int_0^{2\pi} \vph(x + \xi e^{\ii \ta}) \dd \ta. 
\]
Since the plurisubharmonic condition is preserved under composition with a holomorphic map, the plurisubharmonicity still makes sense on complex manifolds. 
We shall denote by $\PSH(\Om)$ the set of all psh functions that are not identically $-\infty$ on $\Om$, which is a domain in $\BC^n$ or in a complex manifold. 

\smallskip
More general notions of psh functions have been defined on complex manifolds: 
\begin{defn} 
Let $X$ be a complex manifold and let $\om$ be a smooth real $(1,1)$-form on $X$. 
\begin{enumerate}
    \item A function $\vph: X \to \BR \cup \{-\infty\}$ is {\it quasi-plurisubharmonic} (quasi-psh) if locally $\vph$ can be decomposed as a sum of a smooth function and a psh function. 
    \item A function $\vph$ is {\it $\om$-plurisubharmonic} ($\om$-psh) if $\vph$ is quasi-psh and $\om + \ddc \vph \geq 0$ in the sense currents.
\end{enumerate}
\end{defn}

Now, suppose that $X$ is a reduced complex analytic space of pure dimension $n \geq 1$. 
We denote by $X^\reg$ the complex manifold of regular points of $X$ and $X^\sing = X \setminus X^\reg$ the singular set of $X$. 
The notions of smooth forms/metrics/currents/psh functions can also be extended to singular contexts. 
We quickly review the basic definitions here (see \cite{Demailly_1985} for further details): 

\begin{defn}
We say that
\begin{enumerate}
    \item $\ta$ is a smooth form on $X$ if it is smooth form on $X^\reg$ and given any local embedding $X \xhookrightarrow[\loc.]{} \BC^N$, $\ta$ extends smoothly to $\BC^N$.
    
    \item $\om$ is a smooth hermitian metric on $X$ if it is a smooth $(1,1)$-form which locally extends to a hermitian metric on $\BC^N$. 
\end{enumerate}
The notion of currents is then defined by duality.
\end{defn}

Psh functions on $X$ are locally the restriction of psh functions under local embeddings of $X$ into $\BC^N$.
One defines similarly quasi-psh and $\om$-psh functions.
We denote by $\PSH(X,\om)$ the set of all integrable $\om$-psh functions on $X$.

\smallskip
Forn{\ae}ss--Narasimhan \cite[Thm.~5.3.1]{FN_1980} proved that the plurisubharmonicity of a function $u$ is equivalent to the following: for any analytic disc $h: \BD \to X$, $u \circ h$ is either subharmonic on $\BD$ or identically $-\infty$. 
If $u$ is a psh function on $X^\reg$ and locally bounded from above on $X$, one can extend $u$ to $X$ as follows: 
\begin{equation}\label{eq:usc_ext}
    u^\ast(x) = \limsup_{X^\reg \ni y \to x} u(y).
\end{equation}
When $X$ is locally irreducible, Grauert and Remmert \cite[Satz~3]{Grauert_Remmert_1956} have shown that if $u$ is psh on $X^\reg$, then the function $u^\ast$ defined by \eqref{eq:usc_ext} is psh on $X$ (see also \cite[Thm.~1.7]{Demailly_1985}).

\smallskip
We now further assume that $X$ is locally irreducible. Following \cite{FN_1980}, we say that $X$ is Stein if it admits a $\CC^2$ strongly psh exhaustion. 
A domain $\Om \Subset X$ is strongly pseudoconvex if it admits a negative $\CC^2$ psh exhaustion; namely, there is a $\CC^2$ psh function $\rho$ defined near a neighborhood $\Om'$ of $\overline{\Om}$ such that $\Om = \set{x \in \Om'}{\rho(x) < 0}$ and $\Om_c := \set{x \in \Om'}{\rho(x) < c} \Subset \Om$ is relatively compact for any $c < 0$. 

\smallskip
We also recall the notion of singular Gauduchon metrics introduced in \cite[Def.~1.2]{Pan_2022}:

\begin{defn}\label{defn:bounded_Gauduchon}
A hermitian metric $\om_\RG$ on a complex variety $X$ is 
\begin{enumerate}
    \item {\it Gauduchon} if it satisfies $\ddc \om_\RG^{n-1} = 0$ on $X^\reg$; 
    \item {\it bounded Gauduchon} if there exists a smooth hermitian metric $\om$ on $X$ and a positive bounded smooth function $\rho$ on $X^\reg$ such that $\om_\RG = \rho^{\frac{1}{n-1}} \om$ and $\ddc (\rho \om^{n-1}) = 0$ on $X^\reg$.
\end{enumerate}
\end{defn}

\section{Uniform Sobolev inequality on K-domains}\label{sec:unif_Sobolev}
This section aims to prove uniform Sobolev inequalities on K-domains. 
These are domains where one can solve complex Monge--Amp\`ere equations and obtain a Ko{\l}odziej-type $L^\infty$-estimate. 

\begin{defn}\label{defn:K_domain}
Let $\Om$ be an $n$-dimensional complex manifold with a smooth boundary, and let $\dd V$ be a smooth volume form on $\Om$ up to the boundary. 
We call $\Om$ a \emph{K-domain} if for any $p>1$ and for any $f \in L^p(\Om, \dd V)$, the complex Monge--Amp\`ere equation 
\[
    (\ddc \vph)^n = f \dd V 
    \quad\text{with}\quad
    \vph_{|\pl\Om} = 0
\]
admits a solution $\vph \in \PSH(\Om) \cap \CC^0(\overline{\Om})$ and there is a uniform constant $C_{\Om,p} > 0$ such that 
\[
    \|\vph\|_{L^\infty(\Om)} \leq C_{\Om,p} \|f\|_{L^p(\dd V)}^{1/n}.
\]
\end{defn}

By Ko{\l}odziej's $L^\infty$-estimate \cite{Kolodziej_1998}, strongly pseudoconvex domains in $\BC^n$ are examples of K-domains. 
In the next section, we shall prove that generic desingularizations of strongly pseudoconvex domains in Stein spaces are also K-domains (see Lemma~\ref{lem:resoln_pscvx_Kdomain}).

\begin{defn}
Let $\Om$ be an $n$-dimensional complex manifold with a smooth boundary, and let $\dd V$ be a smooth volume form on $\Om$, extending smoothly up to the boundary.  
For $p > 1$ and $A > 0$, we define the set $\CK(\Om, \dd V, p, A)$ as the collection of K\"ahler metrics, where each $\om$ extends to an open neighborhood $U_\om$ of $\overline{\Om}$, and satisfies the following integrability condition:
\[
    \lt(\int_\Om f_\om^p \, \dd V\rt)^{1/p} \leq A,
    \quad \text{where $f_\om := \frac{\om^n}{\dd V}$.}
\]
\end{defn}

\subsection{Estimates of Green's function on K-domains}
Let $(M,g)$ be a compact oriented Riemannian manifold with boundary $\pl M$, and let $\Dt_g$ be the Laplace--Beltrami operator on $(M,g)$ acting on functions with zero boundary data. 
From the standard elliptic theory, there exists a Green's function $G$ defined on $(M \times M) \setminus \set{(x,x)}{x \in M}$ such that
\[
    \int_M G(x,y) \Dt_g f(y) \, \dd \mathrm{vol}_g = f(x) 
\]
for all $f \in \CC^\infty(\overline{\Om})$ satisfying $f_{|\pl M} = 0$; namely, for a fixed $x \in M$, $\Dt_g G(x,y) = \dt_x$.
Moreover, for any $x \in M$ and $y \in \pl M$, $G(x,y) = 0$.
In the sequel, we shall denote by $G_\om$ the Green's function associated with a K\"ahler metric $\om$,
\[
    \Dt_\om f := n\frac{\ddc f \w \om^{n-1}}{\om^n} 
    \quad\text{and}\quad
    |\dd f|_\om^2 := n \frac{\dd f \w \dc f \w \om^{n-1}}{\om^n}.
\]

\smallskip
In \cite{Guo_Phong_Song_Sturm_2024_diameter}, Guo--Phong--Song--Sturm have developed several estimates on Green's functions on compact K\"ahler manifolds.  
We shall follow the general scheme of \cite{Guo_Phong_Song_Sturm_2024_diameter} to establish estimates for Green's functions on K-domains. 

\smallskip
We recall a pluripotential arithmetic and geometric mean inequality due to Ko{\l}odziej \cite[Lem.~1.2]{Kolodziej_2003} and a more general version later by Dinew \cite[Thm.~3]{Dinew_2009}. 
The following version is a simple version for our later usage: 

\begin{lem}\label{lem:AMGM_mes}
Let $u$ and $v$ be bounded psh functions in a domain of $\BC^n$. 
Suppose that 
\[
    (\ddc u)^n \geq f \dd V
    \quad\text{and}\quad
    (\ddc v)^n \geq g \dd V
\]
for $f, g \in L^1(\dd V)$, where $\dd V$ is a smooth volume form. 
Then for any $k \in \{1, \cdots, n-1\}$, 
\[
    (\ddc u)^k \w (\ddc v)^{n-k} \geq f^{\frac{k}{n}} g^{\frac{n-k}{n}} \dd V.
\]
\end{lem}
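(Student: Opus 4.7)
The plan is to reduce the inequality to a pointwise algebraic statement at each point, prove it via simultaneous diagonalization and AM--GM for smooth strictly psh functions, and then extend to bounded psh functions by local regularization together with Bedford--Taylor continuity.

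First, I would work in a coordinate chart where $u, v$ are smooth and strictly psh. Fix a point $x_0$. Since $\ddc u$ and $\ddc v$ are positive-definite Hermitian $(1,1)$-forms at $x_0$, they can be simultaneously diagonalized: in suitable linear coordinates, $\ddc u (x_0) = \sum_i \lambda_i \beta_i$ and $\ddc v(x_0) = \sum_i \mu_i \beta_i$ with $\beta_i = \ii\, \dd z_i \wedge \dd \bar z_i$. A direct expansion of wedge products (using that $\beta_i \wedge \beta_i = 0$ while distinct $\beta_i$'s commute) yields
\[
    \frac{(\ddc u)^k \wedge (\ddc v)^{n-k}}{\dd V}\bigg|_{x_0}
    \;=\; c_n\, k!(n-k)! \sum_{|I|=k} \prod_{i \in I}\lambda_i \prod_{j \notin I}\mu_j,
\]
with the same universal constant $c_n$ appearing in $(\ddc u)^n/\dd V = c_n\, n!\prod_i \lambda_i$ and $(\ddc v)^n/\dd V = c_n\, n!\prod_j \mu_j$. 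The combinatorial core is AM--GM applied to the $\binom{n}{k}$ summands: each index $i$ belongs to exactly $\binom{n-1}{k-1}$ subsets $I$ of size $k$, whence
\[
    \sum_{|I|=k}\prod_{i\in I}\lambda_i\prod_{j\notin I}\mu_j
    \;\geq\; \binom{n}{k}\Bigl(\prod_i \lambda_i\Bigr)^{k/n}\Bigl(\prod_j \mu_j\Bigr)^{(n-k)/n}.
\]
Combining these with $\binom{n}{k}k!(n-k)!=n!$ and the pointwise assumptions $c_n n!\prod_i \lambda_i \geq f$ and $c_n n!\prod_j \mu_j \geq g$, a short arithmetic check yields $(\ddc u)^k \wedge (\ddc v)^{n-k} \geq f^{k/n} g^{(n-k)/n}\, \dd V$, establishing the lemma in the smooth strictly psh case.

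For general bounded psh $u, v$, I would regularize locally by convolution, producing smooth psh decreasing sequences $u_\epsilon \searrow u$ and $v_\epsilon \searrow v$. By Bedford--Taylor's continuity theorem for monotone sequences of bounded psh functions, the mixed currents $(\ddc u_\epsilon)^k \wedge (\ddc v_\epsilon)^{n-k}$ converge weakly as measures to $(\ddc u)^k \wedge (\ddc v)^{n-k}$. The main obstacle lies precisely here: the hypotheses $(\ddc u)^n \geq f\, \dd V$ and $(\ddc v)^n \geq g\, \dd V$ are not preserved under convolution, so one cannot naively apply the smooth inequality to $(u_\epsilon, v_\epsilon)$ and pass to the limit. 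This difficulty is exactly what is resolved by Dinew in \cite{Dinew_2009}, and the lemma is the special case of \cite[Thm.~3]{Dinew_2009} applied to the $n$-tuple $(\underbrace{u,\ldots,u}_{k},\underbrace{v,\ldots,v}_{n-k})$. Alternatively, one can give a self-contained argument in the spirit of Kolodziej using the comparison principle and auxiliary Dirichlet problems to replace $u_\epsilon, v_\epsilon$ by continuous psh solutions whose Monge--Amp\`ere densities do majorize $f$ and $g$; but invoking Dinew's general inequality is the cleanest option once the pointwise algebraic step has been recorded.
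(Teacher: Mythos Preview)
The paper does not give its own proof of this lemma: it simply records it as a known result of Ko{\l}odziej \cite[Lem.~1.2]{Kolodziej_2003} and Dinew \cite[Thm.~3]{Dinew_2009}. Your proposal ultimately does the same---you correctly identify that regularization does not preserve the Monge--Amp\`ere lower bounds and defer the general bounded psh case to Dinew---while additionally supplying a correct pointwise AM--GM argument for the smooth strictly psh case; so your approach agrees with the paper's, with some helpful extra detail.
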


We are going to show that $\om$-subharmonic functions are uniformly bounded from below when their Laplacian measures have a uniform $L^{nl}$ control for $l > \frac{p}{p-1}$. 

\begin{lem}\label{lem:subhar_est}
Fix $p > 1$ and $A > 0$. 
Let $\Om$ be a K-domain, $\dd V$ be a smooth volume form on $\Om$, and $\om \in \CK(\Om, \dd V, p, A)$.
Suppose that $v \in \SH_\om(\Om)$ satisfies $v_{|\pl\Om} = 0$ and
\[
    \ddc v \w \om^{n-1} \leq g \om^n 
\]
for some $g \in L^{nl}(\dd V)$ with $l \in \lt(\max\lt\{\frac{p}{p-1}, 2\rt\}, +\infty\rt]$.
Then 
\[
    \|v\|_{L^\infty(\Om)} \leq C_{\Om,r} \cdot A^{1/n} \cdot \|g\|_{L^{nl}(\dd V)}, 
\] 
where $r > 1$ is a constant such that $1/r = 1/p + 1/l$.
\end{lem}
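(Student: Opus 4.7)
The plan is to construct a plurisubharmonic barrier $\psi$ on $\Om$ via the K-domain hypothesis and then deduce $v \geq \psi$ from a weak minimum principle. Since $v$ is $\om$-subharmonic we have $\ddc v \w \om^{n-1} \geq 0$, so we may replace $g$ by $|g|$ (which does not enlarge the $L^{nl}$-norm and preserves the hypothesis) and assume $g\geq 0$. The maximum principle applied to $v$ with $v_{|\pl\Om}=0$ also gives $v \leq 0$ on $\Om$, so it suffices to bound $v$ from below by $-C_{\Om,r}A^{1/n}\|g\|_{L^{nl}}$.

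The construction of the barrier proceeds as follows. Set $F:= g^{n} f_\om$ with $f_\om = \om^{n}/\dd V$. By Hölder's inequality applied with the exponent relation $1/r = 1/p + 1/l$,
\[
    \|F\|_{L^{r}(\dd V)} \leq \|f_\om\|_{L^{p}(\dd V)}\cdot\|g^{n}\|_{L^{l}(\dd V)} \leq A\,\|g\|_{L^{nl}(\dd V)}^{n}.
\]
The assumption $l > p/(p-1)$ ensures $r>1$, so the K-domain property produces $\psi\in \PSH(\Om)\cap \CC^{0}(\overline\Om)$ solving $(\ddc \psi)^{n} = F\dd V = g^{n}\om^{n}$, with $\psi_{|\pl\Om} = 0$ and
\[
    \|\psi\|_{L^{\infty}(\Om)} \leq C_{\Om,r}\|F\|_{L^{r}}^{1/n} \leq C_{\Om,r}\, A^{1/n}\,\|g\|_{L^{nl}(\dd V)}.
\]
Now apply Lemma~\ref{lem:AMGM_mes} to $\psi$ and a local smooth Kähler potential of $\om$ (available since $\om$ extends to a neighborhood of $\overline\Om$), with $k=1$. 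This gives the measure-theoretic inequality
\[
    \ddc \psi \w \om^{n-1} \geq (g^{n} f_\om)^{1/n}\, f_\om^{(n-1)/n}\,\dd V = g\,\om^{n}.
\]
Combining with the hypothesis $\ddc v \w \om^{n-1}\leq g\om^{n}$ yields $\ddc(v-\psi)\w \om^{n-1}\leq 0$, so $v-\psi$ is $\om$-superharmonic with vanishing boundary values; the minimum principle gives $v\geq \psi$ on $\Om$, and hence $\|v\|_{L^{\infty}(\Om)}\leq \|\psi\|_{L^{\infty}(\Om)}$, which is the desired bound.

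The main subtlety is that $\psi$ is only continuous psh and $v$ only upper semi-continuous, so the inequality $\ddc \psi \w \om^{n-1}\geq g\om^{n}$ must be interpreted in the sense of Bedford--Taylor measures, and the minimum principle must be invoked in the weak form adapted to $\om$-superharmonic functions. Lemma~\ref{lem:AMGM_mes} is stated in $\BC^{n}$, so the AM-GM step is first verified in local charts using a smooth local potential of $\om$ on a neighborhood of $\overline\Om$, and then globalized as a measure inequality on $\Om$; the weak comparison principle for $\om$-harmonic equations with continuous data then closes the argument.
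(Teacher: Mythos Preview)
Your proof is correct and follows essentially the same route as the paper: construct the barrier $\psi$ (the paper calls it $u$) solving $(\ddc\psi)^n = g^n\om^n$ via the K-domain hypothesis, use the AM--GM inequality (Lemma~\ref{lem:AMGM_mes}) against a local K\"ahler potential of $\om$ to get $\ddc\psi\wedge\om^{n-1}\geq g\om^n$, and conclude $\psi\leq v\leq 0$ by the comparison principle, with the $L^\infty$ bound on $\psi$ coming from H\"older with exponents $p,l$.

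One remark on your closing paragraph: the paper dispatches the regularity subtlety you raise more directly. The hypothesis $l>2$ (hence $nl>2n$) is used precisely so that standard elliptic theory (e.g.\ \cite[Thm.~9.30]{Gilbarg_Trudinger}) gives $v$ continuous on $\overline\Om$; then $u-v$ is a genuine continuous $\om$-subharmonic function with zero boundary values, and the classical maximum principle applies without needing a weak formulation. This is the only place the condition $l>2$ enters, and it explains why the statement imposes $l>\max\{p/(p-1),2\}$ rather than just $l>p/(p-1)$.
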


\begin{proof}
As $g \in L^{nl}(\dd V)$ for some $l > 2$, $v$ is continuous by the standard elliptic theory (cf. \cite[Thm.~9.30]{Gilbarg_Trudinger}). 
Since $\Om$ is a K-domain, there exists a solution $u \in \PSH(\Om) \cap \CC^0(\overline{\Om})$ to 
\[
    (\ddc u)^n = g^n \om^n 
    \quad\text{with}\quad
    u_{|\pl\Om} = 0.
\]
By Lemma~\ref{lem:AMGM_mes} and the assumption on $v$, we have 
\[
    \ddc u \w \om^{n-1} 
    \geq g \om^n 
    \geq \ddc v \w \om^{n-1}.
\]
Thus, $(u-v)$ is also $\om$-subharmonic, as $(u-v)$ is continuous.  
By zero boundary values of $u$ and $v$, and the maximum principle, we obtain $(u-v) \leq 0$ on $\Om$, and thus, $u \leq v \leq 0$. 
Set $r > 1$ such that $1/r = 1/p + 1/l$.  
By Ko{\l}odziej's estimate on $\Om$ and H\"older inequality, one can derive the following
\begin{align*}
    \|v\|_{L^\infty(\Om)} 
    &\leq \|u\|_{L^\infty(\Om)} 
    \leq C_{\Om,r} \|g^n \cdot f_\om\|_{L^r(\dd V)}^{1/n}\\
    &\leq C_{\Om,r} \cdot \|f_\om\|_{L^p(\dd V)}^{1/n} \|g\|_{L^{nl}(\dd V)} 
    \leq C_{\Om,r} \cdot A^{1/n} \cdot \|g\|_{L^{nl}(\dd V)}.
\end{align*}
\end{proof}

Using Lemma~\ref{lem:subhar_est}, we establish the following uniform $L^1$-estimate of Green's functions: 

\begin{lem}\label{lem:G_L1_est}
Fix $p > 1$ and $A > 0$.
Let $\Om$ be a K-domain, and $\dd V$ be a smooth volume form on $\Om$. 
There exists a constant $C_0 := \frac{C_{\Om,p} \cdot A^{1/n}}{n}$ such that for every $x \in \Om$, and for every $\om \in \CK(\Om, \dd V, p, A)$, 
\[
    \int_{\Om} -G_\om(x,\cdot) \om^n \leq C_0.
\]
\end{lem}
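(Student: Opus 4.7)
The plan is to express the target integral $\int_\Om -G_\om(x,\cdot)\,\om^n$ as the value at $x$ of an explicit $\om$-subharmonic function and then apply Lemma~\ref{lem:subhar_est} to that function with a constant right-hand side.

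Concretely, let $v \in \CC^\infty(\overline{\Om})$ be the unique classical solution of the Dirichlet problem
\[
\ddc v \wedge \om^{n-1} = \frac{1}{n}\,\om^n \ \text{on } \Om,\qquad v_{|\pl\Om} = 0,
\]
equivalently $\Delta_\om v = 1$ with zero boundary data. Existence, uniqueness, and regularity up to the boundary follow from standard elliptic theory, since $\om$ extends as a smooth K\"ahler metric to an open neighborhood of $\overline{\Om}$ and $\pl\Om$ is smooth. Plugging $v$ into the reproducing formula $\int_\Om G_\om(x,y)\,\Delta_\om f(y)\,\om^n(y) = f(x)$ (valid for $f \in \CC^\infty(\overline{\Om})$ with $f_{|\pl\Om}=0$) gives the Green's representation
\[
v(x) = \int_\Om G_\om(x,y)\,\om^n(y).
\]

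The function $v$ belongs to $\SH_\om(\Om)$, vanishes on $\pl\Om$, and satisfies $\ddc v \wedge \om^{n-1} \leq g\,\om^n$ for the constant $g \equiv 1/n \in L^\infty(\Om)$. Applying Lemma~\ref{lem:subhar_est} with $l = +\infty$ (which trivially exceeds $\max\{p/(p-1),2\}$ for every $p>1$), so that $r = p$ and $\|g\|_{L^{nl}(\dd V)} = 1/n$, yields
\[
\|v\|_{L^\infty(\Om)} \leq C_{\Om,p}\cdot A^{1/n}\cdot \tfrac{1}{n} = C_0.
\]

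Finally, the Dirichlet Green's function is non-positive on the interior: since $G_\om(x,\cdot)$ is $\om$-harmonic away from $x$, vanishes on $\pl\Om$, and has the usual negative singular profile at $x$, the maximum principle forces $G_\om(x,\cdot) \leq 0$ on $\Om \setminus \{x\}$. Hence $\int_\Om -G_\om(x,y)\,\om^n(y) = -v(x) \leq \|v\|_{L^\infty(\Om)} \leq C_0$. The proof is essentially setup; the only subtle point is tracking normalization conventions to identify $G_\om$ with the Green's function of $\Delta_\om$ against the measure $\om^n$, after which Lemma~\ref{lem:subhar_est} does all the work by absorbing the constant right-hand side $g \equiv 1/n$.
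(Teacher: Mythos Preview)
Your proof is correct and follows the same strategy as the paper: solve the Poisson equation $\Dt_\om v = 1$ with zero boundary data, identify $v(x)$ with $\int_\Om G_\om(x,\cdot)\,\om^n$ via the Green representation, and then invoke Lemma~\ref{lem:subhar_est} with the constant right-hand side $g\equiv 1/n$ and $l=+\infty$ to get the $L^\infty$-bound $C_0$. The only cosmetic difference is that the paper approximates $\1_\Om$ by compactly supported $\chi_k$, solves $\Dt_\om v_k=\chi_k$, and passes to the limit by monotone convergence, whereas you appeal directly to elliptic boundary regularity to place $v\in\CC^\infty(\overline{\Om})$ and apply the Green formula in one shot; both are legitimate and yield the same constant.
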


\begin{proof}
Let $(\chi_k)_k$ be a sequence of smooth functions with compact support in $\Om$ and $\chi_k$ increases towards $\1_{\Om}$ the characteristic function of $\Om$.
Consider $v_k$ a solution to the following Poisson equation with zero Dirichlet boundary condition:
\[
    \Dt_{\om} v_k = \chi_k
    \quad\text{and}\quad
    {v_k}_{|\pl\Om} = 0.
\]
By Lemma~\ref{lem:subhar_est}, for all $k$, we have $\|v_k\|_{L^\infty(\Om)} \leq C_0 \cdot V_\om(\Om)^{1/n}$ where $0 < C_0 = \frac{C_{\Om,p} \cdot A^{1/n}}{n}$ is a uniform constant.
By the Green formula and monotone convergence theorem, we have
\[
    v_k(x) = \int_{\Om} G_\om(x,\cdot) \Dt_{\om} v_k \om^n
    = \int_{\Om} G_\om(x,\cdot) \chi_k \om^n 
    \xrightarrow[k \to +\infty]{} 
    \int_{\Om} G_\om(x,\cdot) \om^n.
\]
Using the uniform estimate on $\|v_k\|_{L^\infty(\Om)} \leq C_0$, we complete the proof.
\end{proof}

Next, we provide a uniform $L^{\af}$-estimate on Green's functions for $\af \in \lt(1, \frac{n}{n-1}\rt)$:

\begin{lem}\label{lem:G_Lp_est}
Fix $p > 1$ and $A > 0$.
Let $\Om$ be a K-domain and $\dd V$ be a smooth volume form on $\Om$. 
For any $\af \in (1, \frac{n}{n-1})$, there exists a constant $C_1 > 0$ depending only on $n, p, \af, C_{\Om,p}, A, \Om$ such that for all $\om \in \CK(\Om, \dd V, p, A)$ and for all $x \in \Om$, 
\[
    \int_\Om (-G_\om(x,\cdot))^\af \om^n \leq C_1.
\]
\end{lem}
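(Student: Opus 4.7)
The plan is to bootstrap from the $L^1$-bound of Lemma~\ref{lem:G_L1_est} up to $L^\alpha$ for any $\alpha < n/(n-1)$ by an iteration that gains a factor of $1/n$ of the current exponent at each step. Concretely, I claim the implication: if $\int_\Om (-G_\om(x,\cdot))^{\alpha_0} \om^n \leq M_0$ holds uniformly for all $\om \in \CK(\Om, \dd V, p, A)$ and $x \in \Om$, then for every $\alpha \in (\alpha_0, 1 + \alpha_0/n)$ the analogous bound holds at exponent $\alpha$. Since the recursion $\alpha_{k+1} := 1 + \alpha_k/n$ starting from $\alpha_0 = 1$ satisfies $\alpha_k \nearrow n/(n-1)$, any prescribed $\alpha < n/(n-1)$ is reached after finitely many iterations.

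For one step, fix $x \in \Om$ and $\om \in \CK(\Om, \dd V, p, A)$. A K-domain is Stein, so $\om = \ddc \psi$ globally for a smooth strictly psh $\psi$. After first regularizing $G_\om$ by the truncation $\max(G_\om(x,\cdot), -k)$ (and letting $k \to \infty$ at the end), I solve the Dirichlet problem
\[
    (\ddc \phi)^n = (-G_\om(x,\cdot))^{n(\alpha-1)} \om^n \quad \text{in} \ \Om, \qquad \phi_{|\pl\Om} = 0.
\]
Writing the right-hand side as $(-G_\om)^{n(\alpha-1)} f_\om \dd V$ and applying H\"older with exponents $a_2 = (p-1)/(r-1)$ and $a_1 = a_2/(a_2-1) = (p-1)/(p-r)$ to $\int F^r f_\om^r \dd V = \int F^r f_\om^{r-1}\om^n$ with $F = (-G_\om)^{n(\alpha-1)}$ gives
\[
    \lt\|(-G_\om)^{n(\alpha-1)} f_\om\rt\|_{L^r(\dd V)}^r
    \leq
    \lt(\int_\Om (-G_\om)^{n(\alpha-1) r a_1} \om^n\rt)^{1/a_1}
    \cdot
    \|f_\om\|_{L^p(\dd V)}^{p/a_2}.
\]
The constraint $n(\alpha-1) r a_1 \leq \alpha_0$ rearranges to $r[n(\alpha-1)(p-1) + \alpha_0] \leq \alpha_0 p$, which at $r=1$ is equivalent to $\alpha \leq 1 + \alpha_0/n$; since this holds strictly by assumption, a valid $r > 1$ exists by continuity. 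Definition~\ref{defn:K_domain} then produces $\phi \in \PSH(\Om) \cap \CC^0(\overline\Om)$ with $\|\phi\|_{L^\infty(\Om)} \leq C$ for $C$ depending only on $n, p, A, M_0, C_{\Om,r}$.

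Next, the pluripotential AM--GM inequality (Lemma~\ref{lem:AMGM_mes}) applied to $(\phi, \psi)$ with densities $(-G_\om)^{n(\alpha-1)} f_\om$ and $f_\om$ yields the mixed form bound
\[
    \ddc \phi \w \om^{n-1} \geq (-G_\om)^{\alpha-1} \om^n.
\]
Multiplying by $-G_\om \geq 0$, integrating, and using Green's formula (justified by the same cutoff-and-limit approximation as in the proof of Lemma~\ref{lem:G_L1_est}, since both $\phi$ and $G_\om(x, \cdot)$ vanish on $\pl\Om$) gives
\[
    \int_\Om (-G_\om)^\alpha \om^n
    \leq \int_\Om (-G_\om) \ddc \phi \w \om^{n-1}
    = -\frac{\phi(x)}{n}
    \leq \frac{\|\phi\|_{L^\infty(\Om)}}{n},
\]
which closes the induction step with a uniform constant.

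The delicate point is the interplay between the two measures $\om^n$ and $\dd V$: Definition~\ref{defn:K_domain} and the hypothesis $f_\om \in L^p(\dd V)$ live on the $\dd V$ side, while the Green's function and the inductive hypothesis are formulated against $\om^n$. The H\"older splitting above is exactly what reconciles them, and the algebraic observation that the borderline case $n(\alpha-1) = \alpha_0$ corresponds to $r = 1$ in the K-domain estimate is what forces the iteration to terminate precisely at the sharp exponent $n/(n-1)$.
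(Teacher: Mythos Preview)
Your argument follows the same bootstrapping scheme as the paper: solve an auxiliary Monge--Amp\`ere equation whose density is a power of $-G_\om$, bound its solution via the K-domain estimate and H\"older, then feed this back through the Green representation to gain a factor $1/n$ in the exponent. The iteration $\alpha_{k+1}=1+\alpha_k/n$ and the H\"older splitting between $\om^n$ and $\dd V$ are exactly those in the paper.

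Two small points deserve correction. First, the claim ``a K-domain is Stein, so $\om=\ddc\psi$ globally'' is false in general: the K-domains actually used later (namely $\pi^{-1}(\Om_r)$ in Lemma~\ref{lem:resoln_pscvx_Kdomain}) may contain exceptional divisors. This does not matter, since Lemma~\ref{lem:AMGM_mes} is a local inequality of measures and K\"ahler metrics have local potentials; the paper uses it the same way in the proof of Lemma~\ref{lem:subhar_est}. Second, your Green identity $\int_\Om (-G_\om)\,\ddc\phi\wedge\om^{n-1}=-\phi(x)/n$ is asserted for a $\phi$ that is a priori only in $\PSH(\Om)\cap\CC^0(\overline{\Om})$, so the integration by parts against the singular $G_\om(x,\cdot)$ is not immediate. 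The paper sidesteps this by introducing the (regular) Poisson solution $u$ of $\ddc u\wedge\om^{n-1}=\CG_{\om,L}^{\alpha-1}\om^n$, applying Green's formula to $u$, and bounding $\|u\|_{L^\infty}\le\|\phi\|_{L^\infty}$ exactly as in Lemma~\ref{lem:subhar_est}; this is the clean way to close your step as well.
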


\begin{proof}
Set $\CG_\om(x,\cdot) := -G_\om(x,\cdot)$ and we shall simply denote it by $\CG_\om$ later.
Following the strategy in \cite[Thm.~2.3]{Guedj_To_2024}, we are going to show that if $\int_\Om \CG_\om^{n\bt} \om^n \leq C_1$ for $n \bt \leq \bt'$ then $\int_\Om \CG_\om^\af \om^n \leq C(\af, A, \Om, C_1)$ for $\af < 1 + \frac{\bt'}{n}$.

\smallskip
For $L \geq 1$, consider the following equations with zero Dirichlet boundary conditions: 
\[
    \ddc u \w \om^{n-1} = \CG_{\om,L}^\dt \om^n
    \quad\text{with}\quad
    u_{|\pl \Om} = 0
\]
and 
\[
    (\ddc \vph)^n = \CG_{\om,L}^{n \dt} \om^n
    \quad\text{with}\quad
    \vph_{|\pl \Om} = 0
\]
where $\CG_{\om,L} = \min\{\CG_\om, L\}$ and $\dt$ is a constant to be determined.
Fix $r \in (1,p)$. 
By H\"older inequality, we have 
{\small 
\[
    \int_\Om (\CG_{\om,L}^{n\dt} f_\om)^r \dd V
    = \int_\Om \CG_{\om,L}^{n \dt r} f_\om^{r-1} \om^n 
    \leq \lt(\int_\Om \CG_\om^{n \dt r p'} \om^n\rt)^{1/p'} \lt(\int_\Om f_\om^{p} \dd V \rt)^{\frac{r-1}{p-1}}
    \leq A^{\frac{p(r-1)}{p-1}} \cdot \lt(\int_\Om \CG_\om^{n \dt r p'} \om^n\rt)^{1/p'}
\]
}%
where $p' = \frac{p-1}{p-r}$. 
Put $\dt = \frac{\bt'}{nrp'}$.
Then one obtains 
\[
    \int_\Om (\CG_{\om,L}^{n\dt} f_\om)^r \dd V 
    \leq A^{\frac{p(r-1)}{p-1}} \cdot C_1^{1/p'}.
\]
From the condition of K-domain, we get
\[
    \|\vph\|_{L^\infty(\Om)} 
    \leq C_{\Om,r} \|\CG_{\om,L}^{n\dt} f_\om\|_{L^r(\dd V)}^{1/n}
    \leq C_{\Om,r} \cdot 
    A^{\frac{p(r-1)}{nr(p-1)}} \cdot C_1^{\frac{1}{nrp'}}.
\]
As in the proof of Lemma~\ref{lem:subhar_est}, we have $\|u\|_{L^\infty(\Om)} \leq \|\vph\|_{L^\infty(\Om)}$; hence, 
\[
    C_{\Om,r} \cdot 
    A^{\frac{p(r-1)}{nr(p-1)}} \cdot C_1^{\frac{1}{nrp'}} 
    \geq -u(x) 
    = \int_\Om -G_\om(x,\cdot) \Dt_\om u \om^n 
    = n \int_\Om -G_\om(x,\cdot) \CG_{\om,L}^\dt \om^n 
    \xrightarrow[L \to +\infty]{} n \int_\Om \CG_\om^{1+\dt} \om^n.
\]
For $r \to 1$, one has $p' \to 1$, and one can conclude that, for any $\af < 1 + \frac{\bt'}{n}$, letting $r \in (1,p)$ so that $\af = 1 + \frac{\bt'(p-r)}{nr(p-1)}$, 
\[
    \int_\Om \CG^{\af}_\om \om^n 
    \leq C_{\Om,r} \cdot A^{\frac{p(r-1)}{nr(p-1)}} \cdot C_1^{\frac{(p-r)}{nr(p-1)}}.
\]

\smallskip
From Lemma~\ref{lem:G_L1_est}, starting with $\bt' = 1$, iterating the above argument, we obtain that, for any $\af < \frac{n}{n-1} = \sum_{i=0}^\infty \frac{1}{n^i}$, there exists a constant $C_1$ depending only on $\af$, $A$, $\Om$, $C_{\Om,p}$ such that for all $\om \in K(\Om, \dd V, p, A)$ and for all $x \in \Om$, 
\[
    \int_{\Om} (-G_{\om}(x,\cdot))^\af \om^n \leq C_1 
\]
and this finishes the proof. 
\end{proof}

We next provide a uniformly weighted gradient estimate for Green's functions: 

\begin{lem}\label{lem:G_grad_est}
Let $\om$ be a K\"ahler metric on $\Om$. 
For all $\bt > 0$ and for any $x \in \Om$, 
\[
    \int_{\Om} \frac{|\dd G_\om(x,\cdot)|^2_{\om}}{(-G_\om(x,\cdot) + 1)^{1+\bt}} \om^n 
    = \frac{1}{\bt}.
\]
\end{lem}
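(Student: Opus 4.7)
The plan is to integrate by parts after recognizing the integrand as a Kähler-energy density against a primitive of $(1-t)^{-1-\bt}$ evaluated at $G := G_\om(x,\cdot)$, and to isolate the Dirac singularity at $x$ by excising a small ball. Using $\dd G \w \dc G \w \om^{n-1} = \tfrac{1}{n}|\dd G|^2_\om \om^n$, the integral becomes $n\int_\Om (1-G)^{-1-\bt}\dd G \w \dc G \w \om^{n-1}$. I would then introduce the primitive $\phi(t) := \frac{1}{\bt(1-t)^\bt} - \frac{1}{\bt}$, normalized so that $\phi(0) = 0$, which satisfies $\dd\phi(G) = (1-G)^{-1-\bt}\dd G$.

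Next, I would exhaust $\Om$ by $\Om_\vep := \Om \setminus \overline{\BB_\vep(x)}$ as $\vep \to 0$. On $\Om_\vep$, $G$ is smooth and $\om$-harmonic (since $\Dt_\om G = \dt_x$ is supported at $x$), so $\ddc G \w \om^{n-1} \equiv 0$ there. Using the Kähler condition $\dd\om^{n-1}=0$, the Leibniz identity
\[
\dd\lt[\phi(G)\,\dc G \w \om^{n-1}\rt] = \dd\phi(G)\w\dc G \w \om^{n-1} + \phi(G)\ddc G \w \om^{n-1}
\]
has vanishing second term on $\Om_\vep$, and Stokes' theorem then yields
\[
\int_{\Om_\vep}\dd\phi(G)\w\dc G\w\om^{n-1} = \int_{\pl\Om}\phi(G)\dc G\w\om^{n-1} - \int_{\pl\BB_\vep(x)}\phi(G)\dc G\w\om^{n-1}.
\]

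The outer boundary integral vanishes since $G_{|\pl\Om}=0$ forces $\phi(G)=\phi(0)=0$. The main step is the $\vep \to 0$ limit of the inner boundary integral: $G \to -\infty$ uniformly on $\pl\BB_\vep(x)$, so $\phi(G) \to -\tfrac{1}{\bt}$; meanwhile, the distributional identity $\ddc G \w \om^{n-1} = \tfrac{1}{n}\dt_x$ combined with Stokes on $\Om \setminus \BB_\vep(x)$ gives $\int_{\pl\BB_\vep(x)}\dc G\w\om^{n-1} = \int_{\pl\Om}\dc G \w \om^{n-1} = \tfrac{1}{n}$ for all small $\vep$. Combining these with monotone convergence on the left (routine since the integrand behaves like $r^{-1+(2n-2)\bt}$ near $x$ and is thus integrable for $\bt > 0$), one obtains $\int_\Om\dd\phi(G)\w\dc G\w\om^{n-1} = \tfrac{1}{n\bt}$, and multiplying by $n$ yields the claim.

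The only genuine technical point will be the flux computation $\int_{\pl\BB_\vep(x)}\dc G\w\om^{n-1} = \tfrac{1}{n}$, which is simply the boundary version of $\Dt_\om G = \dt_x$ and follows from Green's identity applied with $f \equiv 1$ on $\Om$; once this is granted, the remaining argument is a single integration by parts against the carefully chosen primitive $\phi$, and the independence of the answer from $\om$ and $x$ is transparent from the calculation.
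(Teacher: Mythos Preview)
Your proof is correct and is essentially the paper's argument unpacked. The paper applies the Green representation $u(x)=\int_\Om G_\om(x,\cdot)\Dt_\om u\,\om^n$ directly to $u(y)=(-G_\om(x,y)+1)^{-\bt}-1$, notes $u(x)=-1$ and $u_{|\pl\Om}=0$, and integrates by parts once to get $-1=-\bt\int_\Om(1-G)^{-1-\bt}|\dd G|_\om^2\,\om^n$; your primitive $\phi$ is exactly $u/\bt$, and your excision-and-Stokes computation is precisely what underlies that one-line invocation of Green's formula.
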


\begin{proof}
Fix $x \in \Om$ and $\bt > 0$. 
Take $u(y) = (-G_\om(x,y) + 1)^{-\bt} - 1$ which is smooth on $\Om \setminus \{x\}$. 
Note that $u(x) = -1$ and $u_{|\pl \Om} = 0$. 
By Stokes' formula, we have the following
\[
    -1 = u(x) = \int_{\Om} G_\om(x,\cdot) \Dt_{\om} u \om^n
    = -\bt \int_{\Om} \frac{|\dd G_\om(x,\cdot)|^2_\om}{(-G_\om(x,\cdot) + 1)^{1+\bt}} \om^n.
\]
\end{proof}

\subsection{Uniform Sobolev inequality on K-domains}

Based on the estimates built in the previous section, we establish the following uniform Sobolev inequality: 

\begin{thm}\label{thm:loc_Sobolev}
Fix $p > 1$ and $A > 0$.
Let $\Om$ be a K-domain and $\dd V$ be a smooth volume form on $\Om$. 
Then for any $q \in (1,\frac{n}{n-1})$, there is a constant $C_S > 0$, such that for all $\om \in \CK(\Om, \dd V, p, A)$, and for every $u \in \CC^\infty_c(\Om)$, 
\[
    \lt(\int_{\Om} |u|^{2q} \om^n\rt)^{1/q} 
    \leq C_S \int_{\Om} |\dd u|_{\om}^2 \om^n.
\]
\end{thm}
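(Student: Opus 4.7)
The plan is to combine the two Green's function estimates established in this section: the weighted gradient bound of Lemma~\ref{lem:G_grad_est}, which yields a pointwise control of $|u(x)|^2$ by a weighted Dirichlet integral, and the uniform $L^\af$-bound of Lemma~\ref{lem:G_Lp_est} with $\af<n/(n-1)$, which controls the weight once it is raised to a higher power and integrated in $x$. The symmetry $G_\om(x,y)=G_\om(y,x)$, inherited from the self-adjointness of $\Dt_\om$ with respect to $\om^n$, allows the one-sided integrability of Lemma~\ref{lem:G_Lp_est} to be used in either variable.

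\smallskip
\noindent\textbf{Pointwise weighted bound on $|u|^2$.} Fix $u\in\CC^\infty_c(\Om)$; by a standard approximation argument I may assume $u\geq 0$. Since $u$ has compact support, the Green representation together with an integration by parts gives
\[
u(x)=\int_\Om G_\om(x,y)\,\Dt_\om u(y)\,\om^n(y)=-\int_\Om\langle\dd G_\om(x,\cdot),\dd u\rangle_\om\,\om^n.
\]
For a small parameter $\bt>0$ to be fixed later, Cauchy--Schwarz with weight $(-G_\om(x,\cdot)+1)^{1+\bt}$ together with Lemma~\ref{lem:G_grad_est} then yields
\[
|u(x)|^2\leq\frac{1}{\bt}\int_\Om(-G_\om(x,\cdot)+1)^{1+\bt}|\dd u|_\om^2\,\om^n.
\]

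\smallskip
\noindent\textbf{Passage to $L^{2q}$ via Jensen and Fubini.} Introducing the probability measure $\mu:=\|\dd u\|_{L^2(\om^n)}^{-2}|\dd u|_\om^2\,\om^n$ on $\Om$ and applying Jensen's inequality to the convex function $t\mapsto t^q$ (valid since $q\geq 1$) gives
\[
|u(x)|^{2q}\leq\frac{\|\dd u\|_{L^2(\om^n)}^{2(q-1)}}{\bt^q}\int_\Om(-G_\om(x,y)+1)^{(1+\bt)q}|\dd u(y)|_\om^2\,\om^n(y).
\]
Integrating over $x$ and applying Fubini reduces the argument to bounding $\int_\Om(-G_\om(x,y)+1)^{(1+\bt)q}\,\om^n(x)$ uniformly in $y$. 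Choosing $\bt>0$ small enough that $(1+\bt)q<n/(n-1)$, Lemma~\ref{lem:G_Lp_est} applied to the second variable (via the symmetry of $G_\om$) gives precisely such a uniform bound. Assembling everything yields $\int_\Om|u|^{2q}\om^n\leq C\,\|\dd u\|_{L^2(\om^n)}^{2q}/\bt^q$, and taking $q$-th roots produces the claimed Sobolev inequality with $C_S=C^{1/q}/\bt$.

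\smallskip
\noindent\textbf{Main obstacle.} The principal technical point is the delicate interplay between the target exponent $q$ and the Green's function's integrability threshold: the Cauchy--Schwarz/Jensen mechanism introduces the power $(1+\bt)q$ on $-G_\om$, which must be kept strictly below the critical value $n/(n-1)$ of Lemma~\ref{lem:G_Lp_est}. The strict inequality $q<n/(n-1)$ in the hypothesis is exactly what allows such a $\bt>0$ to be chosen, with $\bt\to 0$ (and the Sobolev constant degenerating) as $q$ approaches the endpoint. The reduction to $u\geq 0$ and the absence of boundary contributions in the two integration by parts are routine given that $u\in\CC^\infty_c(\Om)$.
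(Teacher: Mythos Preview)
Your proof is correct and follows essentially the same route as the paper: Green's representation, weighted Cauchy--Schwarz combined with Lemma~\ref{lem:G_grad_est} to get a pointwise bound on $|u(x)|^2$, then passage to $L^{2q}$ and an appeal to Lemma~\ref{lem:G_Lp_est} (via the symmetry of $G_\om$) with $(1+\bt)q<\frac{n}{n-1}$. The only cosmetic difference is that the paper uses the generalized Minkowski inequality where you use Jensen's inequality with the probability measure $\mu$ followed by Fubini; these two devices are equivalent here and lead to the same bound. Your reduction to $u\geq 0$ is unnecessary (the pointwise Cauchy--Schwarz step already handles signed $u$), but it does no harm.
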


\begin{proof}
We follow the method in \cite[Sec.~5]{Guo_Phong_Song_Sturm_2023_Sobolev} to conclude. 
For any $u \in \CC^\infty_c(\Om)$, by Stokes' formula,
\[
    u(x) = \int_{\Om} G_\om(x, \cdot) \Dt_\om u \om^n
    = n \int_{\Om} \dd (-G_\om(x, \cdot)) \w \dc u \w  \om^{n-1}.
\]
Using H\"older inequality and Lemma~\ref{lem:G_grad_est}, we have 
\begin{align*}
    |u(x)| 
    &\leq \int_{\Om} |\dd G_\om(x,\cdot)|_\om |\dd u|_\om \om^n
    = \int_{\Om} \frac{|\dd G_\om(x,\cdot)|_\om}{(-G_\om(x,\cdot)+1)^{(1+\bt)/2}} \cdot (-G_\om(x,\cdot)+1)^{(1+\bt)/2} |\dd u|_\om \om^n\\
    &\leq \lt(\int_{\Om} \frac{|\dd G_\om(x,\cdot)|_\om^2}{(-G_\om(x,\cdot)+1)^{(1+\bt)}} \om^n\rt)^{1/2}
    \lt(\int_{\Om} (-G_\om(x,\cdot)+1)^{(1+\bt)} |\dd u|_\om^2 \om^n\rt)^{1/2}\\
    &\leq \lt(\frac{1}{\bt}\rt)^{1/2} \lt(\int_{\Om} (-G_\om(x,\cdot)+1)^{(1+\bt)} |\dd u|_\om^2 \om^n\rt)^{1/2}.
\end{align*}
Fix $\bt \in (0,1)$ such that $(1+\bt) q < \frac{n}{n-1}$. 
Then 
\[
    |u(x)|^{2q} \leq \frac{1}{\bt^q} \cdot \lt(\int_\Om (-G_\om(x,\cdot)+1)^{(1+\bt)} |\dd u|_\om^2 \om^n\rt)^{q},
\]
and thus, by the generalized Minkowski inequality,
\begin{equation}\label{eq:generalized_Minkowski}
\begin{split}
    \lt(\int_{\Om} |u|^{2q} \om^n\rt)^{1/q} 
    &\leq \frac{1}{\bt} \lt\{\int_{x \in \Om} \lt(\int_{y \in \Om} (-G_\om(x,y)+1)^{(1+\bt)} |\dd u(y)|_\om^2 \om^n(y)\rt)^{q} \om^n(x) \rt\}^{1/q}\\
    &\leq \frac{1}{\bt} \int_{y \in \Om} |\dd u(y)|_\om^2 \lt(\int_{x \in \Om} (-G_\om(x,y)+1)^{(1+\bt)q} \om^n(x)\rt)^{1/q} \om^n(y).
\end{split}    
\end{equation}
Recall that from Lemma~\ref{lem:G_Lp_est}, for any $\af \in (0, \frac{n}{n-1})$, there exists a constant $C_1' > 0$, which does not depend on $\om \in K(\Om, \dd V, p, A)$ and $x \in \Om$, such that
\begin{equation}\label{eq:thm_loc_Sob_G_Lp}
    \int_{\Om} (-G_\om(x,\cdot) + 1)^{\af} \om^n \leq C_1'. 
\end{equation}
Combining \eqref{eq:generalized_Minkowski} and \eqref{eq:thm_loc_Sob_G_Lp}, we obtain the following uniform Sobolev inequality as desired
\begin{align*}
    \lt(\int_{\Om} |u|^{2q} \om^n\rt)^{1/q} 
    &\leq \frac{(C_1')^{1/q}}{\bt} \cdot \int_{\Om} |\dd u|_\om^2 \om^n.
\end{align*}
\end{proof}

\subsection{Uniform Sobolev inequality on compact hermitian manifolds}
Consider now $X$ an $n$-dimensional compact complex manifold, and $\dd V$ a smooth volume on $X$. 
Let $(\Om_i)_{i \in I}$ be a finite collection of K-domains covering $X$, and let $(\chi_i)_{i \in I}$ to be a partition of unity subordinating to $(\Om_i)_i$. 

\begin{defn}
Let $\CH((\Om_i)_i, (\chi_i)_i, \dd V, p, A_1, A_2, p', A_3)$ be a collection of hermitian metrics $\om$ on $X$, satisfying the following: 
\begin{itemize}
    \item 
    for each $i$, there exists $\om_{i,K} \in \CK(\Om_i, \dd V, p, A_1)$ such that $A_2^{-1} \om_{i,K} \leq \om \leq A_2 \om_{i,K}$;

    \item 
    $p' > \frac{q}{q-1}$ and $\|\dd \chi_i\|_{L^{2p'}(\Om_i, \om)} \leq A_3$ for all $i \in I$, where $q \in \lt(1, \frac{n}{n-1}\rt)$.
\end{itemize}
\end{defn}

\begin{thm}\label{thm:global_Sobolev}
Let $X$ be a compact complex manifold, $(\Om_i)_{i \in I}$ be a finite collection of K-domains covering $X$, and $\dd V$ be a smooth volume form on $X$ and $(\chi_i)_{i \in I}$ to be a partition of unity subordinating to $(\Om_i)_i$. 
For any $q \in \lt(1,\frac{n}{n-1}\rt)$, there exists $C_S>0$ such that for all $\om \in \CH((\Om_i)_i, (\chi_i)_i, \dd V, p, A_1, A_2, p', A_3)$, 
\[
    \forall u \in L^2_1(X,\om),
    \quad
    \lt(\int_X |u|^{2q} \om^n\rt)^{1/q} 
    \leq C_S \lt(\int_X |\dd u|_\om^2 \om^n + \int_X |u|^2 \om^n\rt).
\]
\end{thm}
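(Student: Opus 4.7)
The plan is to reduce the global Sobolev inequality to the local ones on each K-domain $\Omega_i$ by cutting off with the partition of unity, and to handle the boundary terms $|\dd\chi_i|$ by H\"older interpolation and an absorption argument. By density we may assume $u \in \CC^\infty(X)$.

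First, for each $i \in I$, the function $\chi_i u$ lies in $\CC^\infty_c(\Omega_i)$. Applying Theorem~\ref{thm:loc_Sobolev} to $\omega_{i,K} \in \CK(\Omega_i, \dd V, p, A_1)$ and transferring norms via the quasi-isometry $A_2^{-1}\omega_{i,K} \leq \omega \leq A_2 \omega_{i,K}$ (which controls both the volume forms $\omega_{i,K}^n$ vs.\ $\omega^n$ and the pointwise gradient norms $|\cdot|_{\omega_{i,K}}$ vs.\ $|\cdot|_\omega$ by powers of $A_2$), one obtains a uniform constant $C_1 > 0$ depending only on $n,p,q,A_1,A_2,\Omega_i$ such that
\[
    \lt(\int_{\Omega_i} |\chi_i u|^{2q} \omega^n\rt)^{1/q} \leq C_1 \int_{\Omega_i} |\dd(\chi_i u)|_\omega^2 \omega^n.
\]
From $u = \sum_i \chi_i u$, the triangle and Cauchy--Schwarz inequalities give $\|u\|_{L^{2q}}^2 \leq |I| \sum_i \|\chi_i u\|_{L^{2q}}^2$. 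Expanding $|\dd(\chi_i u)|_\omega^2 \leq 2\chi_i^2 |\dd u|_\omega^2 + 2|u|^2 |\dd\chi_i|_\omega^2$ and using $\sum_i \chi_i^2 \leq \sum_i \chi_i = 1$, one arrives at
\[
    \|u\|_{L^{2q}(X,\omega)}^2 \leq 2|I| C_1 \int_X |\dd u|_\omega^2 \omega^n + 2|I| C_1 \sum_{i\in I} \int_X |u|^2 |\dd\chi_i|_\omega^2 \omega^n.
\]

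The crucial step is to bound the boundary terms. The hypothesis $p' > q/(q-1)$ guarantees that $a := p'/(p'-1)$ satisfies $1 < a < q$. Applying H\"older's inequality with conjugate exponents $(a, p')$ and invoking $\|\dd\chi_i\|_{L^{2p'}(\Omega_i,\omega)} \leq A_3$ yields
\[
    \int_X |u|^2 |\dd\chi_i|_\omega^2 \omega^n \leq A_3^2 \|u\|_{L^{2a}(X,\omega)}^2.
\]
Since $1 < a < q$, the standard interpolation $\|u\|_{L^{2a}} \leq \|u\|_{L^2}^{1-\theta} \|u\|_{L^{2q}}^{\theta}$ holds for some $\theta \in (0,1)$, and Young's inequality gives, for every $\delta > 0$, a constant $C_\delta > 0$ with
\[
    \|u\|_{L^{2a}}^2 \leq \delta \|u\|_{L^{2q}}^2 + C_\delta \|u\|_{L^2}^2.
\]

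Combining everything and choosing $\delta$ small enough so that $4|I|^2 C_1 A_3^2 \delta \leq 1/2$, the resulting $\delta\|u\|_{L^{2q}}^2$ term is absorbed into the left-hand side, producing the claimed inequality with $C_S$ depending only on $n,p,q,A_1,A_2,A_3,p',|I|$ and the domains $\Omega_i$. The main obstacle is the choice of exponents in this last step: the assumption $p' > q/(q-1)$ is used precisely to push $a$ strictly below $q$ so that the interpolation exponent $\theta$ is strictly less than $1$, which in turn is what enables Young's inequality to produce a genuinely absorbable term rather than a circular bound.
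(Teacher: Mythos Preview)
Your proof is correct and follows essentially the same route as the paper: localize via the partition of unity, apply the local Sobolev inequality on each $\Omega_i$ after transferring norms through the quasi-isometry with $\omega_{i,K}$, control the cross term $|u|^2|\dd\chi_i|_\omega^2$ by H\"older against $\|\dd\chi_i\|_{L^{2p'}}$, interpolate the resulting intermediate $L^{2a}$-norm between $L^2$ and $L^{2q}$ (using $p'>q/(q-1)$ to ensure $a<q$), and absorb. The paper writes the interpolation with a parameter $\varepsilon$ rather than invoking Young's inequality explicitly, but the mechanism is identical.
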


\begin{proof}
Fix $q \in \lt(1,\frac{n}{n-1}\rt)$ and $p' > \frac{q}{q-1}$. 
For $\om \in \CH((\Om_i)_i, \dd V, p, A_1, A_2, p', A_3)$, for each $i$, there is $\om_{i,K} \in \CK(\Om_i, \dd V, p, A_1)$ such that $A_2^{-1} \om_{i,K} \leq \om \leq A_2 \om_{i,K}$. 
By H\"older and interpolation inequalities, one can derive the following
{\small
\begin{align*}
    &A_2^{-(n-1)} \int_{\Om_i} |\dd (\chi_i u)|_{\om_{i,K}}^2 \om_{i,K}^n
    \leq \int_{\Om_i} |\dd (\chi_i u)|_\om^2 \om^n 
    \leq 2 \int_{\Om_i} |\dd u|_\om^2 \om^n + 2 \int_{\Om_i} |\dd \chi_i|_\om^2 |u|^2 \om^n\\
    &\leq 2\int_{\Om_i} |\dd u|_\om^2 \om^n + 2\|\dd \chi_i\|_{L^{2p'}(\Om_i,\om)}^2  \|u\|_{L^{2q'}(\Om_i, \om^n)}^2 
    \leq 2 \int_{\Om_i} |\dd u|_\om^2 \om^n + 4 A_3 
    \lt(\vep^2 \|u\|_{L^{2q}(\Om_i, \om)}^2 + \vep^{-2\bt} \|u\|_{L^2(\Om_i, \om)}^2\rt)
\end{align*}
}%
where $1/p' + 1/q' = 1$ and $\bt = (\frac{1}{2} - \frac{1}{2q'})/(\frac{1}{2q'} - \frac{1}{2q})$ and all $\vep > 0$.
From Theorem~\ref{thm:loc_Sobolev}, we derive for some uniform $C = C(|I|, A_2, q) > 0$
\begin{align*}
    \lt(\int_X |u|^{2q} \om^n\rt)^{1/q} 
    &\leq C \sum_{i \in I} \lt(\int_{\Om_i} |\chi_i u|^{2q} \om_{i,K}^n\rt)^{1/q}
    \leq C \sum_{i \in I} \lt(\int_{\Om_i} |\dd (\chi_i u)|_{\om_{i,K}}^2 \om_{i,K}^n\rt)\\ 
    &\leq C'_S \lt[\int_X |\dd u|_\om^2 \om^n + \vep^{-2\bt} \int_X |u|^2 \om^n + \vep^2 \lt(\int_X |u|^{2q} \om^n\rt)^{1/q}\rt],
\end{align*}
where $C_S' = C_S'(|I|, A_2, q, A_3) > 0$. 
Then one can complete the proof by taking $\vep^2 < 1/C_S'$. 
\end{proof}

\section{Singular Gauduchon metrics}\label{sec:SGM}
This section aims to prove the existence result of bounded Gauduchon metrics and establish uniform a priori estimates for its approximations (cf. Theorem~\ref{bigthm:approx_and_conv}). 
A key to building a priori estimates is a uniform Sobolev inequality obtained in the previous section. 

\subsection{Uniform Sobolev and Poincar\'e inequalities}
We shall show uniform Sobolev and Poincar\'e inequalities along a perturbed family of hermitian metrics on a resolution of a singular variety in this section. 

\smallskip
We recall a powerful $L^\infty$-estimate of solutions to complex Monge--Amp\`ere equations, originally established by Ko{\l}odziej \cite{Kolodziej_1998} on strongly pseudoconvex domains in $\BC^n$.
The following refined version of the existence result and $L^\infty$-estimate can be found in the combination of \cite[Thm.~A]{Guedj_Guenancia_Zeriahi_2023}, \cite[Thm.~3.1]{Pan_2023} and \cite[Thm.~3.6]{GGZ_2024}. 

\begin{thm}\label{thm:Kolodziej}
Let $X$ be an $n$-dimensional locally irreducible reduced Stein space. 
Consider $\Om = \{\rho < 0\} \Subset X$ a strongly pseudoconvex domain and $\om_\Om$ a hermitian metric defined near the closure of $\Om$. 
Fix $0 \leq f \in L^p(\Om,\om_\Om^n)$. 
There exists a solution $\vph \in \PSH(\Om) \cap \CC(\overline{\Om})$ to the following complex Monge--Amp\`ere equation
\[
    (\ddc \vph)^n = f \om_\Om^n 
    \quad\text{with}\quad
    \vph_{|\pl\Om} = 0.
\]
Moreover, there is a constant $C_{\Om,p} = C(n, p, \Om, \om^n) > 0$ such that
\[
    \|\vph\|_{L^\infty(\Om)} \leq C_{\Om,p} \|f\|_{L^p(\Om,\om_\Om^n)}^{1/n}.
\]
\end{thm}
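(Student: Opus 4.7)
The plan is to combine the three cited references, each handling a distinct ingredient. For the existence part, I would invoke \cite[Thm.~A]{Guedj_Guenancia_Zeriahi_2023}: the strategy there is to pass to a log resolution $\pi : \widetilde{\Om} \to \Om$, solve a regularized family of complex Monge--Amp\`ere equations on the smooth space $\widetilde{\Om}$ with zero boundary data, and descend to $\Om$ by compactness of bounded (quasi-)psh families together with the weak continuity of the Monge--Amp\`ere operator along monotone sequences. Since $\Om$ is locally irreducible, psh functions on $\widetilde{\Om}$ push forward to psh functions on $\Om$ by the Grauert--Remmert extension recalled in \eqref{eq:usc_ext}; and since exceptional fibers of $\pi$ are pluripolar, the descent is compatible with the Monge--Amp\`ere measure.

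For the $L^\infty$-bound, my approach would follow the auxiliary Monge--Amp\`ere scheme introduced in \cite{Guo_Phong_Sturm_2024} and adapted to singular pseudoconvex domains in \cite[Thm.~3.1]{Pan_2023} and \cite[Thm.~3.6]{GGZ_2024}. The idea is to introduce an auxiliary solution $u \in \PSH(\Om) \cap \CC(\overline{\Om})$ of $(\ddc u)^n = c_\vph e^{-\af (\vph - \sup_\Om \vph)} \om_\Om^n$ with $u_{|\pl\Om} = 0$, where $\af > 0$ and $c_\vph$ is a normalizing constant chosen so that the right-hand side has total mass controlled by $\|f\|_{L^p}$. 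Integration by parts, combined with the generalized AM--GM inequality (cf.\ Lemma~\ref{lem:AMGM_mes}) applied to $(\ddc u)^n$ and $(\ddc \vph)^n = f\om_\Om^n$, yields an integral bound on $(-\vph)$ weighted by $(\ddc u)^n$; bootstrapping this against the already-known bound on $\|u\|_{L^\infty(\Om)}$ (itself obtained by the same procedure, starting from the trivial $\vph$) produces $\sup_\Om (-\vph) \leq C_{\Om,p} \|f\|_{L^p}^{1/n}$.

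The main obstacle is that the classical pluripotential capacity technique of Ko{\l}odziej does not transport cleanly to the singular setting, because potentials pulled back under $\pi$ need not satisfy uniform capacity comparisons near the exceptional divisor, and the singular base carries no ambient K\"ahler class. The PDE scheme above sidesteps this by requiring only an integral comparison principle and a solvability statement for the auxiliary Monge--Amp\`ere equation, both of which remain valid on $\Om$ once one restricts attention to continuous bounded psh potentials produced by the existence step. The explicit dependence of the constant $C_{\Om,p}$ on $n$, $p$, $\Om$ and $\om_\Om^n$ is tracked through the dependence of the auxiliary $L^\infty$-estimate on the total volume and on a strongly psh exhaustion of $\Om$, both of which are fixed data of a strongly pseudoconvex domain.
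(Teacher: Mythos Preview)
The paper does not give its own proof of this theorem: it is stated as a recall of known results, with the sentence preceding the statement pointing to \cite[Thm.~A]{Guedj_Guenancia_Zeriahi_2023}, \cite[Thm.~3.1]{Pan_2023}, and \cite[Thm.~3.6]{GGZ_2024} for the combined existence and $L^\infty$-estimate. Your proposal correctly identifies these same three references and sketches how they fit together (existence via resolution and descent from \cite{Guedj_Guenancia_Zeriahi_2023}, the $L^\infty$-bound via the auxiliary Monge--Amp\`ere comparison scheme adapted to the singular setting in \cite{Pan_2023, GGZ_2024}), so there is nothing to compare against beyond confirming that your outline matches the content of the cited sources.
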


We next review a classical integrability fact:  

\begin{lem}\label{lem:Lp_pullback_met}
Let $X$ be a reduced complex analytic space of pure dimension $n \geq 1$ and $\pi: \hX \to X$ be a log resolution of singularities. 
Let $D_i$'s be irreducible components of the exceptional divisor of $\pi$. 
For each $i$, fix $s_i \in H^0(\hX,\CO(D_i))$ a section cutting out $D_i$ and $h_i$ a hermitian metric on $\CO(D_i)$.
Let $\dd V$ be a smooth volume form defined on an open set $U \subset X$.
For any compact subset $K \subset U$, there exist positive constants $\af_i = \af_i(K) > 0$ and $C_K > 0$ such that on $K$,
\[
    \prod_i |s_i|_{h_i}^{2\af_i} \dd V \leq C_K \pi^\ast \om^n.
\]
In particular, $f = \frac{\dd V}{\pi^\ast\om^n} \in L^l_{\loc}(U, \dd V)$ for any $l \in (1, l_0)$ where $l_0 = 1 + \frac{1}{\max_i \af_i}$. 
\end{lem}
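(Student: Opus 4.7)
The plan is to reduce to a local statement on $\hX$ and then apply a {\L}ojasiewicz-type argument. Since $\pi$ is a log resolution, the exceptional divisor $D = \sum D_i$ is simple normal crossing, so around any point $p \in \pi^{-1}(K)$ I can choose holomorphic coordinates $(z_1, \ldots, z_n)$ on $\hX$ in which $D$ is locally defined by $\{z_{i_1} \cdots z_{i_m} = 0\}$ for an appropriate subset of indices. Covering the compact set $\pi^{-1}(K)$ by finitely many such charts reduces the desired inequality to a local estimate in each chart.

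In such a chart, $\pi^* \om$ is a smooth (in fact real-analytic) semi-positive $(1,1)$-form on $\hX$, strictly positive outside $D$. Equivalently, the function
\[
g := \pi^*\om^n \,\Big/\, \bigwedge_k \tfrac{\ii}{2}\, \dd z_k \wedge \dd \bar z_k
\]
is a non-negative real-analytic function whose zero locus coincides with $D$ in the chart. By the {\L}ojasiewicz inequality for real-analytic functions combined with the SNC structure of $D$, one obtains on a smaller relatively compact neighborhood of $p$ a lower bound $g \geq c_p \prod_j |z_{i_j}|^{2\beta_{i_j}}$ for some $\beta_{i_j} > 0$ and $c_p > 0$. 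Since $|s_i|_{h_i}$ and the local defining function $z_i$ of $D_i$ differ by a smooth non-vanishing factor on compact sets, a finite subcover and the choice $\af_i := \max_p \beta_i(p)$ then upgrade these to the global estimate
\[
\prod_i |s_i|_{h_i}^{2\af_i}\, \dd V \leq C_K\, \pi^* \om^n,
\]
once the bounded smooth factor comparing $\dd V$ to a Euclidean reference volume form has been absorbed into $C_K$.

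For the integrability conclusion, I would identify $f$ with a function on $\hX$ via the biholomorphism $\pi: \hX \setminus D \to X^{\reg} \cap U$, whence the previous step gives $f \leq C_K'\prod_i |s_i|_{h_i}^{-2\af_i}$. The claim then follows from Fubini together with the elementary one-variable statement that $|z|^{-2\gamma}$ is locally integrable on $\BC$ iff $\gamma < 1$; the ``$+1$'' in $l_0 = 1 + 1/\max_i \af_i$ records the extra factor of $|s_i|^2$ that the measure $\dd V$ itself contributes upon pullback to $\hX$, reflecting the vanishing of the Jacobian of $\pi$ along each $D_i$.

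The main obstacle is the second step: extracting a {\L}ojasiewicz bound with \emph{finite and reasonable} exponents $\beta_{i_j}$ in each SNC direction, rather than just some qualitative bound with an uncontrolled exponent. While the positivity and finiteness of some exponent follow from real-analyticity and compactness, pinning down the $\af_i$ cleanly naturally calls for either an analytic Weierstrass preparation in each $z_{i_j}$ separately, or a direct blow-up-by-blow-up computation following the construction of $\pi$ as a composition of smooth blowups.
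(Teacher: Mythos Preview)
Your outline for the first inequality is essentially correct, with one fix: the form $\pi^\ast\om$ is not real-analytic in general, since $\om$ is only assumed smooth. What is true is that on a compact set, $\pi^\ast\om$ is uniformly comparable to $\pi^\ast\om_{\mathrm{std}}$, where $\om_{\mathrm{std}}$ is the restriction of the Euclidean metric under a local embedding $X\hookrightarrow\BC^N$; the top power of the latter is a sum of squared moduli of holomorphic Jacobian minors, hence real-analytic, and {\L}ojasiewicz applies after this reduction. Your worry about ``finite and reasonable'' exponents is unfounded: {\L}ojasiewicz always yields \emph{some} finite exponent on compacta, and that is all the lemma needs. The paper does not spell this out and simply cites the inequality as standard; the blow-up-by-blow-up alternative you mention is closer to what the cited reference does.

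Your account of the integrability step contains a genuine error. Despite the phrasing ``$U\subset X$'' in the statement, the volume form $\dd V$ lives on the resolution $\hX$ (compare the proof and the usage in Lemma~\ref{lem:resoln_pscvx_Kdomain}). There is therefore no Jacobian factor to invoke: $\dd V$ is nondegenerate on $\hX$, and plugging your bound $f\leq C\prod_i|s_i|^{-2\af_i}$ directly into $\int f^l\,\dd V$ forces $\af_i l<1$, i.e.\ $l<1/\max_i\af_i$, not $1+1/\max_i\af_i$. The paper obtains the ``$+1$'' by a different, purely algebraic mechanism: one computes
\[
\int_K f^l\,\pi^\ast\om^n \;=\; \int_K f^{l-1}\,\dd V \;\leq\; C\int_K \prod_i |s_i|_{h_i}^{-2\af_i(l-1)}\,\dd V,
\]
finite precisely when $\af_i(l-1)<1$. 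The shift from $l$ to $l-1$ is the source of the ``$+1$''. (Equivalently, the conclusion should be read as $f\in L^l_{\loc}(\pi^\ast\om^n)$, or $f\in L^{l-1}_{\loc}(\dd V)$; this is exactly how it is used downstream.) Under your reading with $\dd V$ on $X$, the ratio $f=\dd V/\om^n$ would already be bounded on $X^\reg$ and the integrability claim would be vacuous, so that interpretation cannot be what is meant.
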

\begin{proof}
The first assertion is a standard fact in several complex variables (see e.g. \cite[Proof of Thm.~1.1]{Chu_McCleerey_2021}).
We now verify the integrability of $f^l$. 
Let $E$ be the exceptional divisor of $\pi$. 
Fix a compact subset $K \subset \hX$. 
By the estimate between $\dd V$ and $\pi^\ast \om^n$, we have 
\[
    \int_{K \setminus D} \lt(\frac{\dd V}{\pi^\ast \om^n}\rt)^l \pi^\ast \om^n
    = \int_{K \setminus D} \lt(\frac{\dd V}{\pi^\ast \om^n}\rt)^{l-1} \dd V
    \leq \int_{K \setminus D} \lt(\frac{C_K^{l-1}}{\prod_i |s_i|^{2\af_i(l-1)}}\rt) \dd V.
\]
The right hand side is finite if $\af_i (l-1) < 1$ for all $i$; namely, $l < 1 + \frac{1}{\max_i(\af_i)} =: l_0 $.
\end{proof}

With the previous preparations, we now prove the resolution of generic choices of strongly pseudoconvex domains are K-domains.

\begin{lem}\label{lem:resoln_pscvx_Kdomain}
Let $X$ be an $n$-dimensional locally irreducible reduced Stein space, and let $\Om := \{\rho < 0\} \Subset X$ be a strongly pseudoconvex domain. 
Let $\pi: \hX \to X$ be a log resolution of singularities. 
Then $\pi^{-1}(\Om_r)$ is a K-domain for almost every $r \in (\min_\Om\rho, 0)$ where $\Om_r := \set{x \in \Om}{\rho(x) < r}$.
\end{lem}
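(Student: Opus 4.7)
The plan is to descend the complex Monge--Amp\`ere problem on $\pi^{-1}(\Om_r)$ to the singular base $\Om_r$ via the bimeromorphic map $\pi$, solve it there by Theorem~\ref{thm:Kolodziej}, and then pull the solution back. The ``almost every $r$'' clause is used only to arrange smooth boundary, which is part of the definition of K-domain; this will follow from a Sard-type argument for $\pi^\ast\rho$ combined with the observation that $\Sing(X)\cap\overline\Om$ is a finite set (since a compact analytic subset of the Stein space $X$ is necessarily finite).

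Concretely, fix a hermitian metric $\om_\Om$ defined near $\overline\Om$ and a smooth volume form $\dd V$ on $\hX$. Given $f\in L^p(\pi^{-1}(\Om_r),\dd V)$ with $p>1$, set $\tilde f := f\cdot(\dd V/\pi^\ast\om_\Om^n)$ on $\pi^{-1}(\Om_r)\setminus D$, and regard $\tilde f$ as a function on $\Om_r\setminus\Sing(X)$ via the biholomorphism $\pi|_{\pi^{-1}(\Om_r)\setminus D}$. A H\"older computation yields
\[
\int_{\Om_r}\tilde f^q\,\om_\Om^n
= \int_{\pi^{-1}(\Om_r)\setminus D} f^q\bigl(\dd V/\pi^\ast\om_\Om^n\bigr)^{q-1}\dd V
\leq \|f\|_{L^p(\dd V)}^q \bigl\|\dd V/\pi^\ast\om_\Om^n\bigr\|_{L^{(q-1)p/(p-q)}(\dd V)}^{q-1},
\]
and Lemma~\ref{lem:Lp_pullback_met} gives $\dd V/\pi^\ast\om_\Om^n \in L^l(\dd V)$ for some $l>1$, so for $q>1$ sufficiently close to $1$ we obtain $\tilde f \in L^q(\Om_r,\om_\Om^n)$ with $\|\tilde f\|_{L^q}\leq C\|f\|_{L^p}$. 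Since $\Om_r=\{\rho<r\}$ is strongly pseudoconvex in the locally irreducible reduced Stein space $X$ (defining function $\rho-r$), Theorem~\ref{thm:Kolodziej} produces $\psi\in\PSH(\Om_r)\cap\CC^0(\overline{\Om_r})$ with $(\ddc\psi)^n=\tilde f\,\om_\Om^n$, $\psi|_{\partial\Om_r}=0$, and $\|\psi\|_{L^\infty}\leq C_{\Om_r,q}\|\tilde f\|_{L^q}^{1/n}$. Setting $\varphi := \pi^\ast\psi$ gives a bounded continuous psh function on $\pi^{-1}(\overline{\Om_r})$ vanishing on $\partial\pi^{-1}(\Om_r)$ with $\|\varphi\|_{L^\infty}\leq C'\|f\|_{L^p(\dd V)}^{1/n}$, as required for the K-domain.

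The main obstacle I anticipate is verifying the identity $(\ddc\varphi)^n = f\,\dd V$ as Borel measures on \emph{all} of $\pi^{-1}(\Om_r)$, and not merely on the open set $\pi^{-1}(\Om_r)\setminus D$ where $\pi$ is biholomorphic and the equality is tautological from the smooth change of variables. The resolution is the Bedford--Taylor--Demailly principle: the complex Monge--Amp\`ere measure of a bounded psh function assigns no mass to pluripolar sets. Applied to the pluripolar exceptional divisor $D$, both $(\ddc\varphi)^n$ and $f\,\dd V$ carry zero mass on $D$, so their coincidence on $\pi^{-1}(\Om_r)\setminus D$ extends to the whole domain. A secondary technicality is selecting $r$ so that $\partial\pi^{-1}(\Om_r)=\{\pi^\ast\rho=r\}$ is genuinely smooth: since $\rho$ is only $\CC^2$ and a direct Morse--Sard argument is unavailable when $n\geq 2$, I would mollify $\rho$ (or exploit its strict plurisubharmonic structure) and observe that only the finite set $\rho(\Sing(X)\cap\overline\Om)$ of singular values must be removed, which leaves a full-measure subset of $(\min_\Om\rho,0)$.
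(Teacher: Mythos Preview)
Your main strategy---descend the Monge--Amp\`ere problem to $\Om_r$ via $\pi$, invoke Theorem~\ref{thm:Kolodziej}, and pull back---is exactly the paper's, and your H\"older computation with Lemma~\ref{lem:Lp_pullback_met} matches the paper's argument (with different exponent labels). Your remark that both $(\ddc\vph)^n$ and $f\,\dd V$ put no mass on the pluripolar divisor $D$, so that the equation on $\pi^{-1}(\Om_r)\setminus D$ extends across $D$, is correct and is precisely what the paper leaves implicit.

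The one genuine error is in your smooth-boundary argument. The claim that $\Sing(X)\cap\overline\Om$ is finite fails: this set is compact, but it is \emph{not} an analytic subvariety of $X$ (it generally meets $\partial\Om$), so the principle ``compact analytic subsets of a Stein space are finite'' does not apply. For instance, $X=\{x^2+y^3=0\}\subset\BC^3_{x,y,z}$ is locally irreducible and Stein with $\Sing(X)$ equal to the $z$-axis, so $\Sing(X)\cap\overline\Om$ can be a compact arc. Moreover, even if this set were finite, removing the values $\rho(\Sing(X)\cap\overline\Om)$ would not make $\{\pi^\ast\rho=r\}$ smooth: you still need $r$ to be a regular value of $\pi^\ast\rho$ on $\hX\setminus D\simeq X^\reg$, and strict plurisubharmonicity does not rule out critical points (e.g.\ $|z|^2$ at the origin). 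The paper simply applies Sard's theorem to $\pi^\ast\rho$ on the smooth manifold $\hX$, tacitly treating $\rho$ as smooth; if you want to keep $\rho$ merely $\CC^2$, the clean fix is to first replace $\rho$ by a smooth strictly psh defining function via Richberg approximation and then run Sard, rather than the finiteness argument you propose.
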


\begin{proof}
Set $\wOm_r = \pi^{-1}(\Om_r)$.
We first check $\pl\wOm_r$ is smooth for almost every $r \in (\min_\Om\rho, 0)$.
Note that $\pi^\ast \rho$ is a smooth function defined near the closure of $\wOm$. 
By Sard's theorem, the critical value of $\pi^\ast \rho$ is a measure zero set in $(\min_\Om \rho, 0)$. 
Taking $r > 0$ sufficiently small but not a critical value, we obtain that $\pl \wOm_r$ is smooth on $\hX$.

\smallskip
We now verify the existence of continuous solutions and uniform estimates of solutions to the complex Monge--Amp\`ere equation with $L^p$-densities. 
Let $\om$ be a hermitian metric on $\Om$, and $\dd V$ be a smooth volume form on $\wOm$.
Fix a function $f \in L^p(\wOm_r, \dd V)$ for some $p>1$. 
From Lemma~\ref{lem:Lp_pullback_met}, 
there is a constant $l_0 > 1$ such that $g = \frac{\dd V}{\pi^\ast\om^n}$ belongs to $L^{l-1}(\wOm_r, \dd V)$ for all $l \in (1, l_0)$. 
Fix $l \in (1,l_0)$. 
Take $m = \frac{pl}{p+l-1} > 1$, $p' = p/m = \frac{l+p-1}{l} > 1$ and $q' = \frac{l-1}{m-1} = \frac{l+p-1}{p-1}>1$.
Note that $1 = 1/p' + 1/q'$.
By H\"older inequality, we get 
\begin{equation}\label{eq:pullback_pcvx_K-dom}
    \lt(\int_{\wOm_r} f^m g^m \pi^\ast \om^n\rt)^{\frac{1}{m}} 
    = \lt(\int_{\wOm_r} f^m g^{m-1} \dd V \rt)^{\frac{1}{m}}
    \leq \lt(\int_{\wOm_r} f^{mp'} \dd V\rt)^{\frac{1}{mp'}} \lt(\int_{\wOm_r} g^{(m-1)q'} \dd V\rt)^{\frac{1}{mq'}}.
\end{equation}
Let $D$ be the exceptional divisor of $\pi$. 
The map $\pi: \wOm_r \setminus D \to \Om_r^\reg$ is an isomorphism. 
We still denote by $f$ and $g$ for the trivial extension of $(\pi_{|\wOm_r \setminus D})_\ast f_{|\wOm_r \setminus D}$ and $(\pi_{|\wOm_r \setminus D})_\ast g_{|\wOm_r \setminus D}$ on $\Om_r$, respectively. 
The estimate \eqref{eq:pullback_pcvx_K-dom} ensures that $fg \in L^m(\Om_r, \om^n)$. 
Hence, by Theorem~\ref{thm:Kolodziej}, there exists a solution $\vph' \in \PSH(\Om_r) \cap \CC(\overline{\Om_r})$ to 
\[
    (\ddc \vph')^n = fg \om^n
    \quad\text{with}\quad
    \vph'_{|\pl\Om_r} = 0
\]
and there is a uniform constant $C_{\Om_r,m} >0$ such that 
\[
    \|\vph'\|_{L^\infty(\Om_r)} \leq C_{\Om_r,m} \|fg\|_{L^m(\Om_r,\om^n)}^{1/n} 
    \leq C_{\Om_r,m} C_1^{1/n} \|f\|_{L^p(\wOm_r, \dd V)}^{1/n},
\]
where $C_1 > 0$ is a constant so that $\lt(\int_{\wOm_r} g^{(m-1)q'} \dd V \rt)^{1/mq'} \leq C_1$.
Pulling back everything to $\wOm_r$, we obtain a solution $\vph = \pi^\ast\vph' \in \PSH(\wOm_r) \cap \CC(\overline{\wOm_r})$ solving
\[
    (\ddc \vph)^n = f \dd V
    \quad\text{with}\quad
    \vph_{|\pl\wOm_r} = 0
\]
and $\|\vph\|_{L^\infty(\wOm_r)} \leq C_p \|f\|_{L^p(\wOm_r, \dd V)}^{1/n}$ for some uniform constant $C_p > 0$.
\end{proof}

Combining Theorem~\ref{thm:loc_Sobolev} and Lemma~\ref{lem:resoln_pscvx_Kdomain}, one can derive the following local version of uniform Sobolev inequality: 

\begin{prop}\label{prop:unif_loc_Sobolev_resol}
Suppose that $\{\rho<0\} =: \Om \subset X$ is a strongly pseudoconvex domain.
Let $\pi: \hX \to X$ be a log resolution of singularities and $\dd V$ be a volume form defined near the closure of $\wOm := \pi^{-1}(\Om)$. 
Fix $p>1$ and $A>0$. For almost every $r \in (\min_\Om\rho, 0)$, and for any $q \in \lt(1,\frac{n}{n-1}\rt)$, there exista $C_S > 0$ (depend on $\Om_r$) such that for all $\om \in \CK(\wOm_r, \dd V, p, A)$ and for every $u \in \CC^\infty_c(\wOm_r)$, 
\[
    \lt(\int_{\wOm_r} |u|^{2q} \om^n\rt)^{1/q}
    \leq C_S \int_{\wOm_r} |\dd u|_\om^2 \om^n.
\]
\end{prop}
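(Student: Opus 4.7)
The plan is essentially to combine the two ingredients just established: the uniform Sobolev inequality of Theorem~\ref{thm:loc_Sobolev}, which applies on any K-domain, with the local K-domain property of resolutions of strongly pseudoconvex domains provided by Lemma~\ref{lem:resoln_pscvx_Kdomain}.

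First, I would invoke Lemma~\ref{lem:resoln_pscvx_Kdomain}: for almost every $r \in (\min_\Om \rho, 0)$, the preimage $\wOm_r = \pi^{-1}(\Om_r)$ is a K-domain in $\hX$. More precisely, Sard's theorem applied to $\pi^\ast \rho$ gives that $\pl \wOm_r$ is smooth for a.e. such $r$, and the argument of Lemma~\ref{lem:resoln_pscvx_Kdomain} then produces, for each $p > 1$ and each $f \in L^p(\wOm_r, \dd V)$, a continuous psh solution $\vph$ of $(\ddc \vph)^n = f\,\dd V$ with zero boundary value and $\|\vph\|_{L^\infty(\wOm_r)} \leq C_{\wOm_r,p} \|f\|_{L^p(\wOm_r, \dd V)}^{1/n}$. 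This is exactly the definition of a K-domain (Definition~\ref{defn:K_domain}).

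Second, having identified $\wOm_r$ as a K-domain equipped with the smooth volume form $\dd V$, I would apply Theorem~\ref{thm:loc_Sobolev} with $\Om$ replaced by $\wOm_r$, the same $p > 1$ and $A > 0$, and any $q \in \bigl(1, \tfrac{n}{n-1}\bigr)$. This yields a constant $C_S > 0$, depending on $\wOm_r$ (hence on $r$), on $p$, $A$, $q$, and on $\dd V$, such that for every $\om \in \CK(\wOm_r, \dd V, p, A)$ and every $u \in \CC^\infty_c(\wOm_r)$,
\[
    \lt(\int_{\wOm_r} |u|^{2q}\, \om^n\rt)^{1/q}
    \leq C_S \int_{\wOm_r} |\dd u|_\om^2\, \om^n,
\]
which is precisely the claim.

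There is no real obstacle here beyond checking that the hypotheses of Theorem~\ref{thm:loc_Sobolev} are met, since the heavy lifting (the Green's function bounds of Lemma~\ref{lem:G_L1_est}, Lemma~\ref{lem:G_Lp_est}, the weighted gradient identity of Lemma~\ref{lem:G_grad_est}, and the Minkowski-type argument in Theorem~\ref{thm:loc_Sobolev} itself) has already been done for abstract K-domains. The one point that deserves a brief remark is that the constant $C_S$ necessarily depends on the choice of $r$, because the K-domain constant $C_{\wOm_r,p}$ from Lemma~\ref{lem:resoln_pscvx_Kdomain} depends on the geometry of the sublevel set $\wOm_r$ and on the $L^{(m-1)q'}$-norm of $g = \dd V / \pi^\ast \om^n$ over $\wOm_r$; this is why the statement is phrased for a \emph{fixed} (almost every) $r$ rather than uniformly in $r$.
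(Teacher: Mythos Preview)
Your proposal is correct and matches the paper's approach exactly: the paper simply states that the proposition follows by combining Theorem~\ref{thm:loc_Sobolev} with Lemma~\ref{lem:resoln_pscvx_Kdomain}, and you have spelled out precisely this combination.
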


With the proposition above, we are able to establish uniform Sobolev and Poincar\'e inequalities along a perturbation path of the pullback metric on the desingularization of hermitian varieties: 

\begin{thm}[cf. Theorem~\ref{bigthm:unif_Sobolev}]\label{thm:unif_Sobolev_Poincare}
Let $(X,\om)$ be a compact hermitian variety and let $\pi: \hX \to X$ be a log resolution of singularities. 
Fix a hermitian metric $\homg$ on $\hX$. 
Consider an $\vep$-family of hermitian metrics $\om_\vep := \pi^\ast \om + \vep \homg$ for $\vep \in (0,1]$. 
Then the following properties hold: 
\begin{enumerate}
    \item For every $q \in (1,\frac{n}{n-1})$, there is a constant $C_S > 0$ such that for any $\vep \in (0,1]$, 
    \[
        \forall f \in L^2_1(\hX,\om_\vep), 
        \quad 
        \lt(\int_\hX |f|^{2q} \om_\vep^n\rt)^{1/q} 
        \leq C_S \lt(\int_\hX |\dd f|_{\om_\vep}^2 \om_\vep^n + \int_\hX |f|^2 \om_\vep^n\rt).
    \]
    \item There exists a constant $C_P > 0$ such that for every $\vep \in (0,1]$, 
    \[
        \forall f \in L^2_1(\hX,\om_\vep) \ \,\text{and}\ \, \int_\hX f \om_\vep^n = 0, 
        \quad 
        \int_\hX |f|^2 \om_\vep^n
        \leq C_P \int_\hX |\dd f|_{\om_\vep}^2 \om_\vep^n.
    \]
\end{enumerate}
\end{thm}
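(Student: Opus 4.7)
I reduce both inequalities to the patching Theorem~\ref{thm:global_Sobolev} via the local Proposition~\ref{prop:unif_loc_Sobolev_resol}.

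First, I cover $X$ by finitely many strongly pseudoconvex domains $\Om_i = \{\rho_i < 0\} \Subset U_i$ coming from local embeddings $U_i \hookrightarrow \BC^{N_i}$. By Sard's theorem applied to $\pi^\ast\rho_i$, I can pick regular sub-levels $r_i \in (\min\rho_i, 0)$ such that $\wOm_i := \pi^{-1}(\{\rho_i < r_i\})$ has smooth boundary and is a K-domain by Lemma~\ref{lem:resoln_pscvx_Kdomain}, while the $\wOm_i$ still cover $\hX$. Fix a smooth partition of unity $(\chi_i)$ subordinate to this cover.

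Next, on each $\wOm_i$ I build, for every $\vep \in (0,1]$, a K\"ahler metric $\om_{i,K,\vep}$ uniformly quasi-isometric to $\om_\vep$. Writing $\omstd_i$ for the restriction of the standard K\"ahler form of $\BC^{N_i}$ to $\Om_i$, the forms $\omstd_i$ and $\om$ are quasi-isometric on $\overline{\Om_{i,r_i}}$ since both are smooth hermitian on a compact set. Because a proper bimeromorphic modification of a Stein space is K\"ahler, one can find, on a neighborhood of $\overline{\wOm_i}$, a K\"ahler form $\eta_i$ quasi-isometric to $\homg$. Setting $\om_{i,K,\vep} := \pi^\ast\omstd_i + \vep\eta_i$ then yields a K\"ahler form with $C^{-1}\om_{i,K,\vep} \leq \om_\vep \leq C\om_{i,K,\vep}$, uniformly in $\vep$. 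Fixing a smooth volume $\dd V$ on $\hX$, Lemma~\ref{lem:Lp_pullback_met} produces $p > 1$ and $A_1 > 0$ such that $\|\om_\vep^n/\dd V\|_{L^p(\dd V)} \leq A_1$ uniformly, and the quasi-isometry transfers this bound to $\om_{i,K,\vep}^n/\dd V$. The $L^{2p'}$-control of $|d\chi_i|_{\om_\vep}$ is automatic because $d\chi_i$ is a fixed smooth form and $\om_\vep^n$ is uniformly dominated by $(\pi^\ast\om + \homg)^n$. Thus $\om_\vep \in \CH\bigl((\wOm_i),(\chi_i),\dd V,p,A_1,A_2,p',A_3\bigr)$ with uniform constants, and Theorem~\ref{thm:global_Sobolev} gives~(i).

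For part (ii) I argue by contradiction. If uniform Poincar\'e fails, there exist $\vep_k \in (0,1]$ and $f_k$ with $\int_\hX f_k\,\om_{\vep_k}^n = 0$, $\|f_k\|_{L^2(\om_{\vep_k})} = 1$ and $\|df_k\|_{L^2(\om_{\vep_k})} \to 0$. Passing to a subsequence, $\vep_k \to \vep_\infty \in [0,1]$, and by (i), $\|f_k\|_{L^{2q}(\om_{\vep_k})}$ is uniformly bounded. On every compact $K \Subset \hX \setminus D$ the metrics $\om_{\vep_k}$ are uniformly equivalent to a fixed smooth metric, so Rellich--Kondrachov along a diagonal subsequence produces an a.e.~limit $f \in L^2_{\loc}(\hX \setminus D)$ with $df = 0$; hence $f$ equals some constant $c$ on the connected open set $\hX \setminus D$. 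H\"older combined with the uniform $L^{2q}$ bound and the vanishing of $\om_{\vep_k}$-volumes of shrinking neighborhoods of $D$ controls the contribution of $f_k$ near $D$ uniformly in $k$, so both normalizations pass to the limit on $\hX \setminus D$, giving simultaneously $c \cdot \int_{\hX \setminus D}(\pi^\ast\om + \vep_\infty\homg)^n = 0$ and $c^2 \cdot \int_{\hX \setminus D}(\pi^\ast\om + \vep_\infty\homg)^n = 1$, which is a contradiction since the integral is strictly positive.

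The main obstacle is the construction of the K\"ahler comparison metrics $\om_{i,K,\vep}$ together with the uniform $L^p$-control of the volume density $\om_\vep^n/\dd V$ despite the exceptional divisor $D$ possibly meeting $\partial\wOm_i$; this is precisely where the refined complex Monge--Amp\`ere theory on singular pseudoconvex domains from \cite{Guedj_Guenancia_Zeriahi_2023, Pan_2023, GGZ_2024}, already packaged into Lemma~\ref{lem:resoln_pscvx_Kdomain} and Lemma~\ref{lem:Lp_pullback_met}, becomes indispensable.
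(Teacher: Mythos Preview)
Your overall strategy matches the paper's: localize to K-domains arising as preimages of strongly pseudoconvex charts on $X$, build comparison K\"ahler metrics, and feed everything into Theorem~\ref{thm:global_Sobolev}; then derive Poincar\'e by a Rellich-type contradiction using the uniform Sobolev bound. Part~2 is essentially the paper's argument, just organized slightly differently.

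There is, however, a genuine gap in Part~1. You take the partition of unity $(\chi_i)$ on $\hX$, subordinate to $(\wOm_i)$, and then claim that the $L^{2p'}$-control of $|\dd\chi_i|_{\om_\vep}$ is ``automatic because $\dd\chi_i$ is a fixed smooth form and $\om_\vep^n$ is uniformly dominated''. This reasoning is incorrect: the pointwise norm $|\dd\chi_i|_{\om_\vep}$ involves the \emph{inverse} metric, and since $\pi^\ast\om$ is only semi-positive along $D$, the quantity $|\dd\chi_i|_{\om_\vep}$ blows up as $\vep\to 0$ wherever $\dd\chi_i$ has a nonzero component in $\ker(\pi^\ast\om)$. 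A uniform bound on the volume form does not compensate for this, especially once $p'>q/(q-1)$ is large (which is forced when $q$ is close to $\frac{n}{n-1}$).

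The paper circumvents this by choosing the partition of unity $(\chi_i)$ \emph{downstairs} on $X$, subordinate to $(\Om_i)$, and setting $\widetilde{\chi}_i := \pi^\ast\chi_i$. Then, since $\om_\vep \geq \pi^\ast\om$ as $(1,1)$-forms, one has the pointwise bound
\[
    |\dd\widetilde{\chi}_i|_{\om_\vep}^2 \leq |\dd\widetilde{\chi}_i|_{\pi^\ast\om}^2 = \pi^\ast|\dd\chi_i|_{\om}^2,
\]
which is uniformly bounded, so the $L^{2p'}$-control is genuinely automatic for any $p'$. With this single correction your argument goes through; the construction of the comparison K\"ahler metrics $\om_{i,K,\vep}$ is fine (and equivalent to the paper's use of the fixed closed form $\theta$).
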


\begin{proof}
{\bf Part 1: Uniform Sobolev inequality.}
Let $\pi: \hX \to X$ be a log resolution of singularities. 
Since $X$ is compact, it can be covered by finitely many domains $(U_j)_j$, which are balls centered at some points; namely, for each $j$, there is a local embedding $\iota_j: X \xhookrightarrow[\loc.]{} \BC^N$ such that $U_j = \iota_j(X) \cap \BB_r$. 
Up to shrinking the radius, one can assume that $U_j = \Om_{j,1} \cup \cdots \cup \Om_{j,M}$ where $(\Om_{j,k})_{j,k}$ are locally irreducible and by Lemma~\ref{lem:resoln_pscvx_Kdomain}, $\pi^{-1}(\Om_{j,k})$ is a K-domain in $\hX$ for each $j,k$. 
To simplify the notation, we denote by $(\Om_i)_i$ (resp. $(\wOm_i)_i$) the collection for theses $\Om_{j,k}$'s (resp. $\wOm_{j,k}$).

\smallskip
Let $(\chi_i)_i$ be a partition of unity subordinate to the open cover $(\Om_i)_i$ and denote by $\widetilde{\chi}_i = \pi^\ast \chi_i$. 
For each $i$, we have a K\"ahler metric $\om_{i,K}$ defined near $\overline{\Om_i}$ which is quasi-isometric to $\om$ on $\Om_i$. 
On the resolution $\hX$, there is a closed $(1,1)$-form $\ta$ such that $\homg_{i,K} := \pi^\ast \om_{i,K} + \ta$ is a K\"ahler metric defined near $\overline{\wOm_i}$ and $\homg_{i,K}$'s are quasi-isometric to $\homg$ on $\wOm_i$. 
Consider $\om_{i,\vep} = \pi^\ast \om_{i,K} + \vep \homg_{i,K}$ an $\vep$-family of K\"ahler metrics on $\wOm_i$ for $\vep \in (0,1]$.
By Lemma~\ref{lem:Lp_pullback_met}, there exist $p > 1$ and $A_1 > 0$ such that $\om_{i,\vep} \in \CK(\wOm_i, \homg^n, p, A_1)$ for all $i$. 
Also, by construction, $\om_\vep := \pi^\ast\om + \vep \homg$ is uniformly quasi-isometric to $\om_{i,\vep}$ on each $\wOm_i$; namely, 
there is a constant $A_2 > 0$ such that for all $\vep \in (0,1]$, for any $i$,
\[
    A_2^{-1} \om_\vep \leq \om_{i,\vep}
    \leq A_2 \om_\vep.
\]
Note that $|\dd \widetilde{\chi}_i|_{\om_{\vep}}^2 \leq |\dd \widetilde{\chi}_i|_{\pi^\ast \om}^2 = \pi^\ast|\dd\chi_i|_{\om}^2$ which is uniformly bounded. 
Hence, there exist $p' > \frac{q}{q-1}$ and $A_3 > 0$ so that $(\om_\vep)_{\vep \in (0,1]} \subset \CH((\wOm_i)_i, (\widetilde{\chi}_i)_i, \homg^n, p, A_1, A_2, p', A_3)$. 
Then, the desired Sobolev inequality follows directly from Theorem~\ref{thm:global_Sobolev}.

\smallskip
\noindent{\bf Part 2: Uniform Poincar\'e inequality.}
It is more or less well-known that uniform Poincar\'e inequalities can be derived from uniform Sobolev inequalities. 
We include a proof based on the strategy of \cite[Prop.~3.2]{Ruan_Zhang_2011} here. 
Suppose that a uniform constant $C_P$ does not exist.
Namely, there exists a sequence of $L^2_1(\hX,\om_{\vep_j})$-functions $(\vph_j)_j$ satisfying
\[
    \int_{\hX} \vph_j \om_{\vep_j}^n = 0,
    \quad \int_{\hX} |\vph_j|^2 \om_{\vep_j}^n = 1, 
    \quad\text{and}\quad 
    \int_{\hX} |\dd \vph_j|^2_{\om_{\vep_j}} \om_{\vep_j}^n \leq 1/j
\]
where $\vep_j \to 0$ as $j \to +\infty$.
Note that for any compact subset $K \subset \hX \setminus D$, the metric $\om_\vep$ converges smoothly to the metric $p^\ast \om$ on a neighborhood of $K$ in $\hX \setminus D$; hence, $\om_\vep$ is uniformly equivalent to $p^\ast \om$ on $K$ for all $\vep \in (0,1]$.
Now, we fix a connected compact subset $K \subset \hX \setminus D$.
The assumptions imply that $(\vph_j)_j$ is a bounded sequence in $L^2_1(K)$. 
By Banach--Alaoglu theorem, there exists a $L^2_1(K)$-function $\vph$ such that $\vph_j$ converges to $\vph$ weakly in $L^2_1(K)$.
Moreover, $\vph_j$ converges to $\vph$ strongly in $L^2(K)$ by Rellich theorem.

\smallskip
By lower semi-continuity of norm, we have
\[
    0 \leq \norm{\dd \vph}_{L^2(K)} \leq \lim_{j \ra \infty} \norm{\dd \vph_j}_{L^2(K)} = 0.
\]
Hence, $\vph$ is a locally constant function on $K$ and thus a constant by connectedness.
Using H\"older inequality, one can derive
\begin{equation}\label{eq:Poincare_1}
    \abs{\int_{\hX \setminus D} \vph_j \om_{\vep_j}^n}
    \leq \Vol_{\om_{\vep_j}}^{1/2}(\hX \setminus K) \norm{\vph_j}_{L^2(\hX \setminus K)}
    \leq \Vol_{\om_{\vep_j}}^{1/2}(\hX \setminus K).
\end{equation}
Note that
\begin{equation}\label{eq:Poincare_2}
    \abs{\int_K \vph_j \om_{\vep_j}^n} = \abs{\int_{\hX \setminus K} \vph_j \om_{\vep_j}^n},
\end{equation}
as $\int_{\hV} \vph_j \om_{\vep_j}^n = 0$.
Since $\vph$ is constant, applying (\ref{eq:Poincare_1}) and (\ref{eq:Poincare_2}), we obtain
\begin{equation}\label{eq:Poincare_4}
\begin{split}
    \Vol_{p^\ast \om}(K) |\vph| 
    &= \abs{\int_K \vph (p^\ast \om)^n}
    = \lim_{j \to \infty} \abs{\int_K \vph_j \om_{\vep_j}^n}
    = \lim_{j \to \infty} \abs{\int_{\hX \setminus K} \vph_j \om_{\vep_j}^n}\\
    &\leq \lim_{j \to \infty} \Vol_{\om_{\vep_j}}^{1/2}
    (\hX \setminus K)
    = \Vol_{p^\ast \om}^{1/2}(\hX \setminus K).
\end{split}
\end{equation}
On the other hand, from the uniform Sobolev inequality obtained in Part 1, for any $q \in (1,\frac{n}{n-1})$, there is a uniform constant $C_S > 0$ such that
\[
    \lt(\int_{\hX} |\vph_j|^{2q}\om_{\vep_j}^n\rt)^{1/q}
    \leq C_S \lt(\int_\hX |\dd \vph_j|^2_{\om_{\vep_j}} \om_{\vep_j}^n + \int_\hX |\vph_j|^2 \om_{\vep_j}^n \rt)
    \leq C_S \lt(1/j + 1 \rt)
    \leq 2C_S.
\]
By H\"older inequality, one can derive
\begin{equation}\label{eq:Poincare_3}
    \int_{\hX \setminus K} |\vph_j|^2 \om_{\vep_j}^n
    \leq \lt(\int_{\hX} |\vph_j|^{2q} \om_{\vep_j}^n\rt)^{1/q} \Vol_{\om_{\vep_j}}^{1/p}(\hX \setminus K)
    \leq 2C_S \Vol_{\om_{\vep_j}}^{1/p}(\hX \setminus K)
\end{equation}
where $p > 1$ is such that $1/p + 1/q = 1$.
Thus,
\begin{equation}\label{eq:Poincare_5}
\begin{split}
    1 &= \lim_{j \ra \infty} \int_{\hX} |\vph_j|^2 \om_{\vep_j}^n
    = \lim_{j \to \infty} \int_{\hX \setminus K} |\vph_j|^2 \om_{\vep_j}^n
    + \lim_{j \to \infty} \int_{K} |\vph_j|^2 \om_{\vep_j}^n\\
    &\leq 2C_S \Vol_{p^\ast \om}^{1/p}(\hX \setminus K) + \int_K |\vph|^2 \pi^\ast \om^n
    = 2C_S \Vol_{p^\ast \om}^{1/p}(\hX \setminus K) + \Vol_{p^\ast \om}(K)|\vph|^2\\
    &\leq 2C_S \Vol_{p^\ast \om}^{1/p}(\hX \setminus K) + \frac{\Vol_{p^\ast \om}(\hX \setminus K)}{\Vol_{p^\ast \om}(K)}.
\end{split}
\end{equation}
Here the first inequality comes from (\ref{eq:Poincare_3}), and the second inequality is referred to (\ref{eq:Poincare_4}).
Since $\hX \setminus D$ is connected by the irreducibility of $X$, one can choose a sequence of connected compact subset $K_j \subset \hX \setminus D$ such that $K_j \subset K_{j+1}$ for all $j$ and $\Vol_{p^\ast \om}(\hX \setminus K_j) \xrightarrow[j \ra + \infty]{} 0$.
Then the right-hand side of (\ref{eq:Poincare_5}) tends to zero, and this yields a contradiction.
\end{proof}

\subsection{Singular Gauduchon metrics}\label{subsec:SGM}
In this section, we prove Theorem~\ref{bigthm:approx_and_conv}. 
Following our previous work \cite[Sec.~2.1]{Pan_2022}, we have a uniform estimate on Gauduchon factors: 

\begin{thm}\label{thm:Gauduchon_estimate}
Let $(X,\om)$ be an $n$-dimensional compact hermitian manifold, and $\rho$ be a Gauduchon factor with respect to $\om$ and normalized by $\inf_X \rho = 1$. 
Then there exists a constant $C_G > 0$ such that
\[
    \sup_X \rho \leq C_G.
\]
Moreover, the constant $C_G$ depends only on $n, B, V, q, C_S, C_P$ where
\begin{itemize}
    \item $V$ is the volume of $X$ with respect to $\om$; 
    \item $B>0$ is a constant such that
    $
        -B \om^n \leq \ddc \om^{n-1} \leq B \om^n;
    $
    \item $q > 1$ and $C_S > 0$ are Sobolev exponent and Sobolev constant respectively, such that
    \[
        \forall f \in L^2_1(X,\om),\quad
        \lt(\int_X |f|^{2q} \om^n\rt)^{1/q} 
        \leq C_S \lt(\int_X |\dd f|_\om^2 \om^n + \int_X |f|^2 \om^n\rt);
    \]
    \item $C_P > 0$ is a Poincar\'e constant so that
    \[
        \forall f \in L^2_1(X,\om)
        \ \, \text{with} \ \, \int_X f \om^n = 0,
        \quad \int_X |f|_\om^2 \om^n \leq C_P \int_X |\dd f|_\om^2 \om^n.
    \]
\end{itemize}
\end{thm}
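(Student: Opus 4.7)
The plan is to run a Moser iteration on the linear PDE $\ddc(\rho\om^{n-1})=0$, following the scheme the author developed for the smooth compact case in~\cite{Pan_2022}. The starting point is an integral identity: testing the equation against $\rho^{p-1}$ and integrating by parts twice on the closed manifold~$X$, the cross terms built from $\dd\om^{n-1}$ and $\dc\om^{n-1}$ combine via Stokes and cancel neatly, leaving
\[
\int_X |\dd u|_\om^2\, \om^n \;=\; \frac{np}{4}\int_X \rho^p\, \ddc\om^{n-1}, \qquad u := \rho^{p/2},
\]
for every $p\notin\{0,1\}$. The hypothesis $-B\om^n \leq \ddc\om^{n-1} \leq B\om^n$ then gives $\|\dd u\|_{L^2}^2 \leq \tfrac{n|p|B}{4}\|u\|_{L^2}^2$, which inserted into the Sobolev inequality yields the Moser recursion $\|\rho\|_{L^{pq}} \leq A(p)^{1/p}\|\rho\|_{L^p}$ with $A(p) = C_S(n|p|B+4)/4$.

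I then iterate this recursion in two directions. For a fixed $p_0>0$ and $p_k := p_0 q^k \to +\infty$, the telescoped product converges (because $\sum_k p_k^{-1}\log A(p_k) < \infty$, using that $A$ grows linearly in $|p|$) and produces $\sup_X \rho \leq C_\infty \|\rho\|_{L^{p_0}}$. For $p_0<0$ and $p_k \to -\infty$, raising both sides to a negative power reverses the inequality; passing to the limit $\lim_{p\to -\infty}\|\rho\|_{L^p} = \inf_X \rho = 1$ yields the dual estimate $\|\rho\|_{L^{-p_0'}} \leq c_\infty$ for every $p_0' > 0$.

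The hard part is bridging a positive-exponent norm to a negative-exponent one; I would do this via the logarithmic substitution $v := \log\rho$. Re-expanding $\ddc(\rho\om^{n-1})=0$ through $\ddc(e^v) = e^v(\ddc v + \dd v \wedge \dc v)$ and integrating by parts yields the dual identity
\[
\int_X |\dd v|_\om^2\, \om^n \;=\; -n\int_X v\cdot \ddc\om^{n-1}.
\]
Since $\int_X \ddc\om^{n-1} = 0$ by Stokes, this identity is invariant under $v \mapsto v + c$; centering with $\bar v := V^{-1}\int_X v\,\om^n$, Cauchy--Schwarz followed by Poincar\'e give the universal bounds $\|\dd v\|_{L^2} \leq nB\sqrt{VC_P}$ and $\|v - \bar v\|_{L^2} \leq nBVC_P$. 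This a priori $W^{1,2}$-control on $\log\rho$ modulo constants, combined with Sobolev embedding and a further short Moser/H\"older iteration, produces a universal ratio $\|\rho\|_{L^{p_0}} \leq K\|\rho\|_{L^{-p_0'}}$; chaining it with the two one-sided Moser estimates closes the argument as $\sup_X \rho \leq C_\infty K c_\infty =: C_G$, with the asserted dependence on $n,B,V,q,C_S,C_P$.
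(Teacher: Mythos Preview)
Your proposal is correct and follows essentially the same route as the paper, which simply invokes the Moser iteration scheme from \cite[Sec.~2.1]{Pan_2022}. Your key identity $\int_X |\dd\rho^{p/2}|_\om^2\,\om^n = \frac{np}{4}\int_X \rho^p\,\ddc\om^{n-1}$ is precisely the content of \cite[Lem.~2.4]{Pan_2022} specialized to powers of~$\rho$, and the two-sided iteration bridged by the $W^{1,2}$-control on $\log\rho$ via Poincar\'e is the standard Moser--Harnack crossing-over step; the only change from \cite{Pan_2022} that the paper flags is allowing a general Sobolev exponent $q\in(1,\tfrac{n}{n-1})$ in place of $q=\tfrac{n}{n-1}$, which your iteration already accommodates.
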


Theorem~\ref{thm:Gauduchon_estimate} can be obtained by Moser's iteration technique, as detailed in \cite[Sec.~2.1]{Pan_2022}. 
While in {\it loc. cit.}, we used the standard Sobolev inequality (with $q = \frac{n}{n-1}$), the argument remains unchanged in the current setting. 

\smallskip
Let $(X,\om)$ be an $n$-dimensional compact hermitian variety and $\pi: \hX \to X$ be a log resolution of singularities. 
There exists a closed $(1,1)$-form $\ta$ coming from the negative combination of the curvature along $E_i$ such that $\homg = \pi^\ast \om + \ta$ is a hermitian metric on $\hX$ (see e.g. \cite[Prop.~3.2]{Fino_Tomassini_2009} for the standard argument). 
Consider an $\vep$-family of hermitian metrics $\om_\vep := \pi^\ast \om + \vep \homg$ for $\vep \in (0,1]$. 
By Gauduchon's theorem~\cite{Gauduchon_1977}, for each $\vep \in (0,1]$, there exists a smooth function $\rho_\vep$ on $\hX$ such that $\ddc (\rho_\vep \om_\vep^{n-1}) = 0$. 
In particular, $\rho_\vep$ is unique up to scaling; we shall normalized by $\inf_\hX \rho_\vep = 1$. 

\begin{thm}[cf. Theorem~\ref{bigthm:approx_and_conv}]\label{thm:existence_bdd_Gauduchon}
Under the above setting, the following hold: 
\begin{enumerate}
    \item There is a uniform constant $C_G > 0$ independent of $\vep \in (0,1]$ such that $\sup_\hX \rho_\vep \leq C_G$;
    \item As $\vep \to 0$, one can extract a subsequence $\rho_{\vep_k}$ converging in $\CC^\infty_{\loc}(\hX \setminus D)$ towards a function $\rho \in \CC^\infty_\loc(\hX \setminus D) \cap L^\infty(\hX \setminus D)$ which is also bounded away from zero.
\end{enumerate}
In particular, since $\pi: \hX \setminus D \to X^\reg$ is an isomorphism, $\rho$ can descend to $X^\reg$, and $\om_\RG := \rho^{\frac{1}{n-1}} \om$ is a bounded Gauduchon metric on $X^\reg$. 
Furthermore, $\rho$ is the unique bounded solution to $\ddc(\rho \om^{n-1}) = 0$ on $X^\reg$ up to scaling. 
\end{thm}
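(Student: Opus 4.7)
To prove (i), I would apply Theorem~\ref{thm:Gauduchon_estimate} on the smooth compact hermitian manifolds $(\hX,\om_\vep)$, checking that each of the input quantities $V,B,C_S,C_P$ can be taken $\vep$-uniform. The volume bound is immediate from $\om_\vep \leq \pi^\ast\om + \homg$; the Sobolev and Poincar\'e constants are exactly what Theorem~\ref{thm:unif_Sobolev_Poincare} delivers. The torsion bound $-B\om_\vep^n \leq \ddc\om_\vep^{n-1} \leq B\om_\vep^n$ is the most delicate: expanding $\om_\vep^{n-1}$ and $\om_\vep^n$ in powers of $\vep$, I would track how each smooth term $\vep^k \ddc(\pi^\ast\om^{n-1-k}\wedge\homg^k)$ pairs with the positive mixed term $\vep^k \binom{n}{k}\pi^\ast\om^{n-k}\wedge\homg^k$ of $\om_\vep^n$, so that the pullback factors $\pi^\ast\om$ balance and no unbounded ratio appears as $\vep \to 0$.

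\textbf{Smooth convergence and descent.} For (ii), on every compact $K \Subset \hX \setminus D$ the metrics $\om_\vep$ converge in $\CC^\infty$ to $\pi^\ast \om$, and the Gauduchon equation is a linear elliptic PDE with $\vep$-uniform ellipticity constants and $\vep$-uniform coefficients on $K$. Combined with the $L^\infty$-bound from (i), interior Schauder and Krylov--Safonov estimates give $\vep$-uniform $\CC^k_\loc(\hX\setminus D)$-bounds for $\rho_\vep$, and Arzel\`a--Ascoli extracts a subsequence $\rho_{\vep_k} \to \rho$ in $\CC^\infty_\loc(\hX\setminus D)$ solving $\ddc(\rho\, \pi^\ast\om^{n-1}) = 0$. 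The normalization $\inf_\hX \rho_\vep = 1$ together with a local Harnack inequality (invoking connectedness of $\hX \setminus D$, which follows from irreducibility of $X$) yields a uniform positive lower bound for $\rho$ on every compact subset of $\hX \setminus D$. Via the biholomorphism $\pi : \hX \setminus D \to X^\reg$, $\rho$ descends to a bounded positive smooth function on $X^\reg$ satisfying $\ddc(\rho\om^{n-1}) = 0$, giving a bounded Gauduchon metric $\om_\RG = \rho^{1/(n-1)}\om$ in the sense of Definition~\ref{defn:bounded_Gauduchon}.

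\textbf{Uniqueness and main difficulty.} For uniqueness up to scaling, given a second bounded positive smooth solution $\rho'$ on $X^\reg$, the ratio $u = \log(\rho'/\rho)$ is bounded on $X^\reg$ and satisfies a second-order linear elliptic equation with no zeroth-order term. Pulling back to $\hX \setminus D$ and applying the Grauert--Remmert type extension across the analytic set $D$ recalled in Section~\ref{sec:prelim}, both $+u$ and $-u$ extend as bounded quasi-subharmonic functions on the compact smooth manifold $\hX$, and the strong maximum principle then forces $u$ to be constant. The main obstacle will be the $\vep$-uniform torsion bound in the first step, since the pointwise ratio $|\ddc \om_\vep^{n-1}|/\om_\vep^n$ may a priori blow up along $D$; resolving it requires the careful $\vep$-bookkeeping of degenerating and non-degenerating contributions described above. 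A secondary delicate point is propagating the maximum principle across the singular locus of $X$ in the uniqueness proof, handled by lifting to the smooth resolution and extending bounded subharmonic functions across the exceptional divisor.
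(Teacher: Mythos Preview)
Your plan for (i) and (ii) coincides with the paper's proof. The paper carries out exactly the binomial expansion of $\ddc\om_\vep^{n-1}$ and $\om_\vep^n$ that you describe to obtain the uniform torsion constant $B$ (its Step~1), and for the local higher-order estimates on $\hX \setminus D$ it uses G\r{a}rding's inequality for $P_\vep = \Dt_{\om_\vep}^\ast$ together with a bootstrap (its Step~2) in place of Schauder theory, which is only a cosmetic difference.

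Two remarks. First, invoking Harnack for the lower bound is unnecessary: the normalization $\inf_{\hX}\rho_\vep = 1$ already gives $\rho_\vep \geq 1$ pointwise on all of $\hX$, hence $\rho \geq 1$ on $\hX\setminus D$ in the limit; the paper simply records that $\rho$ is bounded between $1$ and $C_G$. Second, your uniqueness sketch has a genuine gap. The Grauert--Remmert extension recalled in Section~\ref{sec:prelim} is for \emph{plurisubharmonic} functions, whereas $u = \log(\rho'/\rho)$ is only subharmonic for an operator built from $\pi^\ast\om$ (equivalently $\pi^\ast\om_\RG$), and that operator degenerates along $D$; neither the extension across $D$ nor the strong maximum principle on the compact manifold $\hX$ is therefore available as stated. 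A workable route is to stay on $X$, observe that $w = \rho'/\rho$ solves $\ddc w \wedge \om_\RG^{n-1} + \dd w \wedge \dc\om_\RG^{n-1} - \dc w \wedge \dd\om_\RG^{n-1} = 0$ (no zeroth-order term, since $\ddc\om_\RG^{n-1} = 0$), and argue by integration by parts against cutoffs concentrated away from $X^\sing$, in the spirit of Lemma~\ref{lem:cutoffs} and the uniqueness part of the proof of Theorem~\ref{bigthm:HE_metric}. The paper's displayed proof of Theorem~\ref{thm:existence_bdd_Gauduchon} does not itself contain the uniqueness argument.
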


\begin{proof}[Proof of Theorem~\ref{thm:existence_bdd_Gauduchon}]
The proof follows the approach given in the author's PhD thesis \cite[Sec.~3.2]{Pan_2023_these}. 

\smallskip
\noindent{\bf Step 1: Uniform $L^\infty$-estimate.}
To get a uniform $L^\infty$-estimate on $\rho_\vep$, it suffices to show that the constants $V$, $B$, $q$, $C_S$, $C_P$ in Theorem~\ref{thm:Gauduchon_estimate} can be made independent of $\vep \in (0,1]$. 
Obviously, the volume is bounded below by $\Vol_\om(X)$ and above by $\Vol_{\om_1}(\hX)$.
By Theorem~\ref{thm:unif_Sobolev_Poincare}, we have a uniform control of $q, C_S$ and $C_P$ independent of $\vep \in (0,1]$. 
It suffices to check the existence of a constant $B>0$ such that $-B \om_\vep^n \leq \ddc \om_\vep^{n-1} \leq B \om_\vep^n$ for all $\vep \in (0,1]$.

\smallskip
On $(X,\om)$, there is a constant $B' > 0$ such that 
\begin{equation}\label{eq:constant_B'}
    -B' \om^2 \leq \ddc\om \leq B' \om^2
    \quad \text{and} \quad
    -B' \om^3 \leq \dd\om \w \dc\om \leq B' \om^3.
\end{equation}
By the binomial expansions, we can express $\ddc \om_\vep^{n-1}$ and $\om_\vep^n$ as
\[
    \ddc \om_\vep^{n-1} 
    = \sum_{j=0}^{n-1} \vep^j \binom{n-1}{j} \underbrace{\ddc\lt(\pi^\ast \om^{n-1-j} \w \homg^j\rt)}_{=: \af_j},
    \quad\text{and}\quad
    \om_\vep^n 
    = \sum_{j=0}^n \vep^j \binom{n}{j} \underbrace{\pi^\ast \om^{n-j} \w \homg^j}_{=: \bt_j \geq 0}.
\]
Hence, to find a constant $B > 0$ satisfying $- B \om_\vep^n \leq \ddc \om_\vep^{n-1} \leq B \om_\vep^n$, it suffices to prove that for all $j \in \{0, \cdots, n-1\}$, one has $-\bt_j \lesssim \af_j \lesssim \bt_j$.
Note that
\[
    \af_j = \ddc\lt(\pi^\ast \om^{n-1-j} \w \homg^j\rt)
    = \underbrace{\ddc \pi^\ast \om^{n-1-j} \w \homg^j}_{=: \af_{j,1}} 
    + \underbrace{\pi^\ast \om^{n-1-j} \w \ddc \homg^j}_{=: \af_{j,2}} 
    + 2 \underbrace{\dd \pi^\ast \om^{n-1-j} \w \dc \homg^j}_{=: \af_{j,3}}.
\]
Since $\dd \homg = \dd \pi^\ast \om$, $\dc \homg = \dc \pi^\ast \om$ and $\ddc \homg = \ddc \pi^\ast \om$, we have
\begin{align*}
    \af_{j,1} &= (n-1-j) \lt(\ddc \pi^\ast \om \w \pi^\ast \om^{n-2-j}\rt) \w \homg^j\\
    &\quad+ (n-1-j)(n-2-j) \lt(\dd \pi^\ast \om \w \dc \pi^\ast \om \w \pi^\ast \om^{n-3-j}\rt) \w \homg^j,
\end{align*}
\[
    \af_{j,2}  = \underbrace{j \lt(\ddc\pi^\ast \om \w \pi^\ast \om^{n-1-j}\rt) \w \homg^{j-1}}_{=: \af_{j,2}'}
    + \underbrace{j(j-1) \lt(\dd\pi^\ast \om \w \dc\pi^\ast \om \w \pi^\ast \om^{n-1-j}\rt) \w \homg^{j-2}}_{=: \af_{j,2}''},
\]
and 
\[
    \af_{j,3} = j (n-1-j) \lt(\dd\pi^\ast \om \w \dc\pi^\ast \om \w \pi^\ast \om^{n-2-j}\rt) \w \homg^{j-1}.
\]
Using (\ref{eq:constant_B'}), one can deduce that there is a constant $C = C(n, B')$ such that for all $j$,  
\[
    - C \bt_j \leq \af_{j,1} \leq C \bt_{j},\quad
    -  C \bt_{j-2} \leq \af_{j,2}', \af_{j,3} \leq C \bt_{j-2},\quad
    - C \bt_{j-1} \leq \af_{j,2}'' \leq C \bt_{j-1}
\]
Note that the power of $\vep$ in front of $\af_{j,1}$ (resp. $\af_{j,2}'$, resp. $\af_{j,2}'', \af_{j,3}$) is less than or equal to the power of $\vep$ in front of $\bt_j$ (resp. $\bt_{j-2}$, resp. $\bt_{j-1}$).
Thus, there exists a uniform constant $B > 0$ which depends only on $n$, $B'$ such that for all $\vep \in (0,1]$, 
\begin{equation}\label{eq:desing_const_B}
    -B \om_\vep^n 
    \leq \ddc \om_\vep^{n-1} 
    \leq B \om_\vep^n.
\end{equation}
Therefore, by Theorem~\ref{thm:Gauduchon_estimate}, there is a uniform constant $C_G > 0$ such that
\begin{equation}\label{eq:unif_bdd_Gauduchon_factor}
    \sup_\hX \rho_\vep \leq C_G
\end{equation}
for all $\vep \in (0,1]$.

\smallskip
\noindent{\bf Step 2: Convergence of $\rho_\vep$.}
Then we follow \cite[Thm.~3.1]{Pan_2022} to construct a smooth function $\rho$ on $\hX \setminus E$.
This $\rho$ is the limit of $(\rho_{\vep_j})_{j \in \BN}$ for a sequence $\vep_j \ra 0$ when $j \ra +\infty$.

\smallskip
We set $P_\vep = \Dt_{\om_\vep}^\ast$ and fix $U_1 \Subset U_2 \Subset \hX \setminus D$ which are connected open subsets.
On $U_2$, the metric $\pi^\ast \om$ is quasi-isometric to the metric $\om_\vep$, and the volume form $\pi^\ast \om^n$ is comparable with $\om_\vep^n$.
In other words, we have a uniform constant $C_{U_2} > 0$ such that for all $\vep \in (0,1]$,
\begin{equation}\label{eq:loc_met_bound_Ch3}
    C_{U_2}^{-1} \iprod{\cdot}{\cdot}_{\pi^\ast \om} 
    \leq \iprod{\cdot}{\cdot}_{\om_\vep} 
    \leq C_{U_2} \iprod{\cdot}{\cdot}_{\pi^\ast \om},
    \text{ and }
    C_{U_2}^{-1} \pi^\ast \om^n 
    \leq \om_\vep^n 
    \leq C_{U_2} \pi^\ast \om^n
\end{equation}
on $U_2$.

\smallskip
By G\r{a}rding inequality, we have
\[
    \norm{u}_{L^2_2(U_1, \pi^\ast \om)} 
    \leq C_{U_1, U_2} \lt( \norm{P_\vep u}_{{L^2(U_2, \pi^\ast \om)}} + \norm{u}_{{L^2(U_2, \pi^\ast \om)}} \rt)
\]
for all $u \in \CC^\infty_c(U_2)$.
The constant $C_{U_1,U_2}$ can be chosen independent of $\vep$ because the coefficients of $P_{\vep}$ move smoothly in $\vep$ on $\overline{U_2}$.
Choose a cut-off function $\chi$ such that $\supp(\chi) \subset U_2$ and $\chi \equiv 1$ on $U_1$.
From G\r{a}rding inequality, it follows that
\begin{equation}\label{eq:rho_Garding_1_Ch3}
    \norm{\rho_\vep}_{{L^2_2(U_1, \pi^\ast \om)}} 
    \leq C_{U_1,U_2} \lt( \norm{P_\vep (\chi \rho_\vep)}_{{L^2(U_2, \pi^\ast \om)}} 
    + \norm{\chi \rho_\vep}_{{L^2(U_2, \pi^\ast \om)}} \rt).
\end{equation}
In (\ref{eq:rho_Garding_1_Ch3}), the second term $\norm{\chi \rho_\vep}_{{L^2(U_2, \pi^\ast \om)}}$ is uniformly bounded by \eqref{eq:unif_bdd_Gauduchon_factor}.
Hence, we only need to estimate $\norm{P_\vep (\chi\rho_\vep)}_{{L^2(U_2, p^\ast \om_V)}}$.
Note that
\begin{equation}\label{eq:Pt_cut_eq_Ch3}
\begin{split}
    P_\vep(\chi \rho_\vep) 
    &= 2 \iprod{\dd\rho_\vep}{\dd\chi}_{\om_\vep} + \rho_\vep P_\vep(\chi) - \rho_\vep \chi \frac{\ddc\om_\vep^n}{\om_\vep^n}.
\end{split}
\end{equation}
The last two terms on the RHS, $\rho_\vep P_\vep(\chi)$ and $\rho_\vep \chi \frac{\ddc\om_\vep^{n-1}}{\om_\vep^n}$, are uniformly bounded, so we only need to control the $L^2$-norm of the first term $\iprod{\dd\rho_\vep}{\dd\chi}_{\om_\vep}$:
{\small
\begin{equation*}
\begin{split}
    \int_{U_2} \abs{\iprod{\dd\rho_\vep}{\dd\chi}_{\om_\vep}}^2 \pi^\ast \om^n
    &\leq C_{U_2}^2 \lt(\sup_{U_2} \abs{\dd\chi}^2_{\pi^\ast \om}\rt) \int_{U_2} \abs{\dd\rho_\vep}_{\om_\vep}^2 \om_\vep^n 
    \leq C_{U_2}^2 \lt(\sup_{U_2} \abs{\dd\chi}^2_{\pi^\ast \om}\rt) \int_\hX \abs{\dd\rho_\vep}^2_{\om_\vep} \om_\vep^n\\
    &\leq C_{U_2}^2 \lt(\sup_{U_2} \abs{\dd\chi}^2_{\pi^\ast \om}\rt) \frac{nB}{2} \int_\hX \rho_\vep^2 \om_\vep^n.
\end{split}
\end{equation*}
}%
Here, the first line is by Cauchy--Schwarz inequality and (\ref{eq:loc_met_bound_Ch3}).
The third line follows from \cite[Lem.~2.4]{Pan_2022} by taking $F'(x) = n$ and $G(x) = \frac{n x^2}{2}$.
By \eqref{eq:unif_bdd_Gauduchon_factor}, one can then find a uniform bound of $\norm{P_\vep(\chi\rho_\vep)}_{L^2(U_2, \pi^\ast \om)}$. 
Hence, $\norm{\rho_\vep}_{L^2_2(U_1,\pi^\ast \om)}$ is uniformly bounded by some uniform constant $C(U_1,U_2)$.

\smallskip
For higher order estimates, we apply higher order G\r{a}rding inequalities on the fixed domains $U_1 \Subset U_2 \Subset \hX \setminus E$:
\[
    \norm{u}_{L^2_{s+2}(U_1,\pi^\ast \om)} 
    \leq C_{s, U_1, U_2}\lt( \norm{P_\vep u}_{L^2_s(U_2, \pi^\ast \om)} + \norm{u}_{L^2(U_2, \pi^\ast \om)} \rt)
\]
for all $u \in \CC^\infty_c(U_2)$.
Let $\CU = (U_i)_{i\in\BN}$ be a relatively compact exhaustion of $\hX \setminus D$.
Differentiating (\ref{eq:Pt_cut_eq_Ch3}) on both sides and using a bootstrapping argument, we obtain $\norm{\rho_\vep}_{L^2_s(U_1,\pi^\ast \om)} < C(s,\CU)$ where $C(s,\CU)$ does not depend on $\vep \in (0,1]$.
By Rellich's theorem, there exists a subsequence $(\rho_{\vep_j})_{j \in \BN}$ such that $\rho_{\vep_j}$ converges to $\rho$ in $\CC^l(\overline{U_1})$ for all $l \in \BN$ when $\vep_j \ra 0$.
Therefore $\ddc(\rho \pi^\ast \om^{n-1}) = \lim_{j \ra + \infty} \ddc (\rho_{\vep_j} \om_{\vep_j}^{n-1}) = 0$ on $U_1$.
Using a diagonal argument, we infer that there is a smooth function $\rho$ on $\hX \setminus D$. 
Recall that $\pi_{|\hX \setminus D}: \hX \setminus D \to X^\reg$ s an isomorphism; hence, $\rho$ can descend to $X^\reg$ as a smooth function bounded between $1$ and $C_G$, and it satisfies $\ddc(\rho \om^{n-1}) = 0$ on $X^\reg$. 
\end{proof}

\section{Slope stability and openness property}\label{sec:slope_openness}
We introduce the notion of slope stability with respect to a bounded Gauduchon metric and then show the openness property for stable sheaves on a resolution of singularities. 

\subsection{Definition of stability}\label{sec:defn_stability}
Fix $X$ an $n$-dimensional compact complex variety, and we now further assume that $X$ is normal. 

\smallskip
Let $\om$ be a smooth hermitian metric on $X$.
By Theorem~\ref{thm:existence_bdd_Gauduchon}, there exists a positive smooth function $\rho$ bounded away from zero and $+\infty$ on $X^\reg$ such that $\ddc (\rho \om^{n-1}) = 0$ on $X^\reg$. 
We denote by $\om_\RG = \rho^{\frac{1}{n-1}} \om$ a bounded Gauduchon metric. 
By \cite[Thm.~B]{Pan_2022}, $T_\RG := \om_\RG^{n-1}$ can be extended trivially to a pluriclosed current on $X$, i.e. $\ddc T_\RG = 0$ in the sense of currents. 

\smallskip
Let $\CF$ be a torsion-free coherent sheaf of rank $r$ on $X$. 
Take $f: Y \to X$ a log resolution such that $f_{|f^{-1}(X^\reg)}$ is an isomorphism. 
Denote by $f^\sharp \CF := f^\ast \CF / \Tor(f^\ast \CF)$ which is torsion-free.
Note that $f^\ast T_\RG$ is a $\ddc$-closed current and it represents a class in the $(n-1,n-1)$ Aeppli cohomology group $H^{n-1,n-1}_{\mathrm{A}}(Y,\BR)$. 
The degree of $\CF$ is defined as 
\[
    \deg_{\om_\RG}(\CF) := \int_Y \BCC(\det(f^\sharp \CF)) \w f^\ast T_\RG
\]
where $\BCC(\det(f^\sharp \CF)) \in H^{1,1}_{\mathrm{BC}}(Y,\BR)$ is the first Bott--Chern class of $\det(f^\sharp \CF) := (\bigwedge^r f^\sharp \CF)^{\vee\vee}$. The following observation shows the well-definedness of the degree (cf. \cite[page~773]{Claudon_Graf_Guenancia_2022}). 
Suppose that there is another resolution $g: W \to X$. 
One can find a resolution $h: W' \to X$ that factors as $W' \xrightarrow{g'} Y \xrightarrow{f} X$ and $W' \xrightarrow{f'} W \xrightarrow{g} X$. 
It therefore suffices to verify the case where the resolution factors as $Y' \xrightarrow{g} Y \xrightarrow{f} X$. 
Note that $\det(h^\sharp \CF)$ and $g^\ast \det(f^\sharp \CF)$ are isomorphic away from the exceptional divisor of $g$ which is denoted by $G = \sum_i G_i$. 
Thus, $\BCC(\det(h^\sharp \CF)) - g^\ast \BCC(\det(f^\sharp \CF))$ is a linear combination of $G_i$.
Since $\int_{G_i} h^\ast T_\RG = 0$ (see the proof of Lemma~\ref{lem:pullback_stable}), we get the following
\[
    \int_{Y'} \BCC(\det (h^\sharp \CF)) \w h^\ast T_\RG
    = \int_{Y'} g^\ast \BCC(\det (f^\sharp \CF)) \w h^\ast T_\RG
    = \int_{Y} \BCC(\det(f^\sharp \CF)) \w f^\ast T_\RG.
\]

\begin{defn}
For a torsion-free coherent sheaf $\CF$ on $X$, the corresponding notion of $\mu_{\om_\RG}$-slope is defined by 
\[
    \mu_{\om_\RG}(\CF) := \frac{\deg_{\om_\RG}(\CF)}{\rk(\CF)}.
\]
A torsion-free coherent sheaf $\CF$ is {\it $\mu_{\om_\RG}$-stable} (resp. {\it $\mu_{\om_\RG}$-semi-stable}) if for every proper coherent subsheaf $\CG$ of $\CF$, 
\[
    \mu_{\om_\RG}(\CG) < \mu_{\om_\RG}(\CF) \quad \text{(resp. $\mu_{\om_\RG}(\CG) \leq \mu_{\om_\RG}(\CF)$)}.
\]
\end{defn}

\subsection{Resolution and openness of stability}
For a torsion-free coherent sheaf $\CF$ of rank $r$, define
\[
    Z(X,\CF) := X^\sing \cup \Sing(\CF) 
\]
where $\Sing(\CF) = \{x \in X \;|\; \text{$\CF$ is not locally free near $x$}\}$. 
Note that $Z(X,\CF)$ has codimension at least $2$. 

\smallskip
In the sequel, we shall always consider the following setting: 

\begin{setupnota}\label{setup:reflexive_sheaf_bdd_Gauduchon}
Let $X$ be an $n$-dimensional compact normal variety, and let $\CE$ be a coherent reflexive sheaf of rank $r$ on $X$. 
Define $Z := Z(X,\CE)$, and $X^\circ := X \setminus Z$.
By Hironaka and Rossi \cite{Hironaka_1964, Rossi_1968}, we can fix a log resolution $\pi: \hX \to X$ of $(X,Z)$ such that $E := \pi^\sharp\CE$ is locally free.
Set $D := \Exc(\pi)$.
Note that $(\pi_\ast E)^{\vee \vee} \simeq \CE$.  

\smallskip
Consider a smooth hermitian metric $\om$ on $X$. 
By Theorem~\ref{bigthm:approx_and_conv}, there is a bounded Gauduchon metric $\om_\RG = \rho^{\frac{1}{n-1}} \om$ on $X^\reg$ so that $\int_X \om_\RG^n = 1$. 

\smallskip
We shall denote by $\Tr_{\End}$ for the trace acting on $\CA^{p,q}(\hX, \End E)$ the $\End(E)$-valued smooth $(p,q)$-forms, and $\tr_{\eta}$ for the contraction on (bundle-valued) forms with respect to a hermitian metric $\eta$.
\end{setupnota}

Recall that, as in Section~\ref{subsec:SGM}, there is a smooth closed $(1,1)$-form $\ta$ on $\hX$ such that $\homg := \pi^\ast \om + \ta$ is a hermitian metric on $\hX$ and for $\vep \in (0,1]$, define $\om_\vep := \pi^\ast \om + \vep \homg$. 
Gauduchon's theorem yields a unique positive function $\rho_\vep \in \CC^\infty(\hX)$ satisfying $\ddc (\rho_\vep \om_\vep^{n-1}) = 0$ and $\int_\hX \om_{\vep,\RG}^n = 1$ where $\om_{\vep, \RG} := \rho_\vep^{\frac{1}{n-1}} \om_\vep$. 
From Theorem~\ref{thm:existence_bdd_Gauduchon}, we have shown that $(\rho_\vep)_{\vep \in (0,1]}$ are uniformly bounded from above and away from zero, and $\rho_\vep$ converges in $\CC^\infty_{\loc}(\hX \setminus D)$ towards a function $\rho \in \CC^\infty(X^\reg) \cap L^\infty(X^\reg)$ so that $\ddc (\rho \cdot \pi^\ast \om^{n-1}) = 0$ on $\hX \setminus D$.
As $\pi_{|\hX \setminus D}: \hX \setminus D \to X^\reg$ is an isomorphism, $\rho$ can descend to $X^\reg$ and $\om_{\RG} = \rho^{\frac{1}{n-1}} \om$ due to the uniqueness. 

\smallskip
We recall the following theorem of Cao--Graf--Naumann--P\u{a}un--Peternell--Wu \cite{Paun_et_al_2023_HE_sing}: 

\begin{thm}[{\cite[Thm.~1.1]{Paun_et_al_2023_HE_sing}}]\label{thm:Mihai_et_al_thm1}
There exists a hermitian metric $h_E$ on $E$ and a constant $C > 0$ such that
\[
    \ii \Ta(E,h_E) \geq -C \pi^\ast \om \otimes \Id_E.
\]
\end{thm}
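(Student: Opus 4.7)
The plan is to build $h_E$ by patching local quotient metrics on $E$ that arise from local presentations of $\CE$ as a quotient of a trivial sheaf, using a partition of unity pulled back from $X$. The crucial point is that, because the partition of unity is pulled back from the base, the error terms appearing in the curvature of the patched metric are controlled by pullbacks of bounded $(1,1)$-forms on $X$, and hence by $\pi^\ast \om$ rather than merely by a full hermitian metric on $\hX$.

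More concretely, coherence of $\CE$ and compactness of $X$ allow me first to cover $X$ by finitely many opens $\{U_\af\}_{\af \in I}$ on each of which there is a surjection of coherent sheaves $\CO_{U_\af}^{N_\af} \twoheadrightarrow \CE|_{U_\af}$. Pulling back via $\pi$ and dividing out by torsion gives surjections $\CO_{\wU_\af}^{N_\af} \twoheadrightarrow E|_{\wU_\af}$ (with $\wU_\af := \pi^{-1}(U_\af)$), realizing $E|_{\wU_\af}$ as a holomorphic quotient of a trivial bundle. Equipping $\CO_\hX^{N_\af}$ with the flat hermitian metric yields a quotient metric $h_\af$ on $E|_{\wU_\af}$ which, by the classical second-fundamental-form formula, satisfies $\ii \Ta(E|_{\wU_\af}, h_\af) \geq 0$ in the Griffiths (in fact Nakano) sense. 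I then choose a smooth partition of unity $\{\chi_\af\}$ on $X$ subordinate to $\{U_\af\}$ (smoothness understood in the sense of the reduced analytic structure, via local embeddings into some $\BC^N$), set $\widetilde{\chi}_\af := \pi^\ast \chi_\af$, and define the global metric through its dual by
\[
    h_E^{-1} := \sum_{\af \in I} \widetilde{\chi}_\af \, h_\af^{-1},
\]
a smooth positive-definite hermitian form on $E^\vee$, giving rise to a smooth hermitian metric $h_E$ on $E$.

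For the curvature estimate, I expand $\ii \Ta(E, h_E)$ in a local holomorphic frame of $E^\vee$ using the formula above. The non-negativity of each $\ii \Ta(h_\af)$ ensures that all negative contributions come from derivatives of $\widetilde{\chi}_\af$. The key structural identity is that, since $\widetilde{\chi}_\af = \pi^\ast \chi_\af$,
\[
    \ii \pl \widetilde{\chi}_\af \w \db \widetilde{\chi}_\af = \pi^\ast \bigl(\ii \pl \chi_\af \w \db \chi_\af\bigr), \qquad \ddc \widetilde{\chi}_\af = \pi^\ast \ddc \chi_\af,
\]
and by compactness of $X$ the right-hand sides are bounded in absolute value by $C' \om$ on $X^\reg$; pullback preserves inequalities of positive $(1,1)$-forms, so these expressions are dominated by $C' \pi^\ast \om$ on $\hX$. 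The mixed cross terms coupling $\pl \widetilde{\chi}_\af$ with $\db h_\af^{-1}$ are then absorbed via Cauchy--Schwarz against the non-negative intrinsic curvatures $\ii \Ta(h_\af)$, and the resulting lower bound reads $\ii \Ta(E, h_E) \geq -C \pi^\ast \om \otimes \Id_E$ for a uniform constant $C > 0$. The main technical obstacle is exactly this curvature bookkeeping: one must carefully verify that every negative term in the expansion is pulled back from $X$ and hence controlled by $\pi^\ast \om$, and identify precisely the right non-negative pieces against which to run Cauchy--Schwarz so that no uncontrolled contribution involving only $\homg$-type terms remains.
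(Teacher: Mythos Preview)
Your plan is correct and is precisely the construction carried out in the cited reference \cite[Thm.~1.1 and Prop.~2.2]{Paun_et_al_2023_HE_sing}; the present paper does not reprove the statement but simply invokes that result, remarking only that the argument there is local and therefore insensitive to whether $\om$ is K\"ahler. In particular you have identified the decisive point: gluing on the dual side via $h_E^{-1}=\sum_\af(\pi^\ast\chi_\af)h_\af^{-1}$ with a partition of unity pulled back from $X$ forces every error term ($\ddc\widetilde\chi_\af$, $\ii\pl\widetilde\chi_\af\w\db\widetilde\chi_\af$, and the cross terms after Cauchy--Schwarz) to be a pullback from $X$, hence dominated by $\pi^\ast\om$ rather than by a genuine metric on $\hX$.
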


Since the proof of \cite[Thm.~1.1]{Paun_et_al_2023_HE_sing} is purely local, it works also in the hermitian setting. 
In Theorem~\ref{thm:existence_bdd_Gauduchon}, we have showed that $\om_\vep$ (resp. $\om$) and $\om_{\vep,\RG}$ (resp. $\om_\RG$) are uniformly quasi-isometric for all $\vep \in (0,1]$. 
Combining Theorem~\ref{thm:Mihai_et_al_thm1} and Theorem~\ref{thm:existence_bdd_Gauduchon}, one obtains the following: 

\begin{cor}\label{cor:Mihai_et_al_thm1}
There exists a hermitian metric $h_E$ on $E$ and a constant $C > 0$ such that, for all $\vep \in (0,1]$, 
\[
    \ii \Ta(E,h_E) \geq -C \om_{\vep,\RG} \otimes \Id_E
    \quad\text{and}\quad
    \ii \Ta(E,h_E) \geq -C \pi^\ast (\om_\RG) \otimes \Id_E.
\]
\end{cor}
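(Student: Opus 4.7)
The plan is to deduce this corollary directly by combining Theorem~\ref{thm:Mihai_et_al_thm1} with the uniform quasi-isometry estimates for the Gauduchon metrics furnished by Theorem~\ref{thm:existence_bdd_Gauduchon}. First, I would apply Theorem~\ref{thm:Mihai_et_al_thm1} to fix, once and for all, a hermitian metric $h_E$ on $E$ together with a constant $C_0 > 0$ such that
\[
\ii \Ta(E, h_E) \geq -C_0\, \pi^\ast \om \otimes \Id_E
\]
on all of $\hX$.

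Next, I would translate the pointwise inequality $\pi^\ast \om \le c\, \om_{\vep,\RG}$ (and similarly $\pi^\ast \om \le c\, \pi^\ast \om_\RG$) into curvature bounds. Since $\hat\om$ is a positive $(1,1)$-form, one has $\om_\vep = \pi^\ast \om + \vep \hat\om \ge \pi^\ast \om$ for every $\vep \in (0,1]$. By Theorem~\ref{thm:existence_bdd_Gauduchon}, the Gauduchon factors $(\rho_\vep)_{\vep \in (0,1]}$ are uniformly bounded above by $C_G$ and uniformly bounded away from $0$ (up to the normalization constant needed to pass from $\inf_\hX \rho_\vep = 1$ to $\int_\hX \om_{\vep,\RG}^n = 1$, which is itself uniformly controlled thanks to the two-sided bound $\pi^\ast \om \le \om_\vep \le \om_1$ on the volume). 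Consequently there exists a uniform $c > 0$ with $\rho_\vep^{1/(n-1)} \ge c$ and $\rho^{1/(n-1)} \ge c$, which yields
\[
    \pi^\ast \om \le \tfrac{1}{c}\, \om_{\vep,\RG}
    \quad\text{and}\quad
    \pi^\ast \om \le \tfrac{1}{c}\, \pi^\ast \om_\RG
\]
for every $\vep \in (0,1]$.

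Combining these two ingredients gives
\[
    \ii \Ta(E, h_E) \ge -C_0\, \pi^\ast \om \otimes \Id_E \ge -\tfrac{C_0}{c}\, \om_{\vep,\RG} \otimes \Id_E,
\]
and likewise with $\pi^\ast \om_\RG$ in place of $\om_{\vep,\RG}$, so the conclusion follows with $C := C_0/c$. There is no real obstacle: the corollary is a direct packaging of Theorem~\ref{thm:Mihai_et_al_thm1} with the uniform upper/lower control of the Gauduchon factors already recorded in Theorem~\ref{thm:existence_bdd_Gauduchon}. The only mild point of care is to verify that the particular normalization $\int \om_{\vep,\RG}^n = 1$ used in Setup~\ref{setup:reflexive_sheaf_bdd_Gauduchon} does not destroy the lower bound on $\rho_\vep$, which is immediate from the uniform bounds on the total volumes.
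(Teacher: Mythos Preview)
Your proposal is correct and follows essentially the same approach as the paper, which simply remarks that the corollary is obtained by combining Theorem~\ref{thm:Mihai_et_al_thm1} with the uniform quasi-isometry between $\om_\vep$ and $\om_{\vep,\RG}$ (resp.\ $\om$ and $\om_\RG$) provided by Theorem~\ref{thm:existence_bdd_Gauduchon}. Your extra care in checking that the renormalization from $\inf_{\hX}\rho_\vep = 1$ to $\int_{\hX}\om_{\vep,\RG}^n = 1$ preserves the two-sided bounds on $\rho_\vep$ is a point the paper leaves implicit.
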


\subsubsection{Degree and curvature}
Let $h_E$ be a hermitian metric on $E$. 
From \cite[Prop.~2.3]{Bruasse_2001}, for a subsheaf $\CG$ of $E$, the degree of $\CG$ can be obtained by 
\begin{equation}\label{eq_deg_subsheaf}
    \deg_{\om_{\vep, \RG}}(\CG) 
    = \frac{\ii}{2\pi} \int_\hX \Tr_{\End}\lt(\Ta(\CG,{h_E}_{|\hX \setminus \Sing(\CG)})\rt) \w \om_{\vep,\RG}^{n-1}
\end{equation}
where $\Sing(\CG)$ is the non-locally free locus of $\CG$. 
Following the same argument, one can also express $\deg_{\pi^\ast \om_\RG}(\CG)$ as 
\[
    \deg_{\pi^\ast \om_\RG}(\CG) 
    = \frac{\ii}{2\pi} \int_{\hX} \Tr_{\End}(\Ta(\CG, {h_E}_{|\hX \setminus \Sing(\CG)})) \w \pi^\ast \om_\RG^{n-1}.
\]

\begin{lem}\label{lem:pullback_stable}
Under Setup~\ref{setup:reflexive_sheaf_bdd_Gauduchon}, $\CE$ is $\mu_{\om_\RG}$-stable if and only if $E$ is $\mu_{\pi^\ast \om_\RG}$-stable.
\end{lem}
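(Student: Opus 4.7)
The plan is to set up a slope-preserving correspondence between proper coherent subsheaves of $\CE$ on $X$ and proper coherent subsheaves of $E$ on $\hX$: pullback and push-forward through $\pi$ will produce such correspondences in both directions, after which the equivalence of stability is immediate by comparing against $\mu_{\om_\RG}(\CE) = \mu_{\pi^\ast \om_\RG}(E)$. This last equality is built into the definition of degree through the resolution $\pi$, since $\pi^\sharp \CE = E$ and the ranks coincide.

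The central input will be the vanishing $\int_{D_i} \pi^\ast T_\RG = 0$ for every irreducible component $D_i$ of the exceptional divisor $D$, where $\pi^\ast T_\RG$ is understood as the trivial extension of the smooth form $\pi^\ast \om_\RG^{n-1}$ from $\pi^{-1}(X^\reg)$ (using \cite[Thm.~B]{Pan_2022}). Since $\pi(D_i) \subset Z$ has codimension $\geq 2$ in $X$, there are two cases: either $D_i \subset \pi^{-1}(X^\sing)$, and the integral vanishes by definition of the trivial extension; or $\pi(D_i) \cap X^\reg$ is an irreducible subvariety of $X^\reg \cap \Sing(\CE)$ of complex dimension $\leq n-2$, and then $\pi^\ast \om_\RG^{n-1}$ vanishes identically on the dense open $D_i \cap \pi^{-1}(X^\reg) \subset D_i$ for rank-of-pullback reasons. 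This is the only genuinely delicate point in the argument, as it is what forces exceptional-divisor contributions to drop out and cleanly distinguishes the hermitian setting from the K\"ahler case.

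For the implication ``$E$ stable $\Rightarrow$ $\CE$ stable'', I would take any proper coherent subsheaf $\CG \subsetneq \CE$ and form $\pi^\sharp \CG = \pi^\ast \CG / \Tor(\pi^\ast \CG)$. The natural map $\pi^\sharp \CG \to E$ is injective because its kernel would be a torsion subsheaf of the torsion-free $\pi^\sharp \CG$, exhibiting $\pi^\sharp \CG$ as a proper coherent subsheaf of $E$ of rank $\rank(\CG) < r$. By the very definition of degree via $\pi$,
\[
    \deg_{\om_\RG}(\CG) = \int_\hX \BCC(\det \pi^\sharp \CG) \w \pi^\ast T_\RG = \deg_{\pi^\ast \om_\RG}(\pi^\sharp \CG),
\]
so the two slopes coincide and the stability of $E$ yields $\mu_{\om_\RG}(\CG) < \mu_{\om_\RG}(\CE)$, as required.

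For the converse direction, given a proper coherent subsheaf $F \subsetneq E$, I would set $\CG := \pi_\ast F$. It is torsion-free on $X$ because $\pi$ is proper and $E$ is torsion-free, it embeds into $\pi_\ast E \hookrightarrow (\pi_\ast E)^{\vee\vee} \simeq \CE$ via reflexive extension across the codimension-$\geq 2$ locus $Z$, and $\rank \CG = \rank F$ since $\pi$ is an isomorphism over $X^\circ$. Because $\pi^\sharp \CG$ and $F$ coincide on $\hX \setminus D$, the determinants $\det \pi^\sharp \CG$ and $\det F$ differ by a divisor of the form $\sum_i a_i D_i$; pairing with $\pi^\ast T_\RG$ and invoking the vanishing from the previous paragraph gives $\deg_{\om_\RG}(\CG) = \deg_{\pi^\ast \om_\RG}(F)$, hence equal slopes. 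The stability of $\CE$ then produces $\mu_{\pi^\ast \om_\RG}(F) < \mu_{\pi^\ast \om_\RG}(E)$, completing the argument.
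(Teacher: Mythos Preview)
Your overall strategy---reduce to the vanishing $\int_{\hX}\BCC(\CO(D_i))\wedge\pi^\ast T_\RG=0$ and then run the pullback/pushforward correspondence of subsheaves---is exactly the paper's approach: after the vanishing is established, the paper simply invokes \cite[Lem.~3.2]{Claudon_Graf_Guenancia_2022}, which is precisely the correspondence you spell out.

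The one place your argument diverges, and where it is weaker, is the justification of the vanishing itself. You argue by restricting $\pi^\ast T_\RG$ to $D_i$ and observing that (case~1) the trivial extension is zero on $\pi^{-1}(X^\sing)$, or (case~2) the rank of $(\pi|_{D_i})^\ast\om^{n-1}$ drops. But the quantity to be shown zero is the cohomological pairing $\int_{\hX}\theta_i\wedge\pi^\ast T_\RG$ for a \emph{smooth} representative $\theta_i$, and passing from this to an integral over $D_i$ requires the Lelong identity $[D_i]-\theta_i=\ddc u$ together with $\int_{\hX}\ddc u\wedge\pi^\ast T_\RG=0$. Since $u=\log|\sigma_i|^2$ is unbounded and $\pi^\ast T_\RG$ is only an $L^\infty$-coefficient current (not smooth across $D$), neither the integration by parts nor the product $[D_i]\wedge\pi^\ast T_\RG$ is automatic; your case analysis does not address this.

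The paper sidesteps the issue entirely by exploiting Theorem~\ref{thm:existence_bdd_Gauduchon}: it approximates $\pi^\ast T_\RG$ by the \emph{smooth} pluriclosed forms $\om_{\vep,\RG}^{n-1}$, for which $\int_{\hX}\BCC(\CO(D_i))\wedge\om_{\vep,\RG}^{n-1}=\int_{D_i}\om_{\vep,\RG}^{n-1}$ is immediate, and then uses the uniform bound $\rho_\vep\le C_G$ to get $\int_{D_i}\om_{\vep,\RG}^{n-1}\le C_G\int_{D_i}\om_\vep^{n-1}\to C_G\int_{D_i}\pi^\ast\om^{n-1}=0$ (the last equality is your rank argument). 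The approximation therefore does the analytic work that your direct argument leaves implicit; this is the main advantage of having the family $(\om_{\vep,\RG})_{\vep}$ available rather than just the limiting current.
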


\begin{proof}
Let $(D_i)_i$ be the irreducible components of the exceptional divisor of $\pi$. 
We first claim that for any $D_i$,
\begin{equation}\label{eq:zero_slope}
    \int_Y \BCC(\CO(D_i)) \w \pi^\ast \om_\RG^{n-1} = 0.
\end{equation}
Note that $\int_Y \BCC(\CO(D_i)) \w \pi^\ast \om_\RG^{n-1} = \lim_{\vep \to 0^+} \int_Y \BCC(\CO(D_i)) \w \om_{\vep,\RG}^{n-1} = \lim_{\vep \to 0^+} \int_{D_i} \om_{\vep,\RG}^{n-1}$ and 
\[
    \int_{D_i} \om_{\vep,\RG}^{n-1} 
    \leq C_G \int_{D_i} \om_\vep^{n-1} \xrightarrow[\vep \to 0^+]{}
    C_G \int_{D_i} \pi^\ast \om^{n-1} = 0.
\]
This confirms \eqref{eq:zero_slope}. 
Then, with \eqref{eq:zero_slope}, following the argument in \cite[Lem.~3.2]{Claudon_Graf_Guenancia_2022}, one can conclude. 
\end{proof}

Denote by $\af_\vep := [\om_{\vep,\RG}^{n-1}]_\RA \in H_{\RA}^{n-1,n-1}(\hX,\BR)$ the Aeppli cohomology class of $\om_{\vep,\RG}^{n-1}$. 
Recall that $\rho_\vep$ converges locally smoothly to $\rho$ on $\hX \setminus D$, and $(\rho_\vep)_{\vep \in (0,1]}$ is uniformly bounded by Theorem~\ref{bigthm:approx_and_conv}. 
Given any $\bt \in H_{\RBC}^{1,1}(\hX,\BR)$, for all representative $b \in \bt$, we have $\int_\hX b \w \om_{\vep,\RG}^{n-1} \xrightarrow[\vep \to 0]{} \int_\hX b \w \pi^\ast \om_\RG^{n-1}$. 
Hence, $\af_\vep$ converges weakly to $\af_0 := [\pi^\ast \om^{n-1}_\RG]_\RA$. 
By the compactness of $\hX$, $H_{\RA}^{n-1,n-1}(\hX,\BR)$ is finite dimensional. 
Thus, $\af_\vep$ converges strongly to $\af_0$ and 
\[
    \lim_{\vep \to 0} \deg_{\om_{\vep,\RG}}(E) = \deg_{\pi^\ast \om_\RG}(E) = \deg_{\om_\RG}(\CE)
    \quad\text{and}\quad
    \lim_{\vep \to 0} \mu_{\om_{\vep,\RG}}(E) = \mu_{\pi^\ast \om_\RG}(E) = \mu_{\om_\RG}(\CE).
\]

\subsubsection{Openness of stability}
In the K\"ahler setting, the openness of stability is a well-known result from the theory of sheaves (see e.g. \cite[Cor.~6.3]{Toma_2021} and \cite[Lem.~3.3 and Prop.~3.4]{Claudon_Graf_Guenancia_2022}). 
However, there is no such result in the hermitian context. 
We shall follow the argument in \cite[Sec.~2]{Bruasse_2001} and \cite[Thm.~1.2]{Chen_Zhang_2024} to obtain the following openness property: 

\begin{thm}\label{thm:openness_stability}
Under Setup~\ref{setup:reflexive_sheaf_bdd_Gauduchon}, if $\CE$ is $\mu_{\om_\RG}$-stable, then $E$ is $\mu_{\om_{\vep,\RG}}$-stable for all $\vep > 0$ sufficiently small. 
\end{thm}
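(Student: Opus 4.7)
The plan is to argue by contradiction using the Uhlenbeck--Yau weak $L^2_1$ subsheaf framework developed in \cite{Bruasse_2001} and extended to the compact hermitian manifold setting in \cite{Chen_Zhang_2024}. Suppose, toward a contradiction, that there is a sequence $\vep_k \to 0^+$ along which $E$ fails to be $\mu_{\om_{\vep_k,\RG}}$-stable. After taking saturations and pigeonholing on the rank, one may extract proper reflexive coherent subsheaves $\CG_k \subset E$ of a common rank $r' \in \{1,\ldots,r-1\}$ with $\mu_{\om_{\vep_k,\RG}}(\CG_k) \geq \mu_{\om_{\vep_k,\RG}}(E)$. The strong convergence $[\om_{\vep,\RG}^{n-1}]_\RA \to [\pi^\ast \om_\RG^{n-1}]_\RA$ in the finite-dimensional space $H^{n-1,n-1}_\RA(\hX,\BR)$, discussed just above the statement of the theorem, gives $\mu_{\om_{\vep_k,\RG}}(E) \to \mu_{\pi^\ast \om_\RG}(E) = \mu_{\om_\RG}(\CE)$ and hence a uniform two-sided bound on $\deg_{\om_{\vep_k,\RG}}(\CG_k)$.

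I would then represent each $\CG_k$ by its $h_E$-orthogonal projection $\pi_k \in L^2_1(\hX, \End E)$ (in the Uhlenbeck--Yau sense) satisfying $\pi_k^2 = \pi_k = \pi_k^\ast$ and $(\Id_E - \pi_k) \db \pi_k = 0$ almost everywhere. The Uhlenbeck--Yau Chern--Weil identity for such weak $L^2_1$ subsheaves reads
\[
    \deg_{\om_{\vep_k,\RG}}(\CG_k) = \frac{\ii}{2\pi} \int_\hX \Tr_{\End}(\pi_k \cdot \Ta(E,h_E)) \w \om_{\vep_k,\RG}^{n-1} - \frac{1}{2\pi} \int_\hX |D''\pi_k|^2_{h_E,\om_{\vep_k,\RG}} \, \om_{\vep_k,\RG}^n.
\]
Writing $\pi_k = \Id_E - \pi_k^\perp$ and applying the curvature lower bound $\ii \Ta(E,h_E) \geq -C \om_{\vep_k,\RG} \otimes \Id_E$ from Corollary~\ref{cor:Mihai_et_al_thm1} to the non-negative factor $\pi_k^\perp$ bounds the curvature integral above by $2\pi \deg_{\om_{\vep_k,\RG}}(E) + C(r - r')$. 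Combined with the lower bound $\deg_{\om_{\vep_k,\RG}}(\CG_k) \geq r' \mu_{\om_{\vep_k,\RG}}(E)$ from Step~1 and the pointwise control $\|\pi_k\|_\infty \leq 1$, this yields a uniform estimate on $\int_\hX |D''\pi_k|^2_{h_E,\om_{\vep_k,\RG}} \om_{\vep_k,\RG}^n$, and hence on $\|\pi_k\|_{L^2_1(\hX,\om_1)}$ via the uniform quasi-isometry between $\om_{\vep,\RG}$ and $\om_1$ provided by Theorem~\ref{bigthm:approx_and_conv}.

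By Rellich--Kondrachov I would extract a subsequence $\pi_k \rightharpoonup \pi_\infty$ weakly in $L^2_1$ and strongly in $L^2$, so that $\pi_\infty$ is again a weak $L^2_1$ projection satisfying $(\Id_E - \pi_\infty)\db \pi_\infty = 0$. The Uhlenbeck--Yau regularity theorem, in the version refined by \cite{Bando_Siu_1994}, then yields a coherent reflexive subsheaf $\CG_\infty \subset E$ represented by $\pi_\infty$; constancy $\Tr_{\End}\pi_k \equiv r'$ together with strong $L^2$ convergence forces $\rk \CG_\infty = r'$, so $\CG_\infty$ is proper. Passing to the limit in the Chern--Weil identity — using smooth convergence of $\om_{\vep_k,\RG}$ on compact subsets of $\hX \setminus D$, dominated convergence for the curvature term (whose integrand has smooth coefficients and uniformly bounded volume factor), and Fatou's lemma for the $|D''\pi|^2$ term — one obtains
\[
    \mu_{\pi^\ast \om_\RG}(\CG_\infty) \geq \liminf_{k \to \infty} \mu_{\om_{\vep_k,\RG}}(\CG_k) \geq \lim_{k\to\infty} \mu_{\om_{\vep_k,\RG}}(E) = \mu_{\pi^\ast \om_\RG}(E),
\]
contradicting the $\mu_{\pi^\ast \om_\RG}$-stability of $E$, which is equivalent to the $\mu_{\om_\RG}$-stability of $\CE$ by Lemma~\ref{lem:pullback_stable}.

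\textbf{Main obstacle.} The principal technical difficulty is the non-K\"ahler degeneration of $\om_{\vep,\RG}$ along the exceptional divisor $D$ together with the non-linear variation of the associated Aeppli classes, which precludes a direct appeal to Douady-space finiteness as in the K\"ahler case (cf.\ \cite{Claudon_Graf_Guenancia_2022, Toma_2021}). Since smooth convergence $\om_{\vep,\RG} \to \pi^\ast \om_\RG$ only holds on compact subsets of $\hX \setminus D$, the passage to the limit in both integrals of the Chern--Weil identity requires exhausting $\hX \setminus D$ by compacts and separately controlling the contribution from shrinking neighborhoods of $D$ through the uniform $L^\infty$ bound on $\rho_\vep$ and the uniformly bounded total mass of $\om_{\vep,\RG}^n$ from Theorem~\ref{bigthm:approx_and_conv}. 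A secondary delicate point is the preservation of rank in the weak limit, handled by the constancy of $r'$ along the subsequence and the strong $L^2$-convergence of $\Tr_{\End} \pi_k$.
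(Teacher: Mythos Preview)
Your overall strategy coincides with the paper's: argue by contradiction, represent the destabilizing subsheaves by $h_E$-orthogonal projections, extract a uniform $L^2_1$ bound from the Chern--Weil identity, pass to a weak limit, and invoke Uhlenbeck--Yau regularity to produce a destabilizing subsheaf for the limiting metric. However, there is a genuine gap in your compactness step. You claim a uniform bound on $\|\pi_k\|_{L^2_1(\hX,\om_1)}$ via a ``uniform quasi-isometry between $\om_{\vep,\RG}$ and $\om_1$ provided by Theorem~\ref{bigthm:approx_and_conv}''. No such quasi-isometry holds: that theorem only bounds $\rho_\vep$, so $\om_{\vep,\RG}$ is uniformly comparable to $\om_\vep = \pi^\ast\om + \vep\homg$, which \emph{degenerates along $D$} as $\vep\to 0$. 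There is no uniform lower bound $\om_{\vep,\RG}\geq c\,\om_1$. Since for a non-negative $(1,1)$-current $T$ the pairing $\int T\wedge\om^{n-1}$ is monotone in $\om$, the estimate $\int_\hX |D''\pi_k|^2_{\om_{\vep_k,\RG}}\om_{\vep_k,\RG}^n\leq C$ gives no control on $\int_\hX |D''\pi_k|^2_{\om_1}\om_1^n$, and global $L^2_1(\hX,\om_1)$ compactness is not available.

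The paper repairs this by going in the only direction that works: from $\pi^\ast\om\leq\om_\vep$ one gets a uniform bound in $L^2_1(\hX\setminus D,\pi^\ast\om)$ with respect to the \emph{degenerate} pullback metric, and weak compactness plus Rellich are applied only on compact subsets of $\hX\setminus D$. The limiting projection $p_\infty$ therefore yields a coherent subsheaf $\CF_\infty$ of $E_{|\hX\setminus D}$ only. An additional extension step is then required and is missing from your outline: the paper descends $\CF_\infty$ to $X^\circ$ via the isomorphism $\pi_{|\hX\setminus D}$, uses that $Z=X\setminus X^\circ$ has codimension $\geq 2$, and applies Siu's extension theorem (via the meromorphic map to a Grassmannian and closure of its graph) to obtain a coherent subsheaf over all of $X$, hence over $\hX$. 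Only after this can one compute $\deg_{\pi^\ast\om_\RG}(\CF_\infty)$ and derive the contradiction with Lemma~\ref{lem:pullback_stable}.
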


\begin{proof}
Arguing by contradiction, we assume that there exists a sequence $\vep_j \to 0$ as $j \to +\infty$ and a sequence of proper subsheaves $\CF_j$ of $E$ such that 
\[
    \mu_{\om_{\vep_j,\RG}}(\CF_j) \geq \mu_{\om_{\vep_j,\RG}}(E)
\]
Since $\rho_\vep$ is uniformly bounded, there is a constant $C>0$ such that the following inequality holds 
\begin{equation}\label{eq:open:deg_lower_bdd}
    \deg_{\om_{\vep_j,\RG}}(\CF_j) 
    \geq \frac{\rk(\CF_j)}{\rk(E)} \deg_{\om_{\vep_j,\RG}}(E) \geq -C
\end{equation}
for all $j$. 
One may assume that $\rk(\CF_j) = r' \in \{1, \cdots, r-1\}$ for all $j$ up to extracting a subsequence. 

\smallskip
Denote by $F_j$ the locally free part of $\CF_j$.
By \eqref{eq_deg_subsheaf}, and the Gauss--Codazzi equation of Chern connections on vector bundles (see e.g. \cite[p.~22-23]{Kobayashi_book} and \cite[p.~88]{Lubke_Teleman_1995})
\begin{equation}\label{eq:open_GC_eqn}
\begin{split}
    \deg_{\om_{\vep_j,\RG}}(\CF_j) 
    &= \int_\hX \Tr_{\End}(\tr_{\om_{\vep_j,\RG}} \ii\Ta(F_j,{h_E}_{|F_j})) \om_{\vep_j,\RG}^n \\
    &= \int_\hX \lt( \Tr_{\End}(p_j \circ \ii\Ta(E,h_E) \circ p_j) - \Tr_{\End}(\ii \pl p_j \w \db p_j)\rt) \w \om_{\vep_j,\RG}^{n-1}
\end{split}
\end{equation}
where $p_j$ is the $h_E$-orthogonal projection $p_j: E \to F_j$ on $\hX \setminus \Sing(\CF_j)$.
Note that $|p_j|_{h_E} = \sqrt{r'}$; we thus have
\begin{equation}\label{eq:open_curv_L1_bdd}
    \int_\hX \Tr_{\End}(p_j \circ \ii\Ta(E,h_E) \circ p_j) \w \om_{\vep,\RG}^{n-1} 
    \leq r' \int_\hX |\Ta(E,h_E) \w \om_\vep^{n-1}|_{h_E}
    \leq C
\end{equation}
for some constant $C > 0$ independent of $\vep$. 
In the above inequality, we use $\om_{\vep,\RG}$ are uniformly quasi-isometric to $\om_\vep$ for all $\vep \in (0,1]$ from Theorem~\ref{bigthm:approx_and_conv}.

\smallskip
Combining \eqref{eq:open:deg_lower_bdd}, \eqref{eq:open_GC_eqn}, \eqref{eq:open_curv_L1_bdd}, we obtain 
\[
    \int_\hX |\db p_j|_{h_E, \om_{\vep_j}}^2 \om_{\vep_j}^n \leq C.
\]
Since $\pi^\ast \om \leq \om_\vep$, one can infer that $(p_j)_j$ are uniformly bounded in $L^2_1(\hX \setminus D, \End(E_{|\hX \setminus D}), \pi^\ast \om, h_E)$. 
The weak compactness, lower semi-continuity of $L^2_1$-norm along weak convergence sequences, and Rellich's theorem show that, after extracting a subsequence, $p_j$ converges weakly in $L^2_{1,\loc}$ and strongly in $L^2_{\loc}$ towards $p_\infty \in L^2_1(X^\circ, \End(E_{|\hX \setminus D}), \pi^\ast \om, h_E)$.
By the regularity of $L^2_1$-subbundle (see \cite{Uhlenbeck_Yau_1986, Popovici_2005}), $p_\infty$ induces a coherent subsheaf $\CF_\infty$ of $E_{|\hX \setminus D}$. 
Since $\pi_{|\hX \setminus D}: \hX \setminus D \to X^\circ$ is an isomorphism and $E_{|\hX \setminus D}$ and $\CE_{|X^\circ}$ are isomorphic, $\CF_\infty' := (\pi_{|\hX \setminus D})_\ast \CF_\infty$ is a coherent subsheaf of $\CE_{|X^\circ}$ on $X^\circ$. 

\smallskip
Following the idea of Uhlenbeck and Yau \cite[Sec.~7]{Uhlenbeck_Yau_1986} (see also \cite[proof of Prop.~3.3]{Chen_2022}), $\CF_\infty'$ can be extended as a coherent subsheaf on $X$ as $Z$ has codimension at least two. 
Since the argument is local, one can consider $\CF'_\infty$ as a subsheaf of $\CO_X^{\oplus N}$ over $U \setminus Z$.
Then $\CF_\infty'$ induces a meromorphic map $f: U \setminus Z \to \Gr(N, N-r')$.
Denote by $\Gm_f$ graph of $f$ in $(U \setminus Z) \times \Gr(N, N-r')$.
By Siu's theroem \cite[bottom of p.~441]{Siu_1975}, the closure of $\Gm_f$, $\overline{\Gm_f}$, is a subvariety in $U \times \Gr(N, N-r')$. 
Let $\pr_1$ and $\pr_2$ be the projections from $\overline{\Gm_f}$ to $U$ and $\Gr(N, N-r')$, respectively. 
Set $\mathfrak{F}$ the tautological bundle on $\Gr(N, N-r')$. 
Then $(\pr_1)_\ast \pr_2^\ast \mathfrak{F}$ provides a coherent extension of $\CF_\infty'$ to $U$ and its inverse image is an extension of $\CF_\infty$ on $\hX$.
Thus,
\begin{align*}
    \deg_{\pi^\ast \om_{\RG}} (\CF_\infty) 
    &= \int_\hX (\Tr_{\End} (p_\infty \circ \ii\Ta(E,h_E) \circ p_\infty) - \Tr_{\End}(\ii \pl p_\infty \w \db p_\infty))\w \pi^\ast\om_\RG^{n-1} \\
    &\geq \frac{\rk(\CF_\infty)}{\rk(E)} \deg_{\pi^\ast \om_\RG}(E).
\end{align*}
However, this contradicts the $\mu_{\pi^\ast \om_\RG}$- stability of $E$.
\end{proof}

\section{Singular Hermite--Einstein metrics on stable sheaves}\label{sec:HE_estimates}
In this section, we prove Theorem~\ref{bigthm:HE_metric}.

\subsection{Harnack estimates} 
We first establish a uniform Harnack-type estimate. 

\begin{lem}\label{lem:Harncak_ineq}
There is a constant $C > 0$ such that for all $\vep \in (0,1]$, if $(\om_{\vep,\RG} + \ddc v) \w \om_{\vep,\RG}^{n-1} \geq 0$, 
\[
    \sup_\hX v \leq C \lt(1 + \int_\hX |v| \om_{\vep,\RG}^n\rt).
\]
\end{lem}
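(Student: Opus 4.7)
The plan is to run a Moser iteration, combining the Gauduchon property of $\om_{\vep,\RG}$ with the uniform Sobolev inequality from Theorem~\ref{bigthm:unif_Sobolev}. First, since $\om_{\vep,\RG}=\rho_\vep^{\frac{1}{n-1}}\om_\vep$ with Gauduchon factors $\rho_\vep$ pinched uniformly between positive constants by Theorem~\ref{bigthm:approx_and_conv}(1), the Sobolev inequality of Theorem~\ref{bigthm:unif_Sobolev} for the family $(\om_\vep)_{\vep\in(0,1]}$ transfers, with a modified but still uniform constant and the same exponent $q\in(1,\frac{n}{n-1})$, to an analogous Sobolev inequality for $(\om_{\vep,\RG})_{\vep\in(0,1]}$.

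Next, the Gauduchon condition $\ddc\om_{\vep,\RG}^{n-1}=0$, together with the fact that on the compact manifold $\hX$ without boundary one has $\int_\hX \dd\eta=\int_\hX \dc\eta=0$ for every $(2n-1)$-form $\eta$ (a consequence of Stokes' theorem applied to $\pl$ and $\db$ separately after bidegree decomposition), yields the key integration-by-parts identity
$$
\int_{\hX}\ddc F\w\om_{\vep,\RG}^{n-1}=0\qquad\text{for every }F\in\CC^\infty(\hX).
$$
Applying this with $F=v_+^p$, $p\geq 2$, and expanding $\ddc(v_+^p)=p\,v_+^{p-1}\ddc v+p(p-1)\,v_+^{p-2}\dd v\w\dc v$, the hypothesis $\ddc v\w\om_{\vep,\RG}^{n-1}\geq -\om_{\vep,\RG}^n$ produces the energy estimate
$$
\int_{\hX}\bigl|\dd(v_+^{p/2})\bigr|_{\om_{\vep,\RG}}^2\,\om_{\vep,\RG}^n\leq\frac{np}{4(p-1)}\int_{\hX}v_+^{p-1}\om_{\vep,\RG}^n\leq C\bigl(1+\|v_+\|_{L^p(\om_{\vep,\RG}^n)}^p\bigr),
$$
with $C$ independent of $\vep$ and of $p\geq 2$ (using $p/(p-1)\leq 2$, $v_+^{p-1}\leq 1+v_+^p$, and the normalization $\int_\hX\om_{\vep,\RG}^n=1$).

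Feeding $v_+^{p/2}$ into the uniform Sobolev inequality produces the Moser-type iteration $\|v_+\|_{L^{pq}}\leq(Cp)^{1/p}\max(1,\|v_+\|_{L^p})$. Iterating along $p_k=2q^k$ and using the convergence of $\sum_{k\geq 0}q^{-k}\log(Cp_k)$ gives $\sup_{\hX}v\leq C(1+\|v_+\|_{L^2(\om_{\vep,\RG}^n)})$. A final AM--GM absorption based on $\|v_+\|_{L^2}^2\leq(\sup_{\hX}v)\int_{\hX}|v|\,\om_{\vep,\RG}^n$ then delivers the desired Harnack-type bound.

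The hard part lies in the uniformity in $\vep$: every constant appearing in the iteration must be independent of $\vep\in(0,1]$. This is precisely what is guaranteed by Theorem~\ref{bigthm:approx_and_conv}(1) (uniform two-sided control on $\rho_\vep$) and Theorem~\ref{bigthm:unif_Sobolev} (uniform Sobolev), without which the Moser scheme would degenerate as $\vep\to 0$. The regularity of $v$ (only quasi-subharmonic a priori) is handled in the standard way by truncating $v$ with $v_M:=\max(v,-M)$, running the iteration for $v_M$ (which still satisfies the same distributional inequality), and passing to the limit $M\to+\infty$ by monotone convergence.
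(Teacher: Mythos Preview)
Your proof is correct and follows essentially the same Moser iteration as the paper, using the Gauduchon identity $\int_{\hX}\ddc F\wedge\om_{\vep,\RG}^{n-1}=0$ together with the uniform Sobolev inequality transferred from $(\om_\vep)$ to $(\om_{\vep,\RG})$ via the uniform bounds on $\rho_\vep$. The only slip is arithmetical: the energy bound should read $\frac{np^2}{4(p-1)}\int_{\hX}v_+^{p-1}\,\om_{\vep,\RG}^n$ rather than $\frac{np}{4(p-1)}$ (so the constant is $O(p)$, not independent of $p$), but since you already wrote the iteration step with the correct factor $(Cp)^{1/p}$, the argument and conclusion are unaffected.
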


\begin{proof}
Recall that from Theorem~\ref{thm:unif_Sobolev_Poincare}, we have obtained a uniform Sobolev constant for $(\om_\vep)_{\vep \in (0,1]}$. 
By Theorem~\ref{thm:existence_bdd_Gauduchon}, the metrics $(\om_{\vep,\RG})_{\vep \in (0,1]}$ and $(\om_\vep)_{\vep \in (0,1]}$ are uniformly quasi-isometric; hence there is also a uniform Sobolev constant for the family of metrics $(\om_{\vep,\RG})_{\vep \in (0,1]}$, i.e. for any $q \in (0, \frac{n}{n-1})$, one can find a constant $C_S' > 0$ so that for all $\vep \in (0,1]$, 
\begin{equation}\label{eq:unif_Sobolev_Gaud}
    \forall f \in L^2_1(\hX,\om_{\vep,\RG}),
    \quad 
    \lt(\int_\hX |f|^{2q} \om_{\vep,\RG}^n\rt)^{1/q} 
    \leq C_S' \lt(\int_\hX |\dd f|_{\om_{\vep,\RG}}^2 \om_{\vep,\RG}^n + \int_\hX |f|^2 \om_{\vep,\RG}^n\rt). 
\end{equation}

\smallskip
With \eqref{eq:unif_Sobolev_Gaud}, the conclusion follows from standard Moser's iteration argument. 
For the reader's convenience, we provide some details below. 
By direct computation, we have
\begingroup
\allowdisplaybreaks
{\small
\begin{align*}
    &\int_\hX |\dd v_+^{\frac{p+1}{2}}|_{\om_{\vep,\RG}}^2 \om_{\vep,\RG}^n 
    = n \int_\hX \dd v_+^{\frac{p+1}{2}} \w \dc v_+^{\frac{p+1}{2}} \w \om_{\vep,\RG}^{n-1}
    = \frac{n (p+1)^2}{4} \int_\hX v_+^{p-1} \dd v_+ \w \dc v_+ \w \om_{\vep,\RG}^{n-1} \\
    &= \frac{n (p+1)^2}{4 p} \int_\hX \dd v_+^p \w \dc v_+ \w \om_{\vep,\RG}^{n-1} 
    = \frac{n (p+1)^2}{4 p} \lt(\int_\hX v_+^p (-\ddc v_+) \w \om_{\vep,\RG}^{n-1} - \int_\hX v_+^p \dd v_+ \w \dc \om_{\vep,\RG}^{n-1}\rt) \\
    &\leq \frac{n (p+1)^2}{4 p} \lt(\int_\hX v_+^p \om_{\vep,\RG}^n - \frac{1}{p+1} \int_\hX \dd v_+^{p+1} \w \dc \om_{\vep,\RG}^{n-1}\rt) 
    = \frac{n (p+1)^2}{4 p} \int_\hX v_+^p \om_{\vep,\RG}^n.
\end{align*}
}%
\endgroup
Then
\begingroup
\allowdisplaybreaks
\begin{align*}
    &\lt(\int_\hX v_+^{(p+1)q} \om_{\vep,\RG}^n\rt)^{1/q}
    \leq C_S' \lt(\int_\hX |\dd v_+^{\frac{p+1}{2}}|_{\om_{\vep,\RG}}^2 \om_{\vep,\RG}^n + \int_\hX v_+^{2(p+1)} \om_{\vep,\RG}^n\rt)\\
    &\leq C_S' \lt(\frac{n (p+1)^2}{4 p} \int_\hX v_+^p \om_{\vep,\RG}^n + \int_\hX v_+^{p+1} \om_{\vep,\RG}^n\rt)
    \leq n (p+1) C_S' \max\lt\{\int_{\hX} v_+^{(p+1)} \om_{\vep,\RG}^n, 1\rt\}.
\end{align*}
\endgroup
By standard Moser's iteration process, we obtain 
\[
    \sup_\hX v 
    \leq \sup_\hX v_+ 
    \leq C \lt(1 + \int_\hX v_+ \om_{\vep,\RG}^n\rt)
    \leq C \lt(1 + \int_\hX |v| \om_{\vep,\RG}^n\rt)
\]
where $C$ is independent of $\vep$ and this conclude the proof.
\end{proof}

\subsubsection*{Some Harnack estimates on $\om_{\vep,\RG}$-Hermite--Einstein metrics}
By the result of Li and Yau \cite{Li_Yau_1987} and Theorem~\ref{thm:openness_stability}, for every sufficiently small $\vep > 0$, there is a Hermite--Einstein metric $h_{\vep}$ on $E$ with respect to $\om_{\vep, \RG}$. 
Namely,
\[
    \tr_{\vep,\RG} \ii \Ta(E,h_\vep) = \ld_\vep \Id_E
\]
where $\ld_\vep = \mu_{\om_{\vep,\RG}}(E) \cdot 2 \pi n$.

\smallskip
Let $(L,h_L) = (\det E, \det h_E)$ and set $\Ta_L := \Ta(L,h_L)$. 
Since $\om_{\vep,\RG}$ is Gauduchon, functions in $\ker (\Dt^\ast_{\vep,\RG})$ are constants. 
Standard Hodge theory shows that $\CC^\infty(\hX, \BR) = \ker(\Dt^\ast_{\vep,\RG}) \oplus \im(\Dt_{\vep,\RG})$ in $L^2(\hX,\om_{\vep,\RG})$. 
Thus, the equation
\begin{equation}\label{eq:f_vep}
    \frac{1}{2 \pi n} \lt(\tr_{\vep,\RG} \ii \Ta_L
    + \Dt_{\vep,\RG} f_\vep\rt)
    = \int_\hX \BCC(E) \w \om_{\vep,\RG}^{n-1} 
    = \frac{r \ld_\vep}{2 \pi n}.
\end{equation}
admits a unique smooth solution $f_\vep$ such that $\int_\hX f_\vep \om_{\vep, \RG}^n = 0$.
The Hermite--Einstein metric $h_\vep$ can be expressed as follows 
\[
    h_\vep = h_E e^{-\frac{1}{r} f_\vep} \exp(s_\vep)
\]
where $s_\vep$ is a trace-free endomorphism of $E$. 
We denote by 
\[
    H_\vep := e^{-\frac{1}{r} f_\vep} \exp(s_\vep) 
    =: e^{-\frac{1}{r} f_\vep} S_\vep \in \Herm^+(E,h_E).
\]

\smallskip
Set $D_i$'s irreducible components of the exceptional divisor $D$. 
For each $i$, we take $\sm_i$ a section of $\CO(D_i)$ cutting out the divisor $D_i$. 
Note that there are positive numbers $(a_i)_i$ so that $\pi^\ast \om + \ddc \sum_i \log |\sm_i|_i^{2a_i} \geq \dt \homg$ for some $\dt > 0$ where $|\cdot|_i$ is some hermitian metric on $\CO(D_i)$. 
Set $\vph := \sum_i \log |\sm_i|_i^{2a_i}$. 
Without loss of generality, one may assume that $\vph \leq -1$. 

\smallskip
Using Lemma~\ref{lem:Harncak_ineq}, we follow similar arguments in \cite[Lem.~3.1 and 3.2]{Paun_et_al_2023_HE_sing} to obtain the following estimates: 

\begin{lem}\label{lem:HE_harnack}
There exists a uniform constant $C > 0$ independent of $\vep$ such that 
\[
    - f_\vep \leq C \lt(1 + \int_\hX |f_\vep| \om_{\vep,\RG}^n \rt), 
    \quad
    \frac{C_L}{\dt} \vph + f_\vep \leq C \lt(1 + \int_\hX |f_\vep| \om_{\vep,\RG}^n\rt),
\]
and 
\[
    \log \Tr_{\End} (H_\vep) \leq C \lt(1 + \int_\hX |\log \Tr_{\End}(H_\vep)| \om_{\vep,\RG}^n\rt),
\]
where $C_L > 0$ is a constant such that $\ii \Ta_L \leq C_L \homg$.
\end{lem}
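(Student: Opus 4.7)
\textbf{Unified strategy.} All three estimates will be reduced to Lemma~\ref{lem:Harncak_ineq} via the following principle: if a function $v$ on $\hX$ (smooth, or quasi-psh with mild singularities along $D$) satisfies $\ddc v\w\om_{\vep,\RG}^{n-1}\geq -C\,\om_{\vep,\RG}^n$ in the sense of currents for a constant $C>0$ independent of $\vep$, then $v/C$ fulfils the hypothesis of Lemma~\ref{lem:Harncak_ineq} and so $\sup_\hX v\leq C'(1+\int_\hX|v|\,\om_{\vep,\RG}^n)$ uniformly in $\vep$. Throughout, I use the uniform boundedness of $\ld_\vep$ (inherited from the convergence of Aeppli classes discussed in Section~\ref{sec:slope_openness}) and the pointwise bound $\pi^\ast\om\leq\om_{\vep,\RG}$, which follows from the normalisation $\rho_\vep\geq 1$ in Theorem~\ref{thm:existence_bdd_Gauduchon}. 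For the second estimate the function is only quasi-psh, so one first regularises $\vph$ by $\max(\vph,-k)$, applies the argument, and lets $k\to+\infty$.

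\textbf{Estimate (1).} Extracting $\Dt_{\om_{\vep,\RG}}f_\vep$ from \eqref{eq:f_vep} gives $\Dt_{\om_{\vep,\RG}}(-f_\vep)=\tr_{\vep,\RG}\ii\Ta_L-r\ld_\vep$. Applying Corollary~\ref{cor:Mihai_et_al_thm1} to $h_E$ and then taking the endomorphism trace yields $\ii\Ta_L\geq -rC\,\om_{\vep,\RG}$, hence $\tr_{\vep,\RG}\ii\Ta_L\geq -nrC$. Combined with the uniform bound on $\ld_\vep$ this gives $\Dt_{\om_{\vep,\RG}}(-f_\vep)\geq -C'$, i.e.\ $\ddc(-f_\vep)\w\om_{\vep,\RG}^{n-1}\geq -\tfrac{C'}{n}\om_{\vep,\RG}^n$, and Lemma~\ref{lem:Harncak_ineq} finishes the job.

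\textbf{Estimate (2).} The key input is the cancellation between $\ddc\vph$ and $\ii\Ta_L$. From $\pi^\ast\om+\ddc\vph\geq\dt\homg$ (as currents) and the smooth bound $\ii\Ta_L\leq C_L\homg$, we deduce $\tfrac{C_L}{\dt}\ddc\vph\geq\ii\Ta_L-\tfrac{C_L}{\dt}\pi^\ast\om$. On the other hand, \eqref{eq:f_vep} rewrites as
\[
\ddc f_\vep\w\om_{\vep,\RG}^{n-1}=\tfrac{r\ld_\vep}{n}\om_{\vep,\RG}^n-\ii\Ta_L\w\om_{\vep,\RG}^{n-1}.
\]
Adding these two identities makes the $\ii\Ta_L$-terms cancel and gives
\[
\ddc\!\lt(\tfrac{C_L}{\dt}\vph+f_\vep\rt)\w\om_{\vep,\RG}^{n-1}\geq -\tfrac{C_L}{\dt}\pi^\ast\om\w\om_{\vep,\RG}^{n-1}+\tfrac{r\ld_\vep}{n}\om_{\vep,\RG}^n,
\]
whose right-hand side is $\geq -C\,\om_{\vep,\RG}^n$ thanks to $\pi^\ast\om\leq\om_{\vep,\RG}$ and the uniform control of $\ld_\vep$. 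Apply Lemma~\ref{lem:Harncak_ineq}.

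\textbf{Estimate (3) and the main obstacle.} The third inequality is the most delicate and is where I expect the main difficulty. Writing $h_\vep=h_E H_\vep$, the curvature formula $\ii\Ta(E,h_\vep)=\ii\Ta(E,h_E)+\db(H_\vep^{-1}\pl_{h_E}H_\vep)$ combined with the Hermite--Einstein equation $\tr_{\vep,\RG}\ii\Ta(E,h_\vep)=\ld_\vep\Id_E$ produces a second-order PDE for $H_\vep$. Pairing this equation against $H_\vep/\Tr_{\End}(H_\vep)$ and invoking a Kobayashi--L\"ubke/Kato-type computation (following \cite[Lem.~3.2]{Paun_et_al_2023_HE_sing}, which in turn goes back to the classical Donaldson argument) yields
\[
\Dt_{\om_{\vep,\RG}}\log\Tr_{\End}(H_\vep)\geq -C,
\]
uniformly in $\vep$, where the constant depends only on the lower bound of $\ii\Ta(E,h_E)$ with respect to $\om_{\vep,\RG}$ (Corollary~\ref{cor:Mihai_et_al_thm1}) and the bound on $\ld_\vep$. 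The main obstacle is keeping every constant uniform in $\vep$: it is crucial here that Corollary~\ref{cor:Mihai_et_al_thm1} provides a curvature bound against $\om_{\vep,\RG}$ itself rather than only against $\pi^\ast\om_\RG$, so that the $\vep$-dependent quasi-isometry factors do not enter. A final application of Lemma~\ref{lem:Harncak_ineq} completes the proof.
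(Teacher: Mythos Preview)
Your proof is correct and follows essentially the same route as the paper: each of the three quantities is shown to satisfy $\Dt_{\om_{\vep,\RG}}v\geq -C$ uniformly in $\vep$ (via Corollary~\ref{cor:Mihai_et_al_thm1} for (1), the cancellation $\ii\Ta_L\leq\tfrac{C_L}{\dt}(\pi^\ast\om+\ddc\vph)$ for (2), and the Siu/Donaldson computation \cite[p.~25]{Siu_book} for (3)), and then Lemma~\ref{lem:Harncak_ineq} is applied. One small caveat on (2): in Section~\ref{sec:HE_estimates} the Gauduchon factors are normalised by $\int_\hX\om_{\vep,\RG}^n=1$ rather than $\inf\rho_\vep=1$, so your justification ``$\pi^\ast\om\leq\om_{\vep,\RG}$ since $\rho_\vep\geq 1$'' is not literally valid---the paper uses the uniform two-sided bound on $\rho_\vep$ to get $\pi^\ast\om\leq C_G\,\om_{\vep,\RG}$ instead; also, to pass from $\int_\hX\bigl|\tfrac{C_L}{\dt}\vph+f_\vep\bigr|\,\om_{\vep,\RG}^n$ to $\int_\hX|f_\vep|\,\om_{\vep,\RG}^n$ in the conclusion you need the uniform bound on $\int_\hX|\vph|\,\om_{\vep,\RG}^n$, which the paper notes explicitly.
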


\begin{proof}
By Theorem~\ref{cor:Mihai_et_al_thm1}, $\Ta_L = \Tr_{\End}\Ta(E,h_E) \geq -C \om_{\vep,\RG}$ for some uniform constant $C > 0$ and thus, $\tr_{\vep,\RG} \Ta_L \geq -Cn$. 
Then the equation \eqref{eq:f_vep} yields 
\begin{equation}\label{eq:-f_vep_ineq}
    \Dt_{\om_{\vep,\RG}}(-f_\vep) \geq \tr_{\vep,\RG} \ii \Ta_L - r \ld_\vep 
    \geq - C_1    
\end{equation}
for some uniform constant $C_1>0$.  
Therefore, the first inequality follows directly from \eqref{eq:-f_vep_ineq} and Lemma~\ref{lem:Harncak_ineq}. 

\smallskip
Recall that $\pi^\ast\om + \ddc \vph \geq \dt \om_\hX$ for some $\dt > 0$. 
Let $C_L > 0$ be a constant so that $\ii \Ta_L \leq C_L \homg$; hence, $\ii \Ta_L \leq \frac{C_\Ta}{\dt} (\pi^\ast\om + \ddc \vph)
\leq \frac{C_L}{\dt} (C_G \om_{\vep,\RG} + \ddc \vph)$.
This implies that
\[
    \tr_{\vep,\RG} \ii \Ta_L 
    \leq \frac{C_L}{\dt} (C_G n + \Dt_{\vep,\RG} \vph).
\]
By \eqref{eq:f_vep}, we get 
\begin{equation}\label{eq:log|s_D|_ineq}
    \Dt_{\vep,\RG}\lt(\frac{C_L}{\dt} \vph + f_\vep\rt) \geq n \ld_\vep - \frac{C_L C_G n}{\dt} \geq -C_2
\end{equation}
for some uniform constant $C_2 > 0$.
Note that $\vph$ is quasi-plurisubharmonic, $\int_{\hX} |\vph| \om_{\vep,\RG}^n$ is uniformly bounded for all $\vep \in [0,1]$.
Again, Lemma~\ref{lem:Harncak_ineq}, \eqref{eq:log|s_D|_ineq} and the uniform integrability of $\int_{\hX} |\vph| \om_{\vep,\RG}^n$ yield the second inequality. 

\smallskip
Following the same computation as in \cite[p.~25]{Siu_book}, and the Hermite--Einstein condition, we have
\begingroup
\allowdisplaybreaks
\begin{equation}\label{eq:Siu87_p25_0}
{\small
\begin{split}
    &\Dt_{\vep,\RG} \Tr_{\End}(H_\vep) \\
    &= \Tr_{\End}(\tr_{\vep,\RG} \ii\Ta(E,h_E) \circ H_\vep) 
    - \Tr_{\End}(\tr_{\vep,\RG} \ii\Ta(E,h_\vep) \circ H_\vep)
    + \Tr_{\End}(\tr_{\vep,\RG} \ii [D' H_\vep \circ H_\vep^{-1} \circ (\db_E H_\vep)]) \\
    &= \Tr_{\End}(\tr_{\vep,\RG} \ii\Ta(E,h_E) \circ H_\vep) 
    - \ld_\vep \Tr_{\End}(H_\vep) 
    + \Tr_{\End}(\tr_{\vep,\RG} \ii[D' H_\vep \circ H_\vep^{-1} \circ (\db_E H_\vep)]), 
\end{split}
}%
\end{equation}
\endgroup
and also,
\begin{equation}\label{eq:Siu87_p25_1}
{\small
\begin{split}
    &\Dt_{\vep,\RG} \log \Tr_{\End}(H_\vep) 
    = \frac{\Dt_{\vep,\RG} \Tr_{\End}(H_\vep)}{\Tr_{\End}(H_\vep)} 
    - \frac{|\pl \Tr_{\End}(H_\vep)|_{\om_{\vep,\RG}}^2}{\Tr_{\End}(H_\vep)^2}\\
    &= \frac{\Tr_{\End}(\tr_{\om_\vep,\RG} \ii\Ta(E,h_E) \circ H_\vep)}{\Tr_{\End}(H_\vep)} 
    - \ld_\vep 
    + \frac{\Tr_{\End}(\tr_{\vep,\RG} \ii[D' H_\vep \circ H_\vep^{-1} \circ (\db_E H_\vep)])}{\Tr_{\End}(H_\vep)} 
    - \frac{|\pl \Tr_{\End}(H_\vep)|_{\om_{\vep,\RG}}^2}{\Tr_{\End}(H_\vep)^2}.
\end{split}
}%
\end{equation}
Using Cauchy--Schwarz inequality (cf. \cite[p.~25]{Siu_book} for details), the last two terms satisfy 
\begin{equation}\label{eq:Siu87_p25_2}
    \frac{\Tr_{\End}(\tr_{\vep,\RG} [D' H_\vep \circ H_\vep^{-1} \circ (\db_E H_\vep)])}{\Tr_{\End}(H_\vep)} 
    - \frac{|\pl \Tr_{\End}(H_\vep)|_{\om_{\vep,\RG}}^2}{\Tr_{\End}(H_\vep)^2} \geq 0.
\end{equation}
Then combining \eqref{eq:Siu87_p25_1}, \eqref{eq:Siu87_p25_2} and Theorem~\ref{cor:Mihai_et_al_thm1}, one can obtain 
\begin{equation}\label{eq:Siu87_p25_3}
    \Dt_{\vep,\RG} \log \Tr_{\End} (H_\vep) 
    \geq \frac{\Tr_{\End}(\tr_{\vep,\RG} \ii\Ta(E,h_E) \circ H_\vep)}{\Tr_{\End}(H_\vep)} 
    - \ld_\vep 
    \geq -Cnr - \ld_\vep \geq 
    - C_3
\end{equation}
for a uniform constant $C_3 > 0$, and this allows us to conclude by Lemma~\ref{lem:Harncak_ineq}.
\end{proof}

\subsection{Uniform $L^\infty$-estimates}
In this section, we extend \cite[Thm.~4.1]{Paun_et_al_2023_HE_sing} to the hermitian setting. 

\begin{thm}\label{thm:HE_Linfty_est}
There exists a constant $C > 0$ such that for all $\vep > 0$ sufficiently small,
\[
    \Tr_{\End}(H_\vep) \leq C, 
    \quad -C \leq f_\vep \leq -C \vph
    \quad\text{and}\quad
    \int_\hX \frac{|D' \log H_\vep|^2}{-\vph} \om_\vep^n \leq C.
\]
\end{thm}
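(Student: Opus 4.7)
The plan is to upgrade the three implicit Harnack bounds in Lemma~\ref{lem:HE_harnack} to absolute pointwise estimates via a contradiction argument against stability, and then derive the weighted integral bound by a separate energy identity.

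\textbf{Step 1: Uniform $L^1$-bounds via stability.} I would first show that
\[
    M_\vep := \|f_\vep\|_{L^1(\om_{\vep,\RG}^n)} + \|\log \Tr_{\End} H_\vep\|_{L^1(\om_{\vep,\RG}^n)}
\]
is uniformly bounded in $\vep$, by contradiction. If $M_{\vep_k} \to +\infty$ along a sequence $\vep_k \to 0$, the normalizations $\widetilde f_k := f_{\vep_k}/M_{\vep_k}$ and $\widetilde s_k := s_{\vep_k}/M_{\vep_k}$ inherit from Lemma~\ref{lem:HE_harnack} the bounds $-C/M_k \leq \widetilde f_k \leq (C_L/\dt)(-\vph)/M_k + C/M_k$ and similarly for $\widetilde s_k$. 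Since $e^{-\af\vph}$ is integrable for some $\af>0$ and $|\vph|$ is uniformly integrable against $\om_{\vep,\RG}^n$, these normalized sequences are bounded in $L^\infty_{\loc}(\hX\setminus D)$ and, via the uniform Sobolev/Poincar\'e inequalities of Theorem~\ref{thm:unif_Sobolev_Poincare}, in $L^2_1$ on compact subsets. Rellich's theorem together with tail estimates on tubular neighborhoods of $D$ upgrade convergence to $L^1(\hX)$, producing a limit pair $(\widetilde f_\infty, \widetilde s_\infty)$ of total $L^1$-norm one.

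\textbf{Step 2: Contradiction with stability.} The limit $\widetilde f_\infty$ has zero mean (inherited from $\int f_\vep \om_{\vep,\RG}^n = 0$) and satisfies $\Dt_{\pi^\ast\om_\RG}\widetilde f_\infty = 0$ on $\hX\setminus D$, hence is a constant and therefore identically zero. For the trace-free self-adjoint $\widetilde s_\infty$, I would apply the Uhlenbeck--Yau spectral construction as in \cite{Uhlenbeck_Yau_1986, Bando_Siu_1994, Chen_2022}: a spectral projection $p = \chi(\widetilde s_\infty)$ provides a weakly holomorphic $L^2_1$-subbundle of $E$ on $\hX\setminus D$, extended by Popovici's regularity \cite{Popovici_2005} to a coherent subsheaf of $E$. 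Pushing forward via Lemma~\ref{lem:pullback_stable} produces a destabilizing subsheaf of $\CE$ with respect to $\om_\RG$, contradicting Theorem~\ref{thm:openness_stability}. Hence $M_\vep$ is uniformly bounded, and the three inequalities in Lemma~\ref{lem:HE_harnack} immediately yield $-C \leq f_\vep \leq -C'\vph$ (using $\vph\leq-1$) and $\Tr_{\End} H_\vep \leq C$, which are the first two conclusions.

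\textbf{Step 3: Weighted $L^2$-estimate on $D'\log H_\vep$.} The identities \eqref{eq:Siu87_p25_1}--\eqref{eq:Siu87_p25_3} show that $\Dt_{\vep,\RG}\log\Tr_{\End} H_\vep$ dominates a positive multiple of $|D'\log H_\vep|^2_{h_E,\om_{\vep,\RG}}/\Tr_{\End} H_\vep$ minus a uniformly bounded curvature term controlled by Corollary~\ref{cor:Mihai_et_al_thm1}. I would multiply through by the positive weight $-1/\vph$ and integrate against $\om_{\vep,\RG}^n$; integration by parts using the Gauduchon condition $\ddc\om_{\vep,\RG}^{n-1}=0$ produces cross-terms of the shape $\int(-\vph)^{-2}\dd\vph\w\dc(\log\Tr_{\End} H_\vep)\w\om_{\vep,\RG}^{n-1}$, which are absorbed into the left-hand side via Cauchy--Schwarz. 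The remaining terms are controlled using $\pi^\ast\om+\ddc\vph\geq\dt\homg$, the Step~2 pointwise bounds, and the uniform integrability of $|\vph|$. Finally, the quasi-isometry of $\om_\vep$ and $\om_{\vep,\RG}$ converts the integral to the stated $\om_\vep^n$-form.

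\textbf{Main obstacle.} The principal difficulty lies in Step~2: making the Uhlenbeck--Yau spectral construction work uniformly along the degenerating family of Gauduchon metrics on $\hX$ and descending the resulting destabilizer from $\hX$ to $X$. The uniform Sobolev and Poincar\'e inequalities from Theorem~\ref{thm:unif_Sobolev_Poincare} are indispensable here, as they supply the functional-analytic compactness needed for extracting the weak $L^2_1$-limit uniformly in $\vep$ and bypass the global Monge--Amp\`ere-based Harnack route of \cite{Paun_et_al_2023_HE_sing} that is unavailable in the non-K\"ahler setting.
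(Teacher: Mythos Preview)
Your overall architecture (Harnack $\Rightarrow$ $L^1$-bound $\Rightarrow$ Simpson/Uhlenbeck--Yau contradiction, then a separate energy estimate) matches the paper's, but the execution differs in three places, one of which is a genuine gap.

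\textbf{The $f_\vep$-bound needs no contradiction.} The paper handles $f_\vep$ directly: since $\int_\hX f_\vep\,\om_{\vep,\RG}^n=0$, the uniform Poincar\'e inequality (Theorem~\ref{thm:unif_Sobolev_Poincare}) together with the integration-by-parts identity $\int|\dd f_\vep|^2=-\int f_\vep\,\Dt_{\vep,\RG}f_\vep=\int f_\vep(\tr_{\vep,\RG}\ii\Ta_L-r\ld_\vep)$, and Lemma~\ref{lem:HE_harnack} to bound $\sup|f_\vep|$ inside the right-hand side, gives $\int|f_\vep|^2\leq C'(1+\int|f_\vep|)$ and hence a uniform $L^2$-bound. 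Folding $f_\vep$ into the contradiction argument is unnecessary.

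\textbf{Step~1--2 has a gap in the gradient bound.} You assert that the normalized $\widetilde s_k$ are bounded in $L^2_1$ on compact subsets ``via the uniform Sobolev/Poincar\'e inequalities''. But Sobolev and Poincar\'e control $L^{2q}$ or $L^2$ norms \emph{by} the $L^2_1$ norm, not the reverse; they cannot produce a gradient bound. The paper's contradiction argument (Appendix~\ref{sec:pf_claim}) obtains this from the energy identity of Proposition~\ref{prop:HE_gradlog}: rescaling $u_k=\dt_k\eta_k$ with $\dt_k=\lt(\int\log\Tr_{\End}S_k\rt)^{-1}$, one has $\frac{1}{\dt_k}\Psi(u_k/\dt_k)\geq\Phi(u_k)$ on any compact $K\Subset\hX\setminus D$ for smooth $\Phi$ satisfying $\Phi(a,b)\leq(a-b)^{-1}$ when $a>b$, whence $\int_K|D'u_k|^2\leq C$ from the curvature term. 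Without this mechanism your weak-$L^2_1$ compactness is unsupported.

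\textbf{Step~3 takes a different route and misreads \eqref{eq:Siu87_p25_1}--\eqref{eq:Siu87_p25_3}.} Inequality~\eqref{eq:Siu87_p25_2} asserts non-negativity of a \emph{difference} of two terms, which the paper uses to discard both and arrive at~\eqref{eq:Siu87_p25_3}; it does not give a lower bound for $\Dt_{\vep,\RG}\log\Tr_{\End}H_\vep$ by a positive multiple of $|D'\log H_\vep|^2/\Tr_{\End}H_\vep$ (and $\Tr_{\End}[D'H\circ H^{-1}\circ\db_E H]$ is not $|D'\log H|^2$ for non-commuting endomorphisms). The paper's route is cleaner and again goes through Proposition~\ref{prop:HE_gradlog}: once $\Tr_{\End}H_\vep\leq C$ is known, the eigenvalue bounds $C\vph\otimes\Id_E\leq\eta_\vep\leq C\otimes\Id_E$ give $\ld_{\max}-\ld_{\min}\leq-2C\vph$, so pointwise $\Psi(\eta_\vep)\geq\frac{1-e^{2C\vph}}{-2C\vph}\geq\frac{1-e^{-2C}}{-2C\vph}$. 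Substituting into the identity $\int\langle\Psi(\eta_\vep)D'\eta_\vep,D'\eta_\vep\rangle=-\int\Tr_{\End}(\tr_{\vep,\RG}\ii\Ta(E,h_E)\circ\eta_\vep)$ and bounding the right side by $C\int_\hX(-\vph)\,|\Ta(E,h_E)\w\om_{\vep,\RG}^{n-1}|\leq C'$ yields the weighted estimate directly, with no integration by parts against $-1/\vph$ and no cross-terms to absorb.
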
 

Using \cite[Prop.~3.1]{Nie_Zhang_2018}, we show the following result: 

\begin{prop}\label{prop:HE_gradlog}
Let $\eta_\vep = \log H_\vep$ be the logarithm of the endomorphism defining Hermite--Einstein metric $h_\vep^{\HE}$ (note that $h_\vep^{\HE} = h_E H_\vep$).
Then 
\[
    \int_\hX \Tr_{\End}(\tr_{\vep,\RG} \ii\Ta(E,h_E) \circ \eta_\vep) \om_{\vep,\RG}^n
    + \int_\hX \iprod{\Psi(\eta_\vep) D' \eta_\vep}{D' \eta_\vep}_{h_E,\om_{\vep,\RG}} \om_{\vep,\RG}^n
    = 0,
\]
where $\Psi$ denotes the smooth function 
\[
    \Psi(x,y) 
    = \frac{\exp(x-y) - 1}{x - y}. 
\]
\end{prop}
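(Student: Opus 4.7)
The strategy is to test the Hermite--Einstein equation against $\eta_\vep$, decompose the curvature by the conformal change formula, and integrate by parts using the Gauduchon condition. Starting from the pointwise identity $\tr_{\vep,\RG}\ii\Ta(E, h_\vep^{\HE}) = \ld_\vep \Id_E$, I compose on the right with $\eta_\vep$, apply $\Tr_{\End}$, and integrate against $\om_{\vep,\RG}^n$. Since $H_\vep = e^{-f_\vep/r} S_\vep$ with $\Tr_{\End}\log S_\vep = 0$ and $\int_\hX f_\vep\,\om_{\vep,\RG}^n = 0$, the contribution $\ld_\vep \int_\hX \Tr_{\End}(\eta_\vep)\,\om_{\vep,\RG}^n = -\ld_\vep \int_\hX f_\vep\,\om_{\vep,\RG}^n$ from the right-hand side vanishes.

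I then apply the curvature change formula: with $A := H_\vep^{-1} D' H_\vep$, one has $\Ta(E, h_\vep^{\HE}) = \Ta(E, h_E) + \db A$, reducing the identity to
\[
n\ii \int_\hX \Tr_{\End}[\db A \circ \eta_\vep]\w \om_{\vep,\RG}^{n-1} = \int_\hX \iprod{\Psi(\eta_\vep) D'\eta_\vep}{D'\eta_\vep}_{h_E, \om_{\vep,\RG}}\,\om_{\vep,\RG}^n.
\]
Leibniz yields $\Tr_{\End}[\db A \cdot \eta_\vep] = \db\Tr_{\End}[A \eta_\vep] + \Tr_{\End}[A \w \db\eta_\vep]$. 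Since $\eta_\vep$ commutes with $H_\vep$, trace cyclicity together with the Duhamel expansion $H_\vep^{-1} D' H_\vep = \int_0^1 e^{-s\eta_\vep}(D'\eta_\vep)e^{s\eta_\vep}\,ds$ give the algebraic identity $\Tr_{\End}[A\eta_\vep] = \Tr_{\End}[\eta_\vep D'\eta_\vep] = \tfrac{1}{2}\pl\Tr_{\End}[\eta_\vep^2]$, so $\db\Tr_{\End}[A\eta_\vep] = -\tfrac{1}{2}\pl\db\Tr_{\End}[\eta_\vep^2]$. The Gauduchon condition $\ddc\om_{\vep,\RG}^{n-1} = 0$ then gives $\int_\hX \pl\db\Tr_{\End}[\eta_\vep^2]\w \om_{\vep,\RG}^{n-1} = 0$ by the standard IBP (which is the defining property of Gauduchon metrics), so the $\db\Tr_{\End}[A\eta_\vep]$ contribution is eliminated.

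The remaining term is handled by spectral calculus. In a pointwise $h_E$-orthonormal eigenframe of $\eta_\vep$ with eigenvalues $(\ld_a)$, the Dyson formula gives the matrix identity $A_{ab} = \Psi(\ld_a,\ld_b)\,(D'\eta_\vep)_{ab}$ (with $\Psi$ extending smoothly across the diagonal $\ld_a = \ld_b$). Self-adjointness of $\eta_\vep$ with respect to $h_E$ forces $(\db\eta_\vep)_{ba} = \overline{(D'\eta_\vep)_{ab}}$, so
\[
n\ii\,\Tr_{\End}[A \w \db\eta_\vep]\w \om_{\vep,\RG}^{n-1} = n\ii \sum_{a,b} \Psi(\ld_a,\ld_b)\,(D'\eta_\vep)_{ab}\w \overline{(D'\eta_\vep)_{ab}}\w \om_{\vep,\RG}^{n-1},
\]
which is exactly $\iprod{\Psi(\eta_\vep)D'\eta_\vep}{D'\eta_\vep}_{h_E,\om_{\vep,\RG}}\,\om_{\vep,\RG}^n$ by the very definition of the bundle operator $\Psi(\eta_\vep)$. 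The main obstacle is the spectral-calculus step: verifying smoothness across the closed locus where $\eta_\vep$ has coinciding eigenvalues (handled by symmetry and analyticity of $\Psi$) and matching the eigenframe computation with the invariantly-defined pairing on $\End(E)$-valued $(1,0)$-forms. The Gauduchon integration by parts, which could have been delicate in the non-K\"ahler setting, becomes routine thanks to the fortunate identity $\Tr_{\End}[A\eta_\vep] = \tfrac{1}{2}\pl\Tr_{\End}[\eta_\vep^2]$: the residual term reduces to $\int \pl\db g\w \om_{\vep,\RG}^{n-1}$ for the scalar $g = \Tr_{\End}[\eta_\vep^2]$, which vanishes by the definition of Gauduchon.
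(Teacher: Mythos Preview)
Your proposal is correct and follows essentially the same route as the paper: test the Hermite--Einstein equation against $\eta_\vep$, use the curvature change formula $\Ta(E,h_\vep^{\HE}) = \Ta(E,h_E) + \db_E(H_\vep^{-1}D'H_\vep)$, integrate by parts, exploit the scalar identity $\Tr_{\End}(H_\vep^{-1}D'H_\vep\circ\eta_\vep) = \tfrac{1}{2}\pl\Tr_{\End}(\eta_\vep^2)$ together with the Gauduchon condition to kill the torsion contribution, and finish with an eigenframe computation identifying the remaining term with $\iprod{\Psi(\eta_\vep)D'\eta_\vep}{D'\eta_\vep}$. The only cosmetic differences are that the paper organizes the integration by parts so that the torsion term $\db\om_{\vep,\RG}^{n-1}$ appears explicitly before being eliminated (whereas you absorb it directly into $\int\pl\db g\wedge\om_{\vep,\RG}^{n-1}=0$), and that the paper writes out the eigenframe identity using the connection form $(A^\af_\bt)$ rather than the Duhamel expansion; the content is identical.
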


We recall the notations: 
Let $\psi: \BR \to \BR$ and $\Psi: \BR \times \BR \to \BR$ be two smooth functions. 
Fix $s$ a smooth section of $\End(E)$, which is hermitian with respect to $h_E$. 
On a coordinate chart $U \subset \hX$, one can find unitary frame $(e_\af)_{\af \in \{1, \cdots, r\}}$ diagonalizing $s$, i.e. $s = \sum_\af \ld_\af e_\af \otimes e^\af$. 
Then set 
\[
\psi(s) = \sum_\af \psi(\ld_\af) e_\af \otimes e^\af 
\quad\text{and}\quad
    \Psi(s) A := \sum_{\af,\bt} \Psi(\ld_\af, \ld_\bt) A^\af_\bt e_\af \otimes e^\bt,  
\]
on $U$, for an $\End(E)$-valued $(p,q)$-form $A \underset{\loc.}{=} \sum_{\af,\bt} A^\af_\bt e_\af \otimes e^\bt$. 
Note that the above definitions induced global endomorphisms of $E$.

\begin{proof}[Proof of Proposition~\ref{prop:HE_gradlog}]
Recall that from the Hermite--Einstein equation, we have
\[
    \tr_{\vep,\RG} \ii \Ta(E,h_E) + \tr_{\vep,\RG} \ii [\db_E (H_\vep^{-1} \circ D' H_\vep)] 
    = \ld_\vep \Id_E.
\]
Note that $\eta_\vep = \frac{-f_\vep}{r} \otimes \Id_E + s_\vep$ and $\int_\hX \Tr_{\End}(\eta_\vep) \om_{\vep,\RG}^n = - \int_\hX f_\vep \om_{\vep,\RG}^n = 0$. 
Hence, 
\begin{align*}
    &\int_{\hX} \Tr_{\End}\lt(\tr_{\vep,\RG} \ii \Ta(E,h_E) \circ \eta_\vep\rt) \om_{\vep,\RG}^n 
    + \int_\hX \Tr_{\End}\lt(\tr_{\vep,\RG} \ii[\db_E(H_\vep^{-1} \circ D' H_\vep)] \circ \eta_\vep\rt) \om_{\vep,\RG}^n \\
    &= \int_{\hX} \Tr_{\End}\lt(\ld_\vep \Id_E \circ \eta_\vep\rt) \om_{\vep,\RG}^n 
    = \ld_\vep \int_\hX \Tr_{\End}(\eta_\vep) \om_{\vep,\RG}^n
    = 0.
\end{align*}
By Stokes' theorem, 
\begingroup
\allowdisplaybreaks
\begin{align*}
    &\int_\hX \Tr_{\End}\lt(\tr_{\vep,\RG} (\db_E (H_\vep^{-1} \circ D' H_\vep)) \circ \eta_\vep\rt) \om_{\vep,\RG}^n \\
    &= n \int_\hX \db \Tr_{\End} \lt((H_\vep^{-1} \circ D' H_\vep) \circ \eta_\vep \w \om_{\vep,\RG}^{n-1}\rt) 
    + n \int_\hX \Tr_{\End} \lt((H_\vep^{-1} \circ D' H_\vep) \w \db_E \lt(\eta_\vep \om_{\vep,\RG}^{n-1}\rt)\rt) \\
    &= n \int_\hX \Tr_{\End} \lt((H_\vep^{-1} \circ D' H_\vep) \w \db_E \eta_\vep \w \om_{\vep,\RG}^{n-1}\rt) 
    + n \int_\hX \Tr_{\End} \lt((H_\vep^{-1} \circ D' H_\vep) \circ \eta_\vep \w \db \om_{\vep,\RG}^{n-1} \rt).
\end{align*}
\endgroup
From the standard computation (see e.g. \cite[(3.8)]{Nie_Zhang_2018}), one has $\Tr_{\End}(H_\vep^{-1} \circ D' H_\vep \circ \eta_\vep) = \Tr_{\End}(\eta_\vep \circ D' \eta_\vep)$.
Since $\om_{\vep,\RG}$ is Gauduchon, 
{\small
\[
    \int_\hX \Tr_{\End}\lt((H_\vep^{-1} \circ D' H_\vep) \circ \eta_\vep \w \db \om_{\vep,\RG}^{n-1}\rt) 
    = \frac{1}{2} \int_\hX \pl \Tr_{\End}(\eta_\vep^2) \w \db \om_{\vep,\RG}^{n-1}
    = -\frac{1}{2} \int_\hX \Tr_{\End}(\eta_\vep^2) \ddb \om_{\vep,\RG}^{n-1} = 0,
\]
}%
and thus, 
\[
    \int_\hX \Tr_{\End} \lt(\tr_{\vep,\RG} [\db(H_\vep^{-1} \circ D' H_\vep)] \circ \eta_\vep\rt) \om_{\vep,\RG}^n 
    = \int_\hX \Tr_{\End} \lt((H_\vep^{-1} \circ D' H_\vep) \w \db_E \eta_\vep \w \om_{\vep, \RG}^{n-1}\rt).
\]

\smallskip
Let $(e_\af)_\af$ be a local unitary frame and $(A^\af_\bt)_{\af,\bt}$ be the connection form of $D'$ with respect to $(e_\af)_\af$. 
Write $\eta_\vep = \sum_\af \ld_\af e_\af \otimes e^\af$. 
Then we have
\[
    H_\vep^{-1} \circ D' H_\vep 
    = \sum_\af \pl \ld_\af e_\af \otimes e^\af 
    + \sum_{\af,\bt} (\exp(\ld_\bt - \ld_\af) - 1) A_\bt^\af e_\af \otimes e^\bt
\]
and 
\[
    \db_E \eta_\vep 
    = \sum_\af \db \ld_\af e_\af \otimes e^\af 
    + \sum_{\af,\bt} (\ld_\bt - \ld_\af) \bar{A}_\bt^\af e_\af \otimes e^\bt.
\]
Then 
\begin{align*}
    &\tr_{\vep,\RG} \Tr_{\End}(H_\vep^{-1} \circ D' H_\vep \w \db \eta_\vep)
    = \sum_\af |\pl \ld_\af|_{\om_{\vep,\RG}}^2  
    + \sum_{\af,\bt} (\exp(\ld_\af - \ld_\bt) - 1) (\ld_\af - \ld_\bt) |A_\bt^\af|_{\om_{\vep,\RG}}^2\\
    &= \sum_\af |\pl \ld_\af|_{\om_{\vep,\RG}}^2  
    + \sum_{\af,\bt} \frac{(\exp(\ld_\af - \ld_\bt) - 1)}{\ld_\af - \ld_\bt} (\ld_\af - \ld_\bt)^2 |A_\bt^\af|_{\om_{\vep,\RG}}^2 
    = \iprod{\Psi(\eta_\vep) D' \eta_\vep}{D' \eta_\vep}_{h_E, \om_{\vep,\RG}}.
\end{align*}
This completes the proof.
\end{proof}

\begin{proof}[Proof of Theorem~\ref{thm:HE_Linfty_est}]
Since $\om_{\vep,\RG}$ is Gauduchon, by Stokes' theorem, for any smooth function $\chi$ and any $\vep \in (0,1]$, we have the following
\begin{equation}\label{eq:HE_infty_IPP}
    \int_\hX |\dd \chi|_{\om_{\vep,\RG}}^2 \om_{\vep,\RG}^n
    = - n \int_\hX \chi \ddc \chi \w \om_{\vep,\RG}^{n-1}
    = - \int_\hX \chi (\Dt_{\vep,\RG} \chi) \om_{\vep,\RG}^n.
\end{equation}
Recall that from \eqref{eq:f_vep}, we have
\[
    \Dt_{\vep,\RG} f_\vep 
    = r \ld_\vep - \tr_{\vep,\RG} \ii \Ta_L 
\]
and $\int_\hX f_\vep \om_{\vep,\RG}^n = 0$. 
By the uniform Poincar\'e inequality in Theorem~\ref{thm:unif_Sobolev_Poincare} and \eqref{eq:HE_infty_IPP},
\begin{align*}
    \int_\hX |f_\vep|^2 \om_{\vep,\RG}^n 
    &\leq C_P \int_\hX |\dd f_\vep|^2_{\om_{\vep,\RG}} \om_{\vep,\RG}^n 
    = -C_P \int_\hX f_\vep (\Dt_{\vep,\RG} f_\vep) \om_{\vep,\RG}^n \\
    &= n C_P \int_\hX f_\vep \ii \Ta_L \w \om_{\vep,\RG}^{n-1} - r \ld_\vep C_P \int_\hX f_\vep \om_{\vep,\RG}^n\\
    &\leq n C_P \int_\hX \max\{\sup(-f_\vep), \sup(f_\vep)\} C_L \homg \w \om_{\vep,\RG}^{n-1}
    + r |\ld_\vep| C_P \int_\hX |f_\vep| \om_\vep^n,
\end{align*}
where $C_L > 0$ is a constant such that $\ii \Ta_L \leq C_L \homg$. 
Then Lemma~\ref{lem:HE_harnack} implies that
\begin{equation}\label{eq:f_vep_int_bdd}
\begin{split}
    \int_\hX |f_\vep|^2 \om_{\vep,\RG}^n 
    &\leq n C_P \int_\hX \lt(2C + 2C \int_\hX |f_\vep| \om_{\vep,\RG}^n - \frac{C_L}{\dt} \vph\rt) C_L \homg \w \om_{\vep,\RG}^{n-1}
    + r |\ld_\vep| C_P \int_\hX |f_\vep| \om_{\vep,\RG}^n\\
    &\leq C' \lt(1 + \int_\hX |f_\vep| \om_{\vep,\RG}^n\rt),  
\end{split}
\end{equation}
for a constant $C' > 0$ independent of $\vep$. 
In the second inequality, we use the quasi-plurisubharmonicity of $\vph$ and Theorem~\ref{thm:existence_bdd_Gauduchon} to have a uniform integral bound on $\int_\hX |\vph| \homg \w \om_{\vep,\RG}^{n-1}$. 
By H\"older inequality, \eqref{eq:f_vep_int_bdd} shows that
\[
    \int_\hX |f_\vep|^2 \om_{\vep,\RG}^n \leq C' + C' \lt(\int_\hX |f_\vep|^2 \om_{\vep,\RG}^n\rt)^{1/2}
\]
and hence $(\int_\hX |f_\vep|^2 \om_\vep^n)$ is uniformly bounded. 
Then Lemma~\ref{lem:HE_harnack} again implies that
\begin{equation}\label{eq:HE_infty_f_bdd}
    -C_1 \leq f_\vep \leq -C_1 \vph,
\end{equation}
for some uniform constant $C_1 > 0$ and this shows the desired estimates on $f_\vep$ in Theorem~\ref{thm:HE_Linfty_est}.

\smallskip
We now proceed to control $\Tr_{\End} H_\vep$. 
To achieve this, we make the following claim, which we shall verify in Appendix~\ref{sec:pf_claim} using Simpson's contradiction argument. 

\begin{claim}\label{claim:int_bdd_logTrS}
There exist a constant $C > 0$ independent of $\vep > 0$ such that
\[
    \int_\hX \log \Tr_{\End} S_\vep \om_{\vep,\RG}^n \leq C_2.
\]
\end{claim}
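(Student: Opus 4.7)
The plan is to argue by contradiction following Simpson's strategy, adapted to the hermitian setting as in \cite{Li_Yau_1987}, and in the spirit of \cite[Sec.~4]{Paun_et_al_2023_HE_sing} in the K\"ahler analogue. Assume that, along some sequence $\vep_k \to 0$,
\[
    l_k := \int_\hX \log \Tr_{\End} S_{\vep_k}\, \om_{\vep_k,\RG}^n \longrightarrow + \infty.
\]
Since $s_\vep$ is trace-free and $h_E$-self-adjoint, its largest eigenvalue is comparable to $|s_\vep|_{h_E}$, giving the elementary estimate $c\,|s_\vep|_{h_E} - C \leq \log \Tr_{\End} \exp(s_\vep) \leq C\,|s_\vep|_{h_E} + C$. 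Hence $L_k := \|s_{\vep_k}\|_{L^1(\om_{\vep_k,\RG})} \to +\infty$ as well, and I renormalize by $u_k := s_{\vep_k}/L_k$, so that $\Tr_{\End}(u_k) = 0$ and $\|u_k\|_{L^1(\om_{\vep_k,\RG})} = 1$.

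The first step is to establish a uniform $L^2_1$-bound for $u_k$. Applying Proposition~\ref{prop:HE_gradlog} with $\eta_{\vep_k} = -f_{\vep_k}/r \cdot \Id_E + s_{\vep_k}$ and using the already proved bound $-C \leq f_{\vep_k} \leq -C\vph$ from \eqref{eq:HE_infty_f_bdd} together with the curvature lower bound of Corollary~\ref{cor:Mihai_et_al_thm1}, the first integral in the identity grows at most linearly in $L_k$. The second integrand is pointwise nonnegative, and since $\Psi(x,y) \geq c(M) > 0$ whenever $|x|,|y|\leq M$, dividing by $L_k^2$ and truncating at the scale $|u_k|\leq M$ produces, after standard manipulation, a bound
\[
    \int_\hX |D' u_k|_{h_E,\om_{\vep_k,\RG}}^2\, \om_{\vep_k,\RG}^n \leq C,
\]
independent of $k$. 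Combined with the uniform Sobolev and Poincar\'e inequalities of Theorem~\ref{thm:unif_Sobolev_Poincare} and Rellich's theorem, one extracts a subsequence with $u_k \to u_\infty$ weakly in $L^2_1$ and strongly in $L^2$. Preservation of the $L^1$ normalization ensures $u_\infty \not\equiv 0$; moreover $u_\infty$ is trace-free, $h_E$-self-adjoint, and has essentially bounded eigenvalues.

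Next I would apply Simpson's limiting trick with the rescaled family $\Psi_L(x,y) := L^{-1}\Psi(Lx,Ly)$, whose pointwise limit as $L \to \infty$ is $+\infty$ on $\{x>y\}$ and $0$ on $\{x<y\}$. Rewriting the identity of Proposition~\ref{prop:HE_gradlog} applied to $s_{\vep_k} = L_k u_k$ and dividing by $L_k$, the energy term becomes $L_k \int \iprod{\Psi_{L_k}(u_k) D'u_k}{D'u_k}$; passing to the weak limit forces, for every $c \in \BR$, that the spectral projector $\pi_c := \chi_{(-\infty,c]}(u_\infty)$ satisfies the weak identity $(\Id_E - \pi_c)\, D'' \pi_c = 0$. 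By the Uhlenbeck--Yau regularity theorem \cite{Uhlenbeck_Yau_1986, Popovici_2005}, each $\pi_c$ determines a coherent subsheaf of $E_{|\hX \setminus D}$, which extends across $D$ and descends (via the Siu-extension argument already used in the proof of Theorem~\ref{thm:openness_stability}) to a coherent reflexive subsheaf $\CF_c \subsetneq \CE$. Using the Gauss--Codazzi computation \eqref{eq_deg_subsheaf}, together with the Aeppli-class convergence $[\om_{\vep_k,\RG}^{n-1}]_\RA \to [\pi^\ast \om_\RG^{n-1}]_\RA$ recalled in Section~\ref{sec:slope_openness}, the limit inequality rearranges into
\[
    \sum_{i} (c_{i+1}-c_i) \bigl(\mu_{\om_\RG}(\CE)\, \rk(\CF_{c_i}) - \deg_{\om_\RG}(\CF_{c_i})\bigr) \leq 0,
\]
summed over the distinct eigenvalues $c_i$ of $u_\infty$ with positive weights $(c_{i+1}-c_i)$. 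Because $u_\infty$ is nonzero and trace-free, at least one term must be nonpositive, producing a proper subsheaf $\CF_{c_i}$ with $\mu_{\om_\RG}(\CF_{c_i}) \geq \mu_{\om_\RG}(\CE)$ and contradicting the $\mu_{\om_\RG}$-stability of $\CE$.

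The main obstacle is orchestrating the three limits consistently: the spectral limit $L \to \infty$ in $\Psi_L$, the geometric limit $\vep_k \to 0$ along the resolution, and the singular extension of $L^2_1$-projectors across both $D$ and $Z$. Precisely the three ingredients made available in this paper --- the identity of Proposition~\ref{prop:HE_gradlog}, the uniform Sobolev--Poincar\'e inequalities of Theorem~\ref{thm:unif_Sobolev_Poincare}, and the Aeppli-class convergence studied in Section~\ref{sec:slope_openness} --- are what is needed to carry the Simpson argument over from the smooth K\"ahler to the singular hermitian setting.
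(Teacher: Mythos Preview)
Your overall contradiction strategy via Simpson's argument is the same as the paper's, but the step where you extract the uniform $L^2_1$ bound on $u_k$ has a genuine gap. In the identity of Proposition~\ref{prop:HE_gradlog} the operator $\Psi$ acts on $\eta_{\vep_k}$, whose eigenvalue differences are of order $L_k$, not on $u_k$. Hence ``$\Psi(x,y)\geq c(M)$ whenever $|x|,|y|\leq M$'' is irrelevant: on the set $\{|u_k|\leq M\}$ the eigenvalue differences of $\eta_k$ can be as negative as $-2ML_k$, and there $\Psi$ decays like $e^{-2ML_k}$, far faster than the $1/L_k$ you gain from dividing the curvature term. So your inequality $\int_{\hX}|D'u_k|^2\,\om_{\vep_k,\RG}^n\leq C$ is not justified, and without it neither the weak $L^2_1$ compactness nor the ``preservation of the $L^1$ normalization'' (which needs control of mass near $D$) goes through.

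The paper closes this gap with the Harnack estimate of Lemma~\ref{lem:HE_harnack} for $\log\Tr_{\End}H_\vep$, which you do not invoke. Under the contradiction hypothesis (writing $\dt_k^{-1}$ for the diverging integral), Lemma~\ref{lem:HE_harnack} yields the \emph{pointwise} bound $\log\Tr_{\End}H_k\leq C'/\dt_k$ and hence, via $\Tr_{\End}\log S_k=0$, a two-sided pointwise control on all eigenvalues of $\eta_k$ in terms of $1/\dt_k$ and $\vph$. Rescaling $u_k:=\dt_k\eta_k$ (rather than $s_k/L_k$) then gives $|u_k|_{h_E}\leq rC'-C\dt_k\vph$, which is uniformly bounded on any compact $K\Subset\hX\setminus D$. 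On such $K$ the comparison $\frac{1}{\dt_k}\Psi(u_k/\dt_k)\geq\Phi(u_k)\geq c(K)>0$ becomes legitimate, producing a \emph{local} bound $\int_K|D'u_k|^2\leq C(K)$; the same pointwise control by $-\vph$ prevents $L^1$-mass of $u_k$ from escaping into a neighborhood of $D$, so $u_\infty\not\equiv 0$. In short, the missing idea is to feed the Harnack inequality back into the contradiction loop to obtain an $L^\infty_{\loc}$ bound on the rescaled endomorphism, before running the $\Phi$-comparison and Simpson's Lemmas~5.5--5.7.
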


Since $\log \Tr_{\End} H_\vep = \frac{-f_\vep}{r}+ \log \Tr_{\End} S_\vep$, from Lemma~\ref{lem:HE_harnack}, 
\begin{align*}
    \log \Tr_{\End} H_\vep 
    &\leq C \lt(1 + \int_\hX |\log \Tr_{\End} H_\vep| \om_{\vep,\RG}^n\rt)
    \leq C \lt(1 + \int_\hX \frac{|f_\vep|}{r} + \log \Tr_{\End} S_\vep \om_{\vep,\RG}^n\rt)\\
    &\leq C \lt(1 + \frac{C_1}{r} \int_\hX -\vep \om_{\vep,\RG} + C_2\rt)
    \leq C_3,
\end{align*}
for some $C_3 > 0$ independent of $\vep$. 
This provides a uniform upper bound of $\Tr_{\End} H_\vep$. 

\smallskip
By Proposition~\ref{prop:HE_gradlog}, we have 
\begin{align*}
    \int_\hX \iprod{\Psi(\eta_\vep) D' \eta_\vep}{D' \eta_\vep}_{h_E, \om_{\vep,\RG}} \om_{\vep,\RG}^n
    &= - \int_\hX \Tr_{\End} (\tr_{\vep,\RG} \ii\Ta(E,h_E) \circ \eta_\vep) \om_{\vep,\RG}^n \\
    &\leq rC_1 \int_\hX (-\vph) |\tr_{\vep,\RG} \Ta(E,h_E)|_{h_E} \om_{\vep,\RG}^n.
\end{align*}
From Theorem~\ref{thm:existence_bdd_Gauduchon}, $|\tr_{\vep,\RG} \Ta(E,h_E)| \om_{\vep,\RG}^n \leq n |\Ta(E,h_E) \w \om_{\vep,\RG}^{n-1}| \leq C_E \homg^n$ for some $C_E > 0$ independent of $\vep$.
Since $\eta_\vep = \frac{-f_\vep}{r} \otimes \Id_E + \log S_\vep$ and $\Tr_{\End} \log S_\vep = 0$, one can deduce $\log S_\vep \leq (C + \frac{f_\vep}{r}) \otimes \Id_E \leq C(1-\frac{\vph}{r}) \otimes \Id_E$ and thus $\log S_\vep \geq -C(1-\vph) \otimes \Id_E$. 
This implies that 
\[
    C\vph \otimes \Id_E \leq \eta_\vep \leq C \Id_E
\]
and thus, $\ld_{\max} - \ld_{\min} \leq C(1-\vph) \leq -2C\vph$. 
Then 
\[
    \iprod{\Psi(\eta_\vep) D' \eta_\vep}{D' \eta_\vep}
    \geq \frac{1 - e^{2C \vph}}{-2C \vph} |D' \eta_\vep|^2
    \geq \frac{1-e^{-2C}}{-2C\vph} |D' \eta_\vep|^2.
\]
Hence, we obtain the following uniform estimate
\[
    \int_\hX \frac{|D' \eta_\vep|^2_{h_E, \om_{\vep,\RG}}}{-\vph} \om_{\vep,\RG}^n \leq C.
\]
This completes the proof.
\end{proof}

With Theorem~\ref{thm:HE_Linfty_est}, we are now able to conclude Theorem~\ref{bigthm:HE_metric}. 

\begin{proof}[Proof of Theorem~\ref{bigthm:HE_metric}]
By the local regularity theory (cf. \cite[Appx.~C-E]{Jacob_Walpuski_2018} for details) for Hermite--Einstein equation, after extracting a subsequence, $H_\vep$ converges locally smoothly on $\hX \setminus D$ to an $H$ hermitian endomorphism solving the Hermite--Einstein equation there. 
Remark that as the proof presented in \cite[Appx.~C-E]{Jacob_Walpuski_2018} is purely local, the non-K\"ahler condition does not affect the situation here.  
The estimates in Theorem~\ref{bigthm:HE_metric} also follow directly. 

\smallskip
It remains to verify the uniqueness. 
Suppose that $h_1$ and $h_2$ are two singular Hermite--Einstein metrics both satisfying the normalization and the estimates in Theorem~\ref{bigthm:HE_metric}. 
Write $h_i = h_0 H_i$ and $H_i = e^{-f_i/r} S_i$ for $i = 1, 2$. 
From the estimates obtained in Theorem~\ref{thm:HE_Linfty_est}, there is a constant $C > 0$ such that 
\begin{equation}\label{eq:uniqueness_Linfty_est}
    -C \leq f_i \leq -C \vph, 
    \quad\text{and}\quad
    C\vph \otimes \Id_\CE \leq \log S_i \leq -C\vph \otimes \Id_\CE
    \quad\text{on } X^\circ.
\end{equation}
The lower bound of $S_i$ comes from its control from above and $\Tr_{\End} \log S_i = 0$.

\smallskip
Consider $\mathfrak{H} := h_1^\ast \otimes h_2$ a hermitian metric on $\End(\CE_{|X^\circ})$. 
Then by a direct computation, we have
\begin{equation}\label{eq:uniqueness_Lap_ID}
    \Dt_{\om_\RG} |\Id_\CE|_{\mathfrak{H}}^2 
    = 2 |D'_{\mathfrak{H}} \Id_\CE|_{\mathfrak{H}}^2,
    \quad\text{and}\quad
    \Dt_{\om_\RG} \log |\Id_\CE|_{\mathfrak{H}}^2
    = \frac{2 |D'_{\mathfrak{H}} \Id_\CE|_{\mathfrak{H}}^2}{|\Id_\CE|_{\mathfrak{H}}^2}
    - \frac{|\pl |\Id_\CE|_{\mathfrak{H}}^2|_{\om_\RG}^2}{|\Id_\CE|_{\mathfrak{H}}^4}
    \geq 0
\end{equation}
on $X^\circ$. 
Set $A := \log_+ |\Id_\CE|_{\mathfrak{H}}^2$.
Then we also have $\Dt_{\om_\RG} A \geq 0$.
Since $Z = X \setminus X^\circ$ has complex codimension at least $2$ in $X$, there exists a sequence of cutoff functions $(\chi_k)_k$ supported on $X^\circ$ such that $\chi_k$ increases to the characteristic function of $X^\circ$. 
Moreover, these functions satisfy $\int_X |\dd \chi_k|_{\om}^4 + |\ddc \chi_k|_\om^2 \om^n < C$ for some constant $C > 0$ independent of $k$ and $\int_X |\dd \chi_k|_{\om}^3 + |\ddc \chi_k|_\om^{3/2} \om^n \to 0$ as $k \to +\infty$. 
One can obtain these cutoffs by the standard argument in \cite[Lem.~1]{Schoen_Simon_1981} or \cite[Sec.~4.7]{EG_1992}.  
For the reader's convenience, we also provide the construction of these cutoff functions in Lemma~\ref{lem:cutoffs}.

\smallskip
Now, we want to find a uniform bound for $(\int_X |\dd (\chi_k A)|_{\om_\RG}^2 \om_\RG^n)_k$. 
By Stokes' theorem and the Gauduchon condition of $\om_\RG$, we have
\begin{equation}\label{eq:uniqueness_IPP}
{\footnotesize
\begin{split}
    \int_X |\dd (\chi_k A)|_{\om_\RG}^2 \om_\RG^n 
    &= n \int_X A^2 \dd \chi_k \w \dc \chi_k \w \om_\RG^{n-1} + n \int_X \chi_k^2 \dd A \w \dc A \w \om_\RG^{n-1}
    + 2n \underbrace{\int_X \chi_k A \dd\chi_k \w \dc A \w \om_\RG^{n-1}}_{= \frac{1}{2} \int_X A \dd \chi_k^2 \w \dc A \w \om_\RG^{n-1}}\\
    &= n \int_X A^2 \dd \chi_k \w \dc \chi_k \w \om_\RG^{n-1} + n \int_X \chi_k^2 \dd A \w \dc A \w \om_\RG^{n-1} \\
    &\qquad- n \int_X \chi_k^2 \dd A \w \dc A \w \om_\RG^{n-1} 
    - n \underbrace{\int_X \chi_k^2 A \ddc A \w \om_\RG^{n-1}}_{\geq 0} 
    - n \underbrace{\int_X \chi_k^2 A \dd A \w \dc \om_\RG^{n-1}}_{= \frac{1}{2} \int_X \chi_k^2 \dd A^2 \w \dc \om_\RG^{n-1}}\\
    &\leq \int_X A^2 |\dd \chi_k|_{\om_\RG}^2 \om_\RG^n 
    + n \int_X A^2 \chi_k \dd \chi_k \w \dc \om_\RG^{n-1}.
\end{split}
}%
\end{equation}
Recall that $\om_\RG^{n-1} = \rho \om^{n-1}$ where $\rho \in \CC^\infty(X^\reg) \cap L^\infty(X^\reg)$ is a bounded Gauduchon factor. 
Since $\rho_0$ can be obtained as a limit of $(\rho_\vep)_{\vep \in (0,1]}$ as $\vep \to 0$, we have
\[
    \int_X |\dd\rho|_\om^2 \om^n 
    = \int_\hX |\dd\rho|_{\pi^\ast\om}^2 \pi^\ast \om^n
    \leq \liminf_{\vep \to 0} \int_{\hX} |\dd \rho_\vep|_{\om_\vep}^2 \om_\vep^n
    \leq \liminf_{\vep \to 0} \frac{nB}{2} \int_\hX \rho_\vep^2 \om_\vep^n
\]
where $B>0$ is a uniform constant such that $-B \om_\vep \leq \ddc \om_\vep^{n-1} \leq B \om_\vep^n$ (see also Step 1 in the proof of Theorem~\ref{thm:existence_bdd_Gauduchon}). 
The last inequality comes from \cite[Lem.~2.4]{Pan_2022}.
Hence, $\rho \in L^2_1(X,\om)$.
By Cauchy--Schwarz inequality, 
\begin{equation}\label{eq:uniqueness_CS}
\begin{split}
    n \dd \chi_k \w \dc \om_\RG^{n-1} 
    &= n \dd \chi_k \w \dc \rho \w \om^{n-1}
    + n (n-1) \rho \dd \chi_k \w \dc \om \w \om^{n-2} \\ 
    &\leq |\dd \chi_k|_\om |\dd \rho|_\om \om^n + c_n C_G |\dd \chi_k|_\om |\dd \om|_\om \om^n
\end{split}
\end{equation}
for some dimensional constants $c_n$. 
Remark that the torsion term $|\dd\om|_\om$ is bounded. 
Indeed, since $\om$ extends smoothly to a metric $\om'$ under a local embedding $j: U \hookrightarrow \BC^N$ where $U$ is an open set on $X$, we have 
\[
    |\dd\om|_\om^2(x)
    = |\dd \wom_{|j(U^\reg)}|_{\wom_{|j(U^\reg)}}^2 (j(x))
    \leq |\dd\wom|_{\wom}^2 (j(x))
\] 
for any $x \in U^\reg$. 
The torsion $|\dd\wom|_{\wom}^2$ is under control near a neighborhood of $j(U)$. 
Set $C_\om > 0$ an upper bound of $|\dd\om|_\om$.
Then combining \eqref{eq:uniqueness_IPP} and \eqref{eq:uniqueness_CS}, by H\"older inequality, one can obtain 
{\footnotesize
\begin{align*}
    \int_X |\dd (\chi_k A)|_{\om_\RG}^2 \om_\RG^n 
    &\leq \lt(\int_X A^6 \om_\RG^n\rt)^{1/3} \lt(\int_X |\dd \chi_k|_{\om_\RG}^3 \om_\RG^n\rt)^{2/3} + \lt(\int_X A^{12} \om^n\rt)^{1/6} \lt(\int_X |\dd \chi_k|_{\om}^3 \om^n\rt)^{1/3} \lt(\int_X |\dd\rho|^2_\om \om^n\rt)^{1/2} \\
    &\quad + c_n C_G C_\om \lt(\int_X |\dd \chi_k|_\om^2 \om^n\rt)^{1/2} \lt(\int_X A^4 \om^n\rt)^{1/2}. 
\end{align*}
}%
Note that \eqref{eq:uniqueness_Linfty_est} shows that $A \in L^p(X)$ for all $p > 1$. 
Since $\om_\RG$ is quasi-isometric to $\om$, and $(\int_X |\dd \chi_k|_\om^3 \om^n)_k$ converges to zero, the RHS in the above inequality goes to zero. 
This shows that $A$ is a constant and thus $|\Id_\CE|_{\mathfrak{H}}^2 \in L^\infty(X^\circ)$.

\smallskip
Write $h_2(s, t) = h_1(\Phi(s), t)$ where $\Phi = H_1^{-1} \circ H_2$. 
Using cutoff functions $(\chi_k)_k$, \eqref{eq:uniqueness_Lap_ID}, boundedness of $|\Id_\CE|_{\mathfrak{H}}^2$ and Stokes theorem, one can infer $D'_{\mathfrak{H}} \Id_\CE = 0$ on $X^\circ$.  
This implies that $\Phi$ is a holomorphic section of $\End(\CE_{|X^\circ})$.  
Since $\CE$ is reflexive, $\End(\CE)$ is also a reflexive sheaf. 
As $Z = X \setminus X^\circ$ has codimension at least $2$, $\Phi$ extends to a global section of $\End(\CE)$, which we still denote by $\Phi$. 
By standard argument (see e.g. \cite[Ch.~V, Cor.~7.14]{Kobayashi_book}), $\Phi$ is a constant multiple of the identity map. 
\end{proof}

\begin{lem}\label{lem:cutoffs}
Let $X$ be an $n$-dimensional compact complex space, and let $S \subset X$ be a complex analytic subset which has codimension at least $k$.
Fix $\om$ a hermitian metric on $X$. 
Then there exists cutoff functions $(\chi_i)_{i \in \BN^\ast}$ increasing to $\1_{X \setminus S}$ and a constant $C>0$ such that $\int_X |\dd \chi_i|_\om^{2k} + |\ddc \chi_i|_\om^{k} \om^n \leq C$ for all $i \in \BN^\ast$ and $\lim_{i \to +\infty} \int_X |\dd \chi_i|_\om^{2k-\vep} + |\ddc \chi_i|_\om^{k-\vep} \om^n = 0$ for any $\vep > 0$ small. 
\end{lem}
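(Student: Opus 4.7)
The plan is to build each $\chi_i$ as a fixed smooth one-variable cutoff composed with a rescaled logarithm of a global ``defining function'' $\tau \geq 0$ for $S$. Using compactness of $X$, I would cover $S$ by finitely many $U_\af$ on each of which $\CI_S$ is generated by holomorphic functions $g_1^\af, \ldots, g_{m_\af}^\af$, set $\tau_\af := \sum_j |g_j^\af|^2$, and glue via $\tau := \sum_\af \psi_\af^2 \tau_\af$ with smooth cutoffs $\psi_\af$ supported in $U_\af$ whose interior level sets $\{\psi_\af = 1\}$ still cover $S$. Then $\{\tau = 0\} = S$, and a Cauchy--Schwarz estimate of the form $\sum_\af \psi_\af \tau_\af \leq \bigl(\sum_\af \psi_\af^2 \tau_\af\bigr)^{1/2} \bigl(\sum_\af \tau_\af\bigr)^{1/2}$, combined with the holomorphic local bounds $|\dd \tau_\af|^2 \leq C \tau_\af$ and $|\ddc \tau_\af|_\om \leq C$ (immediate since $\tau_\af$ is a sum of squared moduli of holomorphic functions), yields the global pointwise estimates $|\dd \tau|_\om \leq C \sqrt{\tau}$ and $|\ddc \tau|_\om \leq C$ on a neighborhood of $S$.

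Next, fix a smooth non-decreasing $\eta : \BR \to [0,1]$ equal to $0$ on $(-\infty, 0]$ and to $1$ on $[1, +\infty)$, and set
\[
\chi_i := \eta\!\lt(\frac{\log \tau}{\log i} + 2\rt),
\]
extended by $0$ on $S$. Then $\chi_i$ vanishes on $\{\tau \leq i^{-2}\}$, equals $1$ on $\{\tau \geq i^{-1}\}$, is non-decreasing in $i$, and increases pointwise to $\1_{X \setminus S}$. The chain rule, together with the bounds on $\tau$, yields on the transition region $T_i := \{i^{-2} \leq \tau \leq i^{-1}\}$ (and zero elsewhere)
\[
|\dd \chi_i|_\om \leq \frac{C}{\sqrt{\tau}\,\log i}, \qquad |\ddc \chi_i|_\om \leq \frac{C}{\tau\,\log i}.
\]

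The decisive geometric input is the tube volume estimate $\Vol_\om(\{\tau \leq r^2\}) \leq C r^{2k}$ for small $r$, which follows from the codimension assumption: locally near any $x_0 \in S$ one can extract $k$ of the $g_j^\af$ whose common zero set has codimension exactly $k$, and the standard Bishop--Lelong bound (applied under a local embedding $X \hookrightarrow \BC^N$) controls the $2n$-dimensional volume of a tube of radius $r$ around such a codimension-$k$ analytic subset by $O(r^{2k})$. Granted this, decompose $T_i$ dyadically into annuli $A_{i,j} := \{2^{-j-1}/i \leq \tau \leq 2^{-j}/i\}$ for $j = 0, \ldots, \lfloor \log_2 i \rfloor$. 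The volume bound together with $\tau^{-k} \leq C(2^{-j}/i)^{-k}$ on $A_{i,j}$ makes each $\int_{A_{i,j}} \tau^{-k} \om^n = O(1)$, so summing over the $O(\log i)$ annuli gives $\int_{T_i} \tau^{-k} \om^n \leq C \log i$, which combined with the pointwise bounds delivers
\[
\int_X |\dd \chi_i|_\om^{2k} \om^n \leq \frac{C}{(\log i)^{2k-1}}, \qquad \int_X |\ddc \chi_i|_\om^{k} \om^n \leq \frac{C}{(\log i)^{k-1}},
\]
both uniformly bounded. For the $o(1)$ statement, the identical dyadic computation with exponent $k - \vep/2$ in place of $k$ gains an extra factor $2^{-j\vep/2}$ per annulus, so the sum over $j$ becomes $O(i^{-\vep/2})$; dividing by $(\log i)^{2k-\vep}$ and $(\log i)^{k-\vep}$ shows the $(2k-\vep)$- and $(k-\vep)$-integrals tend to $0$.

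I expect the main obstacle to be the careful verification of the tube volume estimate in the possibly singular setting for $X$, together with the preservation of the key pointwise bound $|\dd \tau|_\om \leq C \sqrt{\tau}$ under the partition-of-unity gluing. Both are resolved by reducing to the ambient $\BC^N$ through a local embedding, by exploiting that near any $x_0 \in S$ one has $\tau \geq c\, \tau_{\af_0}$ for some $\af_0$ with $\psi_{\af_0}(x_0) = 1$ (so tubes of $\tau$ sit inside tubes of a single standard $\tau_{\af_0}$), and by the Cauchy--Schwarz trick described above.
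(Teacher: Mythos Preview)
Your construction via a smooth defining function $\tau$ and logarithmic cutoffs is appealing, but the ``decisive geometric input'' $\Vol_\om(\{\tau \leq r^2\}) \leq C r^{2k}$ is not valid for your $\tau$ in general, and this breaks the argument. The Bishop--Lelong bound controls the volume of the \emph{distance tube} $\{\dist(\cdot, S) < r\}$, whereas the sublevel set $\{\tau \leq r^2\}$ can be strictly larger near singular points of $S$, because a sum of squares of ideal generators may vanish to order higher than $\dist(\cdot, S)^2$ there. Concretely, take $X$ a neighbourhood of $0$ in $\BC^2$, $S = \{z_1^2 = z_2^3\}$ (so $k=1$), $g = z_1^2 - z_2^3$ the single generator of the radical ideal $\CI_S$, and $\tau = |g|^2$. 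Integrating first in $z_1$ for fixed $z_2$ and splitting into $|z_2|^3 \leq r$ and $|z_2|^3 > r$, one finds $\Vol(\{|g| \leq r\}) \sim r^{5/3}$, not $r^2$. Feeding this into your dyadic sum gives $\int_{T_i} \tau^{-1}\, \om^n \sim i^{1/3}$, hence $\int_X |\dd \chi_i|_\om^2\, \om^n \geq c\, i^{1/3}/(\log i)^2 \to \infty$, so these cutoffs do not satisfy the conclusion of the lemma. The {\L}ojasiewicz inequality only yields $\tau \geq c\, \dist(\cdot, S)^{2N}$ for some $N \geq 1$, which is the wrong direction; you would need $\tau \geq c\, \dist(\cdot, S)^2$, and that fails precisely at the singularities of $S$ (even when $X$ itself is smooth).

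The paper bypasses this by following Schoen--Simon: for each $i$ it covers $S$ (after a local embedding into $\BC^N$) by finitely many balls $\BB^N(z_\af, r_\af)$ with $\sum_\af r_\af^{2(n-k)}$ controlled by the $(2n-2k)$-Hausdorff measure of $S$, and takes $\chi_i$ to be the product of radial cutoffs vanishing on these balls. The integrals are then bounded by $\sum_\af r_\af^{-2k} \cdot \Vol_\om(\BB^N(z_\af, 2r_\af) \cap X) \leq C \sum_\af r_\af^{2(n-k)}$, using only that $S$ has locally finite $(2n-2k)$-Hausdorff measure and never comparing a smooth $\tau$ to the distance. If you wish to salvage your approach, you would need either to work with a $C^2$ regularization of $\dist(\cdot, S)^2$ and control its $\ddc$, or to first pass to a log resolution where the preimage of $S$ is a simple normal crossings divisor, since there your $\tau$ does compare to the squared distance.
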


\begin{proof}
We follow the construction in \cite[pages 751-752]{Schoen_Simon_1981} and \cite[pages 89-90]{Wickramasekera_2008}.
It suffices to verify the existence of cutoffs locally. 
Let $U$ be an open neighborhood of $x \in X$, and let $j: (U,x) \hookrightarrow (\BC^N,0)$ be a local embedding. 
Denote by $\BB^N(z,r)$ ball of radius $r$ in $\BC^N$ centered at $z$ and $\om$ the Euclidean metric on $\BC^N$.
Since $S' := j(S \cap U) \cap \BB^N(0,2)$ is compact, for each $i \in \BN^\ast$, there exist a finite number $m^{(i)}$ and balls $\BB^N(z_\af^{(i)}, r_\af^{(i)})$ for $\af \in \{1, 2, \cdots, m^{(i)}\}$ with $z_\af^{(i)} \in S'$ such that $S' \subset \bigcup_{\af=1}^{m^{(i)}} \BB^N(z_\af^{(i)}, r_\af^{(i)}) =: \CS^{(i)}$, 
\begin{equation}\label{eq:Hausdorff_sing}
    \sum_{\af=1}^{m^{(i)}} v_{2(n-k)} (2r_\af^{(i)})^{2(n-k)}
    \leq K 
    := 1 + \CH^{2(n-k)}(S')
\end{equation}
and $r_\af^{(i)} \leq \tau^{(i)}$, where $v_d$ is the volume of the unit ball in $\BR^d$ and $\CH^d$ is the $d$-dimensional Hausdorff measure.
Here $\tau^{(1)} := \tau \in (0, \frac{1}{8})$ is a sufficiently small fixed number so that \eqref{eq:Hausdorff_sing} holds, and $\tau^{(i)} = \frac{1}{4} \dist(S', \BC^N \setminus \CS^{(i-1)})$. 

\smallskip
For each $i \in \BN^\ast$ and each $\af \in \{1, \cdots, m^{(i)}\}$, let $\psi_\af^{(i)}$ be a $\CC^1$-function defined near $j(U)$ such that 
{\small
\begin{align*}
    \psi_\af^{(i)}
    =
    \begin{cases}
        0 & \text{on } j(U) \cap \BB^N(z_j^{(i)}, r_j^{(i)}) \\
        1 & \text{on } j(U) \setminus \BB^N(z_j^{(i)}, 2r_j^{(i)})
    \end{cases}, 
    \quad
    0 \leq \psi_\af^{(i)} \leq 1, \quad 
    |\dd \psi_\af^{(i)}|_{\om} \leq \frac{2}{r_\af^{(i)}}, \quad\text{and}\quad
    |\ddc \psi_\af^{(i)}|_{\om} \leq \frac{2}{(r_\af^{(i)})^2} \text{ on $j(U)$}.
\end{align*}
}%
Consider $\chi_i = \prod_{\af=1}^{m^{(i)}} \psi_\af^{(i)}$ and set $V^{(i)} = \bigcup_{\af=1}^{m^{(i)}} \BB^N(z_\af^{(i)}, 2r_\af^{(i)})$. 
By the construction, we have $\chi_i \equiv 0$ on $\CS^{(i)}$, $\chi_i \equiv 1$ on $j(U) \setminus V^{(i)}$, the support of $|\dd \chi_i|_\om$ and $|\ddc \chi_i|_\om$ on $j(U)$ is contained in $j(U) \cap (V^{(i)} \setminus V^{(i+1)})$. 
From \eqref{eq:Hausdorff_sing}, one can conclude that
{\small
\[
    \int_{j(U)} |\dd \chi_i|_\om^{2k} \om^n 
    \leq \int_{j(U) \cap V^{(i)}} |\dd \chi_i|_\om^{2k} \om^n
    \leq \sum_{\af = 1}^{m^{(i)}} \lt(\frac{2}{r_\af^{(i)}}\rt)^{2k} \CH^{2n}(j(U) \cap \BB^N(z_\af^{(i)}, 2r_\af^{(i)})) 
    \leq \sum_{\af=1}^{m^{(i)}} 4^{k+n} (r_\af^{(i)})^{2(n-k)}
    \leq c K
\] 
}%
where $c = c(n,k)$ is a constant.
Similarly, 
\[
    \int_{j(U)} |\ddc \chi_i|_\om^{k} \om^n
    \leq c' K
\] for a constant $c' > 0$. 
By H\"older inequality and $\Vol_\om(V^{(i)}) \to 0$ as $i \to +\infty$, one can deduce that $\lim_{i \to +\infty} \int_X |\dd \chi_i|_\om^{2k-\vep} + |\ddc \chi_i|_\om^{k-\vep} \om^n = 0$ for all $\vep > 0$ small. 
\end{proof}

\appendix

\section{Proof of Claim~\ref{claim:int_bdd_logTrS}}\label{sec:pf_claim}

As in \cite[Sec.~4.2.2]{Paun_et_al_2023_HE_sing}, we proceed with a proof by contradiction using Simpson's approach \cite[p.~885-889]{Simpson_1988}. 
Suppose that there exists $\vep_k \to 0$ and $\dt_k \to 0$ such that
\begin{equation}\label{eq:claim_hypo}
    \int_\hX \dt_k \log \Tr_{\End} S_{\vep_k} \om_{\vep_k, \RG}^n = 1.
\end{equation}
For simplicity, we shall denote by $S_k := S_{\vep_k}$, $H_k := H_{\vep_k}$, and $\om_{k,\RG} := \om_{\vep_k,\RG}$.

\smallskip
Recall that from \eqref{eq:HE_infty_f_bdd}, we have 
\[
    -C_1 \leq f_k \leq -C_1 \vph.
\]
By Lemma~\ref{lem:HE_harnack}, 
\begin{equation}\label{eq:HE_Linfty_case2_f_H_bd}
    \log \Tr_{\End} (H_k) \leq C \lt(1 + \int_\hX \lt(\frac{|f_k|}{r} + \log \Tr_{\End} S_k\rt) \om_{k,\RG}^n\rt)
    \leq \frac{C'}{\dt_k} 
\end{equation}
for some uniform constant $C' > 0$, and thus 
\[
    \dt_k \log \Tr_{\End} S_k \leq C' + \frac{\dt_k f_k}{r} 
    \leq C' + \frac{- \dt_k C_1 \vph}{r}.
\]
This implies that all eigenvalues of $\log S_k$ are less than $\frac{C'}{\dt_k} + \frac{-C_1 \vph}{r}$. 
Since $\log S_k$ has zero trace, the eigenvalues are bounded below by $-\frac{rC'}{\dt_k} + C_1\vph$.
Set 
\[
    u_k := \dt_k \eta_k = -\dt_k \frac{f_k}{r} \otimes \Id_E + \dt_k \log S_k.
\]
Then we have 
\begin{equation}\label{eq:HE_Linfty_case2_u_bd}
    |u_k|_{h_E} \leq \frac{-\dt_k C_1 \vph}{r} + rC' - \dt_k C_1 \vph.
\end{equation}

\smallskip
Following \cite[Lem.~4.6]{Paun_et_al_2023_HE_sing}, one can derive: 

\begin{lem}\label{lem:simpson_1}
There exists a subsequence of $(u_k)_k$ converging weakly in $L^2_1$ towards a limit $u_\infty$ on any compact subset $K \Subset \hX \setminus D$ such that the following hold: 
\begin{enumerate}
    \item The endomorphism $u_\infty$ is non-trivial and it belongs to $L^2_{1}(\hX, \pi^\ast \om_\RG)$;
    \item Let $\Phi: \BR \times \BR \to \BR_{>0}$ be a smooth function such that $\Phi(a,b) \leq \frac{1}{a-b}$ for any $a > b$. 
    Then 
    \[
        0 \geq \int_\hX \iprod{\Phi(u_\infty) D' u_\infty}{D' u_\infty}_{h_E,\pi^\ast\om_\RG} \pi^\ast \om_\RG^n 
        + \int_\hX \Tr_{\End}(\tr_{\pi^\ast \om_\RG} \ii\Ta(E,h_E) \circ u_\infty) \pi^\ast \om_\RG^n.
    \]
\end{enumerate}
\end{lem}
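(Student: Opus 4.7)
The plan is to adapt Simpson's approach. Apply Proposition~\ref{prop:HE_gradlog} to $\eta_k = u_k/\delta_k$ and multiply by $\delta_k$; using the identity
\[
\Psi\!\lt(\tfrac{a}{\delta_k}, \tfrac{b}{\delta_k}\rt) = \delta_k\, \Phi_k(a,b), \qquad \Phi_k(a,b) := \frac{e^{(a-b)/\delta_k}-1}{a-b},
\]
together with $D' u_k = \delta_k D' \eta_k$, the identity rewrites as
\begin{equation}\label{eq:plan_key}
\int_{\hX} \langle \Phi_k(u_k) D' u_k, D' u_k \rangle_{h_E, \om_{k,\RG}} \om_{k,\RG}^n = - \int_{\hX} \Tr_{\End}\lt(\tr_{\om_{k,\RG}} \ii\Ta(E,h_E) \circ u_k\rt) \om_{k,\RG}^n.
\end{equation}
The pointwise bound \eqref{eq:HE_Linfty_case2_u_bd} combined with Theorem~\ref{bigthm:approx_and_conv} and the quasi-plurisubharmonicity of $\vph$ yields uniform $L^p(\hX, \om_{k,\RG}^n)$-bounds (for every $p<\infty$) and uniform $L^\infty$-bounds on every compact $K \Subset \hX \setminus D$, so that the right-hand side of \eqref{eq:plan_key} is uniformly bounded.

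Next I would extract a weak limit. A direct inspection of $\Phi_k$ shows $\Phi_k(a,b) \to +\infty$ for $a \geq b$ and $\Phi_k(a,b) \nearrow (b-a)^{-1}$ for $a<b$. In particular, for the concrete smooth positive test $\Phi'(a,b) := e^{-(a-b)^2}$ (which satisfies $\Phi'(a,b) \leq 1/|a-b|$ for $a \neq b$ since $x e^{-x^2} < 1$ for all $x > 0$), one has $\Phi'(u_k) \leq \Phi_k(u_k)$ on every compact $K \Subset \hX \setminus D$ for $k$ large, using the uniform $L^\infty$-bound on $u_k$. Combined with the lower bound $\Phi'(u_k) \geq e^{-M_K^2}$, this produces a uniform bound of $\int_K |D' u_k|_{h_E, \pi^\ast\om_\RG}^2 \pi^\ast\om_\RG^n$. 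Weak $L^2_1$-compactness, Rellich and a diagonal extraction over an exhaustion $K_j \uparrow \hX\setminus D$ produce $u_\infty \in L^2_{1,\loc}(\hX \setminus D) \cap L^\infty(\hX)$ with $u_k \to u_\infty$ weakly in $L^2_{1,\loc}$ and strongly in $L^2_\loc$.

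Non-triviality of $u_\infty$ follows from \eqref{eq:claim_hypo}: the pointwise inequality $\log \Tr_{\End} S_k \leq \log r + \max_\af (\log S_k)_\af$ together with $\int \delta_k f_k \om_{k,\RG}^n = 0$ forces $\int_{\hX} \max_\af \lambda_\af^k \om_{k,\RG}^n \geq 1 - o_k(1)$, where $\lambda_\af^k$ are the eigenvalues of $u_k$; strong $L^1_\loc$-convergence, together with the smooth convergence $\om_{k,\RG} \to \pi^\ast \om_\RG$ on $\hX \setminus D$ and the uniform integrability provided by $|u_k| \leq C(1-\delta_k\vph)$, transfers this unit of mass to the maximum eigenvalue of $u_\infty$. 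For the inequality (ii), let $\Phi$ be as in the statement. The three-case analysis gives $\Phi \leq \Phi_k$ on $\{a \geq b\}$ for $k$ large, while on $\{a < b\}$ one argues through Simpson's monotone-limit trick by replacing $\Phi$ by $\min(\Phi, \Phi_k)$ and invoking monotone/dominated convergence. Strong $L^2_\loc$-convergence of $u_k$ yields a.e.\ convergence of eigenvalues, hence of $\Phi(u_k) \to \Phi(u_\infty)$; together with weak $L^2_\loc$-convergence of $D' u_k$ and the smooth convergence $\om_{k,\RG} \to \pi^\ast\om_\RG$, weak lower semicontinuity of the quadratic form with strongly convergent positive weights produces
\[
\int_{\hX} \langle \Phi(u_\infty) D' u_\infty, D' u_\infty \rangle_{h_E, \pi^\ast\om_\RG} \pi^\ast\om_\RG^n
\leq \liminf_k \int_{\hX} \langle \Phi_k(u_k) D' u_k, D' u_k \rangle_{h_E, \om_{k,\RG}} \om_{k,\RG}^n.
\]
The curvature term in \eqref{eq:plan_key} passes to the limit by dominated convergence, using $|u_k| \leq C(1-\delta_k\vph)$ and the lower bound $\ii\Ta(E,h_E) \geq -C\,\om_{k,\RG} \otimes \Id_E$ from Corollary~\ref{cor:Mihai_et_al_thm1}. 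Applying the resulting inequality to $\Phi'$, which is bounded below thanks to $u_\infty \in L^\infty$, then gives the global $L^2_1$-regularity of $u_\infty$.

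The main obstacle is the Simpson-type comparison between $\Phi$ and $\Phi_k$, to be performed uniformly on compact eigenvalue configurations: subtlety arises both on the diagonal $a = b$ (where $\Phi_k(a,a) = 1/\delta_k$ blows up while $\Phi$ is merely smooth positive) and in the regime $a < b$ (where the stated hypothesis does not constrain $\Phi$), so that passing to the limit requires the monotone-truncation trick from \cite[pp.~885-889]{Simpson_1988}. The non-K\"ahler/varying-metric regime—in which $\om_{k,\RG}$ is only Gauduchon and depends on $k$—is handled by the quasi-isometry and Gauduchon-factor control from Theorem~\ref{bigthm:approx_and_conv}.
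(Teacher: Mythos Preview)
Your proposal is correct and follows essentially the same approach as the paper: rescale Proposition~\ref{prop:HE_gradlog}, compare $\Phi$ to $\Phi_k$ on compacts to extract a uniform $L^2_1$-bound there, obtain a weak $L^2_{1,\loc}$ limit via Rellich and a diagonal argument, establish non-triviality from the mass normalization \eqref{eq:claim_hypo} together with the tail control \eqref{eq:HE_Linfty_case2_u_bd}, and pass to the limit in the quadratic form (the paper invokes \cite[Prop.~4.1(b)]{Simpson_1988} for this last step rather than your hands-on a.e.-eigenvalue-convergence argument, but the content is the same). Your observation that $u_\infty \in L^\infty$ (since $\dt_k \to 0$ in \eqref{eq:HE_Linfty_case2_u_bd}) and the resulting global $L^2_1$-bound via (ii) applied to $\Phi'$, together with your explicit identification of the $a<b$ subtlety in the $\Phi \leq \Phi_k$ comparison, actually add detail the paper leaves implicit.
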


\begin{proof}
By Proposition~\ref{prop:HE_gradlog}, we have
\[
    \frac{1}{\dt_k} \int_\hX \iprod{\Psi(u_k/\dt_k) D'u_k}{D'u_k}_{h_E,\om_{k,\RG}} \om_{k,\RG}^n 
    + \int_\hX \Tr_{\End}(\tr_{k,\RG} \ii\Ta(E,h_E) \circ u_k) \om_{k,\RG}^n 
    = 0.
\]
Note that
\[
    \frac{1}{\dt_k} \Psi(u_k/\dt_k) D'u_k  
    = \sum_{\af,\bt} \frac{\exp\lt(\frac{\ld_\af-\ld_\bt}{\dt_k}\rt) - 1}{\ld_\af-\ld_\bt} A^\af_\bt e_\af \otimes e^\bt
\]
Fix $K$, a compact subset of $\hX \setminus D$. 
For sufficiently large $k$, we have 
\[
    \frac{1}{\dt_k} \int_\hX \iprod{\Psi(u_k/\dt_k) D'u_k}{D'u_k}_{h_E,\om_{k,\RG}} \om_{k,\RG}^n
    \geq \int_K \iprod{\Phi(u_k) D'u_k}{D'u_k}_{h_E,\om_{k,\RG}} \om_{k,\RG}^n
\]
and thus, 
\[
    \int_K \iprod{\Phi(u_k) D'u_k}{D'u_k}_{h_E,\om_{k,\RG}} \om_{k,\RG}^n 
    + \int_\hX \Tr_{\End}(\tr_{k,\RG} \Ta(E,h_E) \circ u_k) \om_{k,\RG}^n 
    \leq 0
\]
for all $k$ large enough.
Note that on $K$, ${u_k}_{|K}$ has uniformly bounded eigenvalues. 
Therefore, we obtain a uniform gradient estimate as follows: 
\[
    \int_K |D'u_k|_{h_E,\om_{k,\RG}}^2 \om_{k,\RG}^n \leq C
\]
for all $k \gg 1$.
Since $(\om_{k,\RG})_k$ are uniformly quasi-isometric on $K$, we can extract a subsequence of $(u_k)_k$ converging weakly in $L^2_1(K)$ and strongly in $L^2(K)$ towards $u_\infty \in L^2_1(K, \pi^\ast\om_\RG)$. 
Enlarging $K$ and using a diagonal argument, we obtain a subsequence such that $u_k$ converges weakly in $L^2_{1,\loc}(\hX \setminus D, \pi^\ast \om_\RG)$ and strongly in $L^2_\loc(\hX \setminus D, \pi^\ast \om_\RG)$.

\smallskip
We are now going to show that $u_\infty$ is not identically zero.
Note that
\begin{equation}\label{eq:HE_Linfty_case2_S_ub}
\begin{split}
    \dt_k \log \Tr_{\End} S_k
    &\leq \dt_k \log r + \dt_k |s_k|_{h_E} 
    \leq \dt_k \log r + |u_k|_{h_E} + \dt_k |f_k| 
\end{split}
\end{equation}
Thus, from \eqref{eq:claim_hypo} and \eqref{eq:HE_Linfty_case2_S_ub}, 
\[
    \int_{\hX} |u_k|_{h_E} \om_{k,\RG}^n 
    \geq 1 - \dt_k \log r - 2 \dt_k \int_\hX |f_k| \om_{k,\RG}^n
    \geq 1 - \dt_k \log r + 2 C_1 \dt_k \int_\hX \vph \om_{k,\RG}^n
    \xrightarrow[k \to +\infty]{}1.
\]
Since $\dt_k f_\vep$ converging to $0$ in $L^p(\hX, \homg)$ for any $p>0$ and $\CC^\infty_{\loc}(\hX \setminus D)$, we have
\[
    - \frac{\dt_k f_k}{r} \Id_E + \dt_k s_k
    = u_k \xrightarrow[k \to +\infty]{L^2_{\loc}(\hX \setminus D)} u_\infty. 
\]
From \eqref{eq:HE_Linfty_case2_u_bd}, we know that for all $\epsilon > 0$, there is an open neighborhood $U^\epsilon$ of $D$ such that
\begin{equation}
    \int_{U^\epsilon} |u_k|_{h_E} \om_{k,\RG}^n \leq \epsilon
\end{equation} 
Therefore, if $u_\infty \equiv 0$, taking $K = \hX \setminus U^{1/2}$, we get 
\[
    0 = \int_K |u_\infty|_{h_E} \pi^\ast \om_{\RG}^n  
    = \lim_{k \to +\infty} \int_K |u_k|_{h_E} \om_{k,\RG}^n  
    = \int_\hX |u_k|_{h_E} \om_{k,\RG}^n - \int_{U^{1/2}} |u_k|_{h_E} \om_{k,\RG}^n
    \geq 1 - 1/2 > 0 
\]
which shows a contradiction to $u_\infty \equiv 0$.

\smallskip
We now do the estimate for $\Phi(u_\infty)$. 
The term $\int_\hX \Tr_{\End}(\tr_{k,\RG} \ii \Ta(E,h_E) u_k) \om_{k,\RG}^n$ is continuous in $u_k \in L^2_1$. 
Therefore, for any $\epsilon > 0$, 
\[
    \int_\hX \iprod{\Phi(u_k) D'u_k}{D'u_k}_{h_E, \om_{k,\RG}} \om_{k,\RG}^n
    + \int_\hX \Tr_{\End}(\tr_{\pi^\ast \om_\RG} \ii \Ta(E,h_E) u_\infty) \pi^\ast \om_{\RG}^n 
    \leq \epsilon
\]
for some $k \gg 1$.
Since $u_k \to u_\infty$ in $L^2_{0,b}(K)$ for any $K \Subset \hX \setminus D$, \cite[Prop~4.1~(b)]{Simpson_1988} shows that $\Phi^{1/2}(u_k) \to \Phi^{1/2}(u_\infty)$ in $\Hom(L^2, L^q)(K)$ for any $q < 2$. 
As $D' u_k$ are uniformly bounded in $L^2$, after extracting a subsequence, one can find compact subsets $K_k \subset \hX \setminus D$ such that $K_k \subset K_{k+1}$, $\cup_k K_k = \hX \setminus D$ and 
\[
    \lt(\int_{K_k} |\Phi(u_k)^{1/2} D'u_k|_{h_E, \om_{k,\RG}}^q \om_{k,\RG}^n\rt)^{2/q} 
    + \int_\hX \Tr_{\End}(\tr_{\pi^\ast \om_\RG} \ii \Ta(E,h_E) u_\infty) \pi^\ast \om_{\RG}^n 
    \leq 2 \epsilon.
\]
We also have $\Phi(u_k)^{1/2} D'u_k \rightharpoonup \Phi(u_\infty)^{1/2} D'u_\infty$ weakly in $L^q(K)$ for any $K \subset \hX \setminus D$ compact.
By the lower semi-continuity of the norm with respect to weak convergence and enlarging $K$, 
\[
    \lt(\int_\hX |\Phi(u_\infty)^{1/2} D'u_\infty|_{h_E, \pi^\ast \om_\RG}^q \pi^\ast \om_{\RG}^n\rt)^{2/q} 
    + \int_\hX \Tr_{\End}(\tr_{\pi^\ast \om_\RG} \ii \Ta(E,h_E) u_\infty) \pi^\ast \om_{\RG}^n 
    \leq 2 \epsilon.
\]
The above estimate holds for all $q < 2$; hence one obtains the desired estimate in Lemma~\ref{lem:simpson_1} by taking $q \to 2$.
\end{proof}

Then applying Uhlenbeck and Yau's technique, one can construct a destabilizing subsheaf. 
We extract the following two lemmas 
from \cite[p.~886-889]{Simpson_1988} where the K\"ahler condition does not affect the argument.

\begin{lem}[{\cite[Lem.~5.5,~5.6]{Simpson_1988}}]\label{lem:simpson_2}
The following properties hold: 
\begin{itemize}
    \item 
    The eigenvalues of $u_\infty$ are constant, meaning there exist constants $\ld_1, \cdots \ld_r$ which are eigenvalues of $u_\infty(x)$ for almost every $x \in X$. 
    These eigenvalues $(\ld_\af)_\af$ are not all equal.
    \item 
    If $\Phi: \BR \times \BR \to \BR$ satisfies $\Phi(\ld_\af, \ld_\bt) = 0$ wherever $\ld_\af > \ld_\bt$, then $\Phi(u_\infty) D' u_\infty = 0$.
\end{itemize}
\end{lem}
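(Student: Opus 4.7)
The plan is to follow Simpson's spectral test-function method \cite[Lem.~5.5--5.6]{Simpson_1988}, which transfers essentially verbatim from the K\"ahler to our hermitian setting since the only analytic input is the master inequality of Lemma~\ref{lem:simpson_1}(ii):
\[
\int_{\hX} \iprod{\Phi(u_\infty)\,D'u_\infty}{D'u_\infty}_{h_E,\pi^*\om_\RG}\,\pi^*\om_\RG^n \leq C(u_\infty),
\qquad C(u_\infty):=-\int_{\hX}\Tr_{\End}\bigl(\tr_{\pi^*\om_\RG}\ii\Ta(E,h_E)\circ u_\infty\bigr)\pi^*\om_\RG^n,
\]
which is uniform in admissible $\Phi$. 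A strictly positive admissible seed $\Psi_0(a,b):=e^{b-a}$ (for which $\Psi_0(a,b)\leq 1/(a-b)$ on $\{a>b\}$) lets us extend the bound to any non-negative $\Phi$ obeying the admissibility condition up to a multiplicative constant by regularizing $\Phi\mapsto \Phi+\varepsilon\Psi_0$ and letting $\varepsilon\to 0$.

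For assertion (ii), the hypothesis $\Phi\equiv 0$ on $\{a>b\}$ means that the rescaling $\Phi\mapsto t\Phi$ preserves admissibility for every $t>0$; letting $\varepsilon\to 0$ and then $t\to+\infty$ forces
\[
\int_{\hX} \iprod{\Phi(u_\infty)\,D'u_\infty}{D'u_\infty}_{h_E,\pi^*\om_\RG}\,\pi^*\om_\RG^n \leq 0.
\]
Since $\Phi(u_\infty)$ is pointwise positive-semidefinite in the spectral basis, the integrand is pointwise non-negative and thus vanishes a.e., giving $\Phi(u_\infty)\,D'u_\infty=0$.

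For assertion (i), apply the master inequality to the difference quotient $\Phi_\phi(a,b):=(\phi(a)-\phi(b))/(a-b)$ associated to a smooth increasing $\phi:\BR\to[0,1]$; admissibility holds because $\phi(a)-\phi(b)\leq 1$, and the functional-calculus identity $\Phi_\phi(u_\infty)\,D'u_\infty=D'(\phi(u_\infty))$ converts the master bound into
\[
\int_{\hX}|D'(\phi(u_\infty))|^2_{h_E,\pi^*\om_\RG}\,\pi^*\om_\RG^n \leq C(u_\infty),
\]
uniformly in $\phi$. Approximating the Heaviside $\mathbf{1}_{[c,+\infty)}$ by such $\phi$'s shows that each spectral projection $\pi_c:=\mathbf{1}_{[c,+\infty)}(u_\infty)$ belongs to $L^2_1$. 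Combined with assertion (ii) applied to a $\Phi$ encoding the cutoff across the level $c$, $\pi_c$ satisfies the weak integrability condition required by the Uhlenbeck--Yau regularity theorem \cite[Sec.~7]{Uhlenbeck_Yau_1986} (cf.\ \cite{Popovici_2005}), and therefore defines a coherent subsheaf $\CF_c\subset E_{|\hX\setminus D}$ whose sheaf-theoretic rank equals the a.e.\ pointwise rank $\#\{\af:\ld_\af(x)\geq c\}$ of $\pi_c(x)$. Letting $c$ vary over a dense subset of $\BR$ forces the spectral distribution function of $u_\infty(x)$ to be $x$-independent a.e., yielding constant eigenvalues $\ld_1,\ldots,\ld_r$. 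Non-triviality of $u_\infty$ together with $\Tr u_\infty=0$ (inherited from $\Tr s_{\vep_k}=0$ and the normalization $\int f_{\vep_k}\om_{k,\RG}^n=0$) then guarantees that not all $\ld_\af$ coincide.

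The principal obstacle is the regularity step in (i): upgrading an $L^2_1$ orthogonal projection to a coherent subsheaf whose sheaf-theoretic rank matches the pointwise spectral rank is the Uhlenbeck--Yau regularity theorem, the technical heart of Simpson's argument; fortunately this is a purely local statement, so the non-closedness of $\om_\RG^{n-1}$ in our hermitian setting plays no role.
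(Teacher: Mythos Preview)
Your sketch correctly reproduces Simpson's argument \cite[Lem.~5.5--5.6]{Simpson_1988}, which is exactly the paper's approach: the paper offers no independent proof but simply cites Simpson, observing that the K\"ahler assumption plays no role. One small imprecision: the pointwise vanishing $\Tr_{\End} u_\infty=0$ comes from $\Tr_{\End} u_k=-\dt_k f_{\vep_k}\to 0$ in $L^p_{\loc}$ (since $\dt_k\to 0$ and $f_{\vep_k}$ is uniformly controlled by $-C\vph$), rather than from the integral normalization $\int f_{\vep_k}\om_{k,\RG}^n=0$ you invoke.
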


Without loss of generality, one may assume that $\ld_1 \leq \ld_2 \leq \cdots \leq \ld_r$. 
Let $\gm_1 < \gm_2 < \cdots < \gm_l$ be the distinct eigenvalues of $u_\infty$, where $2 \leq l \leq r$.
For each $1 \leq i \leq l$, choose a smooth function $p_{\gm_i}$ satisfying
\[
    p_{\gm_i}(x) = \begin{cases}
        1, & x \leq \gm_i\\
        0, &x \geq \gm_{i+1}.
    \end{cases}
\]
Set $\pi_i := p_{\gm_i}(u_\infty)$. 
Then $\pi_i$ sends $e_\af$ to $e_\af$ with $\ld_\af \leq \gm_i$ and $e_\bt$ to zero if $\ld_\bt > \gm_i$. 

\begin{lem}[{\cite[Lem.~5.7]{Simpson_1988}}]\label{lem:simpson_3}
For each $i \in \{1,\cdots,l-1\}$, $\pi_i$ induces a proper subsheaf $\CF_i$ of $E$, and 
\[
    \frac{\deg_{\pi^\ast \om_\RG} (\CF_i)}{\rk(\CF_i)} 
    \geq \frac{\deg_{\pi^\ast \om_\RG}(E)}{\rk(E)}
\]
for some $i \in \{1, \cdots, l-1\}$.
\end{lem}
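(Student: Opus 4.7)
The strategy is the standard Uhlenbeck--Yau construction of a destabilizing subsheaf from a weakly holomorphic projection, adapted to the Gauduchon/non-K\"ahler setting via the gradient identity in Lemma~\ref{lem:simpson_1}.

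\emph{Step 1: From $\pi_i$ to coherent subsheaves.} For each fixed $i \in \{1,\dots,l-1\}$, the endomorphism $\pi_i = p_{\gamma_i}(u_\infty)$ lies in $L^2_1$ since $u_\infty$ does and $p_{\gamma_i}$ is smooth. The choice of $p_{\gamma_i}$ and the constancy of the eigenvalues $\gamma_1<\dots<\gamma_l$ of $u_\infty$ force $\pi_i^2=\pi_i$ and $\pi_i^* = \pi_i$ (for the $h_E$-adjoint) almost everywhere. To obtain the key Uhlenbeck--Yau identity $(\mathrm{Id}_E-\pi_i)\db \pi_i = 0$ in $L^2$, I would apply the second assertion of Lemma~\ref{lem:simpson_2} with a family of smooth test functions $\Phi_\delta(a,b)$ which vanish whenever $a>b$ and which in the limit $\delta\to 0$ localize on pairs of eigenvalues $(\lambda_\alpha,\lambda_\beta)$ with $\lambda_\alpha \leq \gamma_i < \lambda_\beta$; these are exactly the components picked out by $(\mathrm{Id}_E-\pi_i)\db \pi_i$. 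Then \cite{Uhlenbeck_Yau_1986}, in the form generalized by \cite{Popovici_2005} (already invoked in the proof of Theorem~\ref{thm:openness_stability}), guarantees that $\pi_i$ is the $L^2_1$-projection onto a coherent subsheaf $\CF_i \subset E$ of rank $r_i := \rk(\CF_i) \in \{1,\dots,r-1\}$; properness follows because $l \geq 2$ and $\pi_i$ is neither $0$ nor $\mathrm{Id}_E$.

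\emph{Step 2: Spectral decomposition and the trace normalization.} Since the eigenvalues of $u_\infty$ are constant, Abel summation gives the pointwise identity
\[
    u_\infty = \gamma_l\,\mathrm{Id}_E - \sum_{i=1}^{l-1} (\gamma_{i+1}-\gamma_i)\,\pi_i.
\]
Taking traces and integrating against $\pi^\ast\om_\RG^n$, and using that $\int_\hX \Tr_{\End}(u_\infty)\,\pi^\ast\om_\RG^n = 0$ (inherited as the $L^2$-limit of $\int_\hX \Tr_{\End}(u_k)\,\om_{k,\RG}^n = -\delta_k\int f_k\,\om_{k,\RG}^n = 0$), I would obtain the balancing relation
\[
    \gamma_l\, r = \sum_{i=1}^{l-1} (\gamma_{i+1}-\gamma_i)\, r_i.
\]

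\emph{Step 3: The degree inequality via the gradient identity.} I would plug the spectral decomposition of Step~2 into the inequality in Lemma~\ref{lem:simpson_1}(ii). For the curvature term this is linear and reproduces
\[
    \int_\hX \Tr_{\End}\!\big(\tr_{\pi^\ast\om_\RG} \ii\Ta(E,h_E) \circ u_\infty\big)\pi^\ast\om_\RG^n = 2\pi n\Big(\gamma_l \deg_{\pi^\ast\om_\RG}(E) - \sum_i (\gamma_{i+1}-\gamma_i) \int_\hX \Tr_{\End}\!\big(\tr_{\pi^\ast\om_\RG} \ii\Ta(E,h_E)\,\pi_i\big)\pi^\ast\om_\RG^n\Big).
\]
For the gradient term, the main point is to choose $\Phi$ so that
\[
    \langle \Phi(u_\infty) D'u_\infty, D'u_\infty\rangle_{h_E,\pi^\ast\om_\RG} \geq \sum_{i=1}^{l-1} (\gamma_{i+1}-\gamma_i)\, |D'\pi_i|^2_{h_E,\pi^\ast\om_\RG}.
\]
In a local frame diagonalizing $u_\infty$, the off-diagonal component of $D'u_\infty$ on a pair with $\lambda_\alpha\neq\lambda_\beta$ is $(\lambda_\alpha-\lambda_\beta)A^\alpha_\beta$, while that of $D'\pi_i$ is $(p_{\gamma_i}(\lambda_\alpha)-p_{\gamma_i}(\lambda_\beta))A^\alpha_\beta$. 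The telescoping identity $\sum_i (\gamma_{i+1}-\gamma_i)(p_{\gamma_i}(\lambda_\alpha)-p_{\gamma_i}(\lambda_\beta))^2 = |\lambda_\alpha-\lambda_\beta|$ reduces the inequality to the pointwise bound $\Phi(a,b)\,(a-b)^2 \geq |a-b|$, which is compatible with the admissibility condition $\Phi(a,b)\leq (a-b)^{-1}$ for $a>b$; it suffices to take smooth approximations of $(a,b)\mapsto 1/|a-b|$ (cut off on the diagonal) and pass to the limit. Finally, the Gauss--Codazzi formula~\eqref{eq:open_GC_eqn}, together with the expression~\eqref{eq_deg_subsheaf} for the degree of a subsheaf in terms of curvature, identifies
\[
    \int_\hX \Tr_{\End}\!\big(\tr_{\pi^\ast\om_\RG}\ii\Ta(E,h_E)\,\pi_i\big)\pi^\ast\om_\RG^n - \int_\hX |D'\pi_i|^2_{h_E,\pi^\ast\om_\RG}\,\pi^\ast\om_\RG^n = 2\pi n\,\deg_{\pi^\ast\om_\RG}(\CF_i).
\]
Combining these three identities inside Lemma~\ref{lem:simpson_1}(ii) and dividing by $\sum_i (\gamma_{i+1}-\gamma_i) r_i > 0$ yields
\[
    \frac{\sum_i (\gamma_{i+1}-\gamma_i)\, r_i \cdot \mu_{\pi^\ast\om_\RG}(\CF_i)}{\sum_i (\gamma_{i+1}-\gamma_i)\, r_i} \geq \frac{\deg_{\pi^\ast\om_\RG}(E)}{r} = \mu_{\pi^\ast\om_\RG}(E),
\]
so at least one $\CF_i$ satisfies $\mu_{\pi^\ast\om_\RG}(\CF_i)\geq \mu_{\pi^\ast\om_\RG}(E)$, as claimed.

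\emph{Main obstacle.} The analytically delicate point is Step~1: verifying $(\mathrm{Id}_E-\pi_i)\db \pi_i = 0$ as an $L^2$-identity on the resolution $\hX$ (rather than just on $\hX\setminus D$), so that the Uhlenbeck--Yau regularity theorem can be invoked to extend $\CF_i$ across the exceptional divisor. This requires handling the smooth approximations of the characteristic function $p_{\gamma_i}$ so that Lemma~\ref{lem:simpson_2} applies uniformly, which is where the bounded-Gauduchon framework and the uniform estimates of Theorem~\ref{bigthm:approx_and_conv} enter in an essential way.
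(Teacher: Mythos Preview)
Your proposal is correct and follows precisely Simpson's original argument \cite[Lem.~5.7]{Simpson_1988}, which is exactly what the paper cites without reproducing; the paper explicitly remarks that the K\"ahler condition plays no role here. One small comment: the ``main obstacle'' you flag is milder than you suggest, since Lemma~\ref{lem:simpson_1}(i) already asserts $u_\infty \in L^2_1(\hX,\pi^\ast\om_\RG)$ globally (not merely on $\hX\setminus D$), so $\pi_i = p_{\gm_i}(u_\infty)$ is an $L^2_1$ weakly holomorphic projection on all of $\hX$ and the Uhlenbeck--Yau/Popovici regularity applies directly to produce $\CF_i \subset E$ on $\hX$, just as in the proof of Theorem~\ref{thm:openness_stability}.
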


Lemma~\ref{lem:simpson_3} yields a contradiction to the stability of $E$, and this completes the proof of Claim~\ref{claim:int_bdd_logTrS}.

\bibliographystyle{smfalpha_new}
\bibliography{biblio}
\end{document}